\documentclass[11pt]{article}

\usepackage[margin=1.05in]{geometry}
\usepackage{amsmath,amssymb,amsthm,mathtools}
\usepackage{microtype}
\usepackage{verbatim}
\usepackage[
  colorlinks=true,
  linkcolor=blue,
  citecolor=blue,
  urlcolor=blue,
  hypertexnames=false,
  pdftitle={Error Term Estimates for Sets of Remainders},
  pdfauthor={Omkar Baraskar, Prashant Gokhale, Sarvagya Jain, Adam Kie\.zun}
]{hyperref}

\newtheorem{theorem}{Theorem}[section]
\newtheorem{lemma}[theorem]{Lemma}
\newtheorem{proposition}[theorem]{Proposition}

\theoremstyle{remark}
\newtheorem{remark}[theorem]{Remark}
\newcommand{\seqnum}[1]{\href{https://oeis.org/#1}{\rm \underline{#1}}}
\newcommand{\C}{\mathbb C}

\newcommand{\1}{\mathbf 1}
\newcommand{\dd}{\,\mathrm d}
\newcommand{\e}{\mathrm e}
\newcommand{\Log}{\operatorname{Log}}
\newcommand{\Ein}{\operatorname{Ein}}

\title{Error Term Estimates for Sets of Remainders}
\author{%
  \begin{tabular}{@{}c@{}}
    \begin{tabular}{@{}c@{\hspace{0.12\textwidth}}c@{}}
      \begin{minipage}[t]{0.40\textwidth}
        \centering
        \textbf{Omkar Baraskar} \\
        {\small Cheriton School of Computer Science\\ University of Waterloo \\ \textit{ON N2L 3G1, Canada} \\
        \texttt{obaraska26@gmail.com}}
      \end{minipage}
      &
      \begin{minipage}[t]{0.40\textwidth}
        \centering
        \textbf{Prashant Gokhale} \\
        {\small University of Wisconsin-Madison \\
        \texttt{prashant.gokhale@wisc.edu}}
      \end{minipage}
    \end{tabular}
    \\[2cm]
    \begin{tabular}{@{}c@{\hspace{0.12\textwidth}}c@{}}
      \begin{minipage}[t]{0.40\textwidth}
        \centering
        \textbf{Sarvagya Jain} \\
        {\small Department of Mathematics and Statistics \\
        University of Turku \\
        \textit{20014 Turku, Finland} \\
        \texttt{sajain@utu.fi}}
      \end{minipage}
      &
      \begin{minipage}[t]{0.40\textwidth}
        \centering
        \textbf{Adam Kie\.zun} \\
        {\small Meta Superintelligence Labs} \\
        \texttt{akiezun@meta.com}
      \end{minipage}
    \end{tabular}
  \end{tabular}
}
\date{}

\begin{document}
\maketitle

\begin{abstract}
For a positive integer \(n\), let
\[
  S_1(n)=\{n\bmod k:1\leq k\leq\lfloor n/2\rfloor\},
  \qquad s_1(n)=|S_1(n)|.
\]
Baraskar and Vukusic proved
\(s_1(n)=\Xi n+O(n/(\log n\log\log n))\) for an explicit positive constant
\(\Xi\), and asked for a sharper error term.  Their computations suggested
that an \(O(n^{1/3})\) bound might hold.  We prove that, for every \(\alpha>\sqrt2\),
\[
  \limsup_{n\to\infty}
  \frac{s_1(n)-\Xi n}{n\exp\{-\alpha\sqrt{\log n\log\log n}\}}
  =\infty,
\]
and the corresponding limit inferior is \(-\infty\).  In particular, no
fixed power saving is possible.  We also prove the complementary upper
estimate
\[
  s_1(n)=\Xi n+O\left(
    n\exp\{-\alpha\sqrt{\log n\log\log n}\}
  \right)
\]
for every fixed \(\alpha\in (0,\sqrt{2})\), sharpening the previous
\(O(n/(\log n\log\log n))\) bound.  Thus, the constant \(\sqrt2\) is the threshold for bounds of
the form \(n\exp\{-\alpha\sqrt{\log n\log\log n}\}\).
\end{abstract}

\section{Introduction}
Let \(n\) be a positive integer. Baraskar and Vukusic in~\cite{BaraskarVukusic2026} studied the
remainder set
\[
S_1(n):=\{n\bmod k:1\leq k\leq \lfloor n/2\rfloor\}
\]
and denoted its cardinality by \(s_1(n)\). This sequence is listed as \seqnum{A283190} in the OEIS (On-Line Encyclopedia of Integer Sequences) \cite{oeis}.  For a prime \(p\), put
\begin{equation*}\label{eq:density-constant}
  V(p):=\prod_{\substack{q<p\\q\ \mathrm{prime}}}
    \left(1-\frac1q\right),
  \qquad
  \Xi:=\sum_{p\ \mathrm{prime}}\frac{V(p)}{p(p+1)}.
\end{equation*}
In~\cite[Theorem 1]{BaraskarVukusic2026}, Baraskar and Vukusic proved the asymptotic formula
\[
  s_1(n)=\Xi n+O\!\left(\frac{n}{\log n\log\log n}\right)
\]
and asked whether the error estimate could be improved
(see \cite[Problem~1]{BaraskarVukusic2026}).  Their
computations through \(10^7\) suggested that an error of order \(O(n^{1/3})\)
might be closer to the truth.  Our first result shows that one cannot have a power-saving error term asymptotically.

For \(\alpha>0\) and \(n\geq3\), put
\begin{equation*}\label{eq:R-alpha-def}
  R_\alpha(n)
  :=n\exp\{-\alpha\sqrt{\log n\log\log n}\}.
\end{equation*}

\begin{theorem}\label{thm:main}
For every real \(\alpha>\sqrt2\),
\begin{equation*}\label{eq:signed-omega-main}
  \limsup_{n\to\infty}\frac{s_1(n)-\Xi n}{R_\alpha(n)}=\infty,
  \qquad
  \liminf_{n\to\infty}\frac{s_1(n)-\Xi n}{R_\alpha(n)}=-\infty.
\end{equation*}
\end{theorem}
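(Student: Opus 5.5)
The plan is to reduce $s_1(n)-\Xi n$ to a weighted sum of sieve errors for rough numbers, and then exploit the known sign oscillation of the Buchstab function $\omega(u)$ around $e^{-\gamma}$.

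\textbf{Reduction to rough numbers.} Write $r=n-m$ with $n/2\le m\le n$. Then $r\in S_1(n)$ iff $m$ has a divisor $d$ with $r<d\le n/2$. Equivalently, $m=dj$ with $j\ge2$ and $j<m/(n-m)$. Hence $r\notin S_1(n)$ iff the least prime factor $p=P^-(m)$ satisfies $p\ge m/(n-m)$. Up to $O(1)$ boundary terms, this says $m\in(np/(p+1),n]$.

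Writing $m=pk$, the number of missing remainders is
\[
\sum_p \#\{k\in(n/(p+1),n/p] : P^-(k)\ge p\},
\]
together with the contribution of the small values of $r$ that are always present. Since $\Xi n=\sum_p V(p)\,n/(p(p+1))$ and $V(p)\sim e^{-\gamma}/\log p$, I get
\[
s_1(n)-\Xi n=-\sum_p E_p(n)+O(\cdot),\qquad
E_p(n)=\Phi\bigl(\tfrac np,p\bigr)-\Phi\bigl(\tfrac n{p+1},p\bigr)-\frac{V(p)\,n}{p(p+1)}.
\]
Here $\Phi(x,y)$ counts integers $\le x$ free of prime factors $<y$.

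\textbf{Asymptotics for $E_p(n)$.} I will use de Bruijn's asymptotic
\[
\Phi(x,y)=\frac{x\,\omega(u)-y}{\log y}+O\!\left(x\rho(u)e^{-(\log y)^{3/5}}\right)+\dots,\qquad u=\frac{\log x}{\log y},
\]
or its Hildebrand–Tenenbaum version with an error relative to $\rho(u)$. Differentiating in $x$ over a short interval of length $n/p^2$ gives
\[
E_p(n)=\frac{n}{p^2\log p}\bigl(\omega(u_p)+\omega'(u_p)/\log p-e^{-\gamma}\bigr)+\text{error},\qquad u_p=\frac{\log(n/p)}{\log p}.
\]

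\textbf{Lower bound from oscillation of $\omega$.} I will invoke the classical oscillation results for $\omega$ (Hildebrand; also Cheer–Goldston). These say that $\omega(u)-e^{-\gamma}$ changes sign in every interval of length $1$. Moreover, on suitable subintervals of fixed positive length, it has a prescribed sign and absolute value $\ge\rho(u)e^{-O(u)}=u^{-u(1+o(1))}$.

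Fix a target $y=\exp\{t\}$ with
\[
t\approx\sqrt{\tfrac12\log n\log\log n}.
\]
As $p$ runs over $[y,2y]$, the value $u_p$ moves only by $O(u/\log y)=o(1)$. I therefore choose $n$ (along a sequence) so that $u_p$ lies in a positive-sign window for all such $p$, and along another sequence in a negative-sign window. Summing over $p\in[y,2y]$ then gives a contribution of one sign of size
\[
\gg \frac{n}{y\log^2y}\,u^{-u(1+o(1))}.
\]
With $u=\log n/t$, the exponent is $-t-u\log u(1+o(1))=-\bigl(t+\tfrac{\log n\log\log n}{2t}\bigr)(1+o(1))$. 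This is minimized at my choice of $t$, where it equals $-\sqrt{2\log n\log\log n}\,(1+o(1))$. That exceeds $R_\alpha(n)$ by an unbounded factor for any $\alpha>\sqrt2$.

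\textbf{Main obstacle.} The main obstacle is controlling everything else: the contributions of primes $p$ outside the dyadic window, and all error terms, must be $o$ of this main contribution. The other ranges have different $u_p$ and hence uncontrolled signs, so I cannot rely on cancellation from them. Instead I must show each is genuinely smaller.

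My first attempt is to bound
\[
|E_p|\ll \frac{n}{p^2\log p}\,\rho(u_p)\,(\log u_p)^{O(1)},
\]
using the $\rho$-relative error in the rough-number asymptotics, which is valid in the range $\log p\gg(\log\log n)^{2+\epsilon}$. Summing this bound over $p$ away from $[y,2y]$ gives a sum dominated by the optimum. To get a strict gain I would take the window $[y,y^{1+\delta}]$ near the optimizing $t$ rather than just $[y,2y]$, or compare against the exponent at $t(1\pm\delta)$; the $\log n\log\log n$ saddle is flat only to second order, which should make this work.

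Small primes $p<\exp((\log\log n)^3)$ need the fundamental lemma of sieve theory to show $E_p$ is tiny there, since $u_p$ is huge. Large primes $p>\sqrt n$ are handled by the prime number theorem with classical error term.
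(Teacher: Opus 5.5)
\textbf{The local asymptotic for $E_p(n)$ is far too crude.} At the optimal scale $\log p\approx t_0:=\sqrt{\tfrac12\log n\log\log n}$ one has $u_p\log u_p=(1+o(1))\log p$. So the oscillating term you want to detect is $\rho(u_p)=p^{-1+o(1)}$ \emph{relative to} the main term $V(p)H_p$, and de Bruijn's formula cannot resolve a signal that small.

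First, its main term uses $e^{-\gamma}/\log p$ in place of $V(p)$. Unconditionally, $V(p)\log p-e^{-\gamma}$ is only known to be $\ll\exp\{-c(\log p)^{3/5-\varepsilon}\}$, and even under RH only $O(p^{-1/2+\varepsilon})$. Either bound swamps $p^{-1+o(1)}$.

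Second, an error term for $\Phi(x,p)$ does not shrink when you difference over $(n/(p+1),n/p]$, an interval of relative length $1/(p+1)$. Grant your error $O(x\rho(u)e^{-(\log p)^{3/5}})$. The differenced error is still of that size, while the differenced oscillating term is $\asymp x\rho(u)/(p\log p)$, so the error wins by a factor $p^{1-o(1)}$. ``Differentiating in $x$'' is therefore not legitimate.

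What you would actually need is a short-interval, exact-Euler-product analogue of the Friedlander--Granville--Hildebrand--Maier saddle-point asymptotic for $p$-rough numbers in $(x-x/p,x]$, with relative error $o(\rho(u))$ uniformly in $p$. Nothing in the proposal supplies this, and it is not a standard input. (Separately, the set you count in the reduction is the set of remainders that \emph{do} occur, so $s_1(n)-\Xi n=+\sum_pE_p(n)$; this is harmless, since both signs are claimed.)

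\textbf{A single window cannot be isolated.} Write $g(t)=t+u\log u$ with $u=\log n/t$. Near $t_0$ one has $g(t)-g(t_0)\approx(t-t_0)^2/t_0$, so every prime with $|\log p-t_0|\lesssim\sqrt{t_0}\asymp(\log n\log\log n)^{1/4}$ has a comparable envelope. Meanwhile $u_p$ moves by $1$, and $\omega(u_p)-e^{-\gamma}$ changes sign, each time $\log p$ moves by about $t_0^2/\log n\asymp\log\log n$.

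So there are $\gg(\log n)^{1/4}(\log\log n)^{-3/4}$ sign-windows of the same size as yours. Their signs are determined by $n$ but not controlled by you, and the sum of their absolute values exceeds your window by an unbounded factor. Hence ``each is genuinely smaller'' is false.

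Your proposed fixes do not help:
\begin{itemize}
\item On $[y,y^{1+\delta}]$, $u_p$ runs through $\asymp\delta u_p\to\infty$ sign changes.
\item Comparing exponents at $t_0(1\pm\delta)$ only disposes of $|\log p-t_0|\ge\delta t_0$.
\end{itemize}
The quadratic behaviour of $g$ gives decay only beyond distance $\sqrt{t_0}$, which is far wider than a sign-window. The flatness is the obstruction, not the cure.

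Heuristically, treat the sum over $\log p$ as an integral with phase $\approx\pi u_p$ and pass to its complex saddle. The full sum then comes out smaller than one window by a factor of order $\exp\{-c\sqrt{\log n}\,(\log\log n)^{-3/2}\}$. This loss is $\e^{o(\sqrt{\log n\log\log n})}$, so it does not threaten the theorem. It does show that the windows genuinely cancel and would have to be evaluated jointly, to high precision.

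The paper avoids both problems by never evaluating $E(n)$ pointwise. A one-sided bound on $E$ enters only through the averaged identity $D_k=\int_0^\infty h(t)\mathcal K_k(t)\dd t$, whose kernel has eventual sign $(-1)^{k-1}$. On the other side, $D_k=\sum_pT_p(1)\mathcal L_p^kG_p^{(k)}(0)$ has positive weights concentrated near $\mathcal L_p\approx k$. There the modest accuracy $G_p/f=1+O(\log\mathcal L_p/\mathcal L_p)$ from the prime number theorem suffices, because all the oscillation comes from the conjugate saddle points of the single entire function $f$.
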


In particular, for every \(\alpha>\sqrt2\),
\begin{equation*}\label{eq:no-fixed-power}
  s_1(n)-\Xi n\ne O\!\left(R_\alpha(n)\right).
\end{equation*}

We also give a corresponding improvement to the error term of Baraskar and
Vukusic.  The proof, given in Section~\ref{sec:improved-error}, is an
application of the fundamental lemma of sieve theory.

\begin{theorem}\label{thm:improved-error}
For every fixed \(\alpha\in (0,\sqrt2)\),
\[
  s_1(n)=\Xi n+O\left(
    R_\alpha(n)
  \right).
\]
\end{theorem}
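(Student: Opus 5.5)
The approach is to reduce $s_1(n)$ to a sum over primes of sifted counts in short intervals, and then treat small primes with the fundamental lemma of sieve theory and large primes with a trivial bound.

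\textbf{Step 1: reduction.} Write $r=n-m$. A residue $r$ lies in $S_1(n)$ exactly when some $k\le n/2$ with $k>r$ divides $m$. Put $j=m/k\ge 2$. The condition $k>n-m$ becomes $m>jn/(j+1)$. The function $j\mapsto jn/(j+1)$ is increasing, so the most favourable choice of $j$ is the least prime factor $p=P^-(m)$. Hence, up to $O(1)$ boundary terms,
\[
s_1(n)=\sum_{p}\#\Bigl\{u\in\Bigl(\tfrac{n}{p+1},\tfrac{n}{p}\Bigr] : P^-(u)\ge p\Bigr\}+O(1).
\]
The $p$-th term is a sieve problem: sift an interval of length $L_p=n/(p(p+1))$ by the primes $<p$. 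Its expected main term is $L_pV(p)$, and summing these main terms over $p$ gives exactly $\Xi n$. If $p>\sqrt n$, the interval is nonempty only when $p\ge n-1$, so those terms contribute $O(1)$.

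\textbf{Step 2: small primes.} Set $L=\log n$ and $y=\exp\{\beta\sqrt{L\log L}\}$, with $\beta$ to be chosen. For $p\le y$, apply the fundamental lemma (sieve dimension $1$) with level
\[
D_p=\frac{n}{p^2}\exp\{-3\sqrt{L\log L}\}.
\]
The error for prime $p$ is then
\[
\ll L_pV(p)\,e^{-s_p\log s_p}+D_p,
\qquad s_p=\frac{\log D_p}{\log p}.
\]
Summed over $p\le y$, the remainder terms $D_p$ total at most $n\exp\{-3\sqrt{L\log L}\}$, which is harmless. Write $p=e^t$ with $t\le \beta\sqrt{L\log L}$. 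Then $s_p=(1-o(1))L/t$, which tends to infinity, and
\[
s_p\log s_p\ \ge\ (1-o(1))\frac{L}{t}\log\frac{L}{t}\ \ge\ (1-o(1))\frac{L\log L}{2t}.
\]
Each term is therefore at most $n\exp\{-t-(1-o(1))L\log L/(2t)\}$. The exponent $t+L\log L/(2t)$ is minimised at $t=\sqrt{L\log L/2}$, where it equals $\sqrt2\sqrt{L\log L}$, and it is larger for every other $t$. Since there are at most $\pi(y)=n^{o(1)}$ primes here, the total error from $p\le y$ is $\ll R_{\alpha}(n)$ for every $\alpha<\sqrt2$.

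\textbf{Step 3: large primes.} For $y<p\le\sqrt n$, bound each sifted count trivially by $L_p+1$, and also bound the main term $L_pV(p)$ by $L_p$. This gives a total contribution $\ll n/y+\sqrt n$. Choosing $\beta\ge\sqrt2$ (for instance $\beta=\sqrt2$) makes $n/y\ll R_\alpha(n)$ for every $\alpha<\sqrt2$. Combined with Step 2, this proves the theorem.

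\textbf{Main obstacle.} The delicate point is Step 2. The level $D_p$ has to be taken essentially as large as the interval length $n/p^2$ for every $p$ up to $y$, and $D_p=n^{1-o(1)}/p^2$ is exactly that. Any fixed-power loss in the level, such as $D=n^{1/2}$, halves $s_p$ and degrades the constant from $\sqrt2$ to $1$. So I would check carefully that the remainder term in the fundamental lemma is genuinely $O(D)$ in this interval setting, and that the $e^{-s\log s}$ form holds uniformly with $s_p\to\infty$. I would do this using the Iwaniec/Friedlander form of the fundamental lemma, which states the error as $L_pV(p)\,e^{-s\log s}$ plus a sum of $O(1)$ remainders over squarefree $d<D$.
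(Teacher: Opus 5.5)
Your strategy is sound, and it differs from the paper's in a useful way. However, the last step of Step~2 fails as written.

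\textbf{The gap.} After bounding the \(p\)-th error by \(n\exp\{-t-(1-o(1))L\log L/(2t)\}\), you multiply by \(\pi(y)=n^{o(1)}\). A factor \(n^{o(1)}\) is not negligible here, because the saving you are proving is itself only \(n^{-o(1)}\). Concretely, \(\pi(y)=\exp\{(\beta-o(1))\sqrt{L\log L}\}\), so your argument yields only \(n\exp\{(\beta-\sqrt2+o(1))\sqrt{L\log L}\}\). For \(\beta=\sqrt2\) this is no saving at all. Since Step~3 forces \(\beta\ge\alpha\), the argument as written covers at best \(\alpha<1/\sqrt2\).

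\textbf{The repair.} You discarded a factor \(1/p\). Since \(L_p\le n/p^2=n\e^{-2t}\), the \(p\)-th error is at most
\[
  \frac1p\cdot n\exp\Bigl\{-t-(1-o(1))\frac{L\log L}{2t}\Bigr\}
  \le\frac1p\cdot n\exp\bigl\{-(\sqrt2-o(1))\sqrt{L\log L}\bigr\}.
\]
Then \(\sum_{p\le y}1/p\ll\log\log n\) is absorbed into the \(o(1)\). The paper does the equivalent bookkeeping by grouping primes into blocks \(\e^j\le p<\e^{j+1}\). It uses \(\sum_{\e^j\le p<\e^{j+1}}p^{-2}\ll\e^{-j}\) and the fact that there are only \(O(\sqrt{L\log L})\) blocks. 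One power of \(p\) pays for the roughly \(\e^t/t\) primes at scale \(t\), and only the other enters the optimisation. That is where the constant \(\sqrt2\) (rather than \(2\)) comes from.

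\textbf{Comparison with the paper.} With this fix your proof is correct, and it differs from the paper's in the choice of sieve level. The paper applies Lemma~\ref{lem:interval-fundamental} with \(z^u=H_p\), the full interval length. The remainder \(\Psi_{\mu^2}(H_p,p)\) is then not trivially small compared with \(H_p\), so it is bounded by Rankin's method (Lemma~\ref{lem:squarefree-smooth-bound}). That lemma requires \(u_p<p\), which is why the paper also handles \(p\le2\log n\) separately by inclusion--exclusion.

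Your level \(D_p=(n/p^2)\exp\{-3\sqrt{L\log L}\}\) lowers \(s_p\) only by a factor \(1-o(1)\), since \(\sqrt{L\log L}=o(L)\). Meanwhile the trivial bound \(\Psi_{\mu^2}(D_p,p)\le D_p\) already sums to \(O(n\exp\{-3\sqrt{L\log L}\})\). This removes the need for the Rankin estimate and for the separate small-prime range, because the fundamental lemma holds uniformly in \(z\ge2\) once \(s_p\to\infty\). For the constant \(\sqrt2\), the two choices of level are equivalent.
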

Together, Theorems~\ref{thm:main} and~\ref{thm:improved-error} show that, the constant \(\sqrt2\) is the
threshold for error bounds of the form
\[
  n\exp\{-\alpha\sqrt{\log n\log\log n}\}.
\]
It would be interesting to study what happens at $\alpha = \sqrt{2}$.

More generally, Baraskar and Vukusic
defined the iterated remainder sets
\[
S_0(n):=\{1,2,\ldots,\lfloor n/2\rfloor\},\qquad
S_{j+1}(n):=\{n\bmod k:k\in S_j(n)\setminus\{0\}\},
\]
and wrote
\[
s_j(n):=|S_j(n)|.
\]
In~\cite[Problem 2]{BaraskarVukusic2026}, Baraskar and Vukusic conjectured that for all $j\geq 2$, 
\[\lim_{n\to\infty}\frac{s_j(n)}{n}\]
does not exist. In a companion work~\cite{BaraskarGokhaleJainKiezun2026}, the authors show that the above conjecture is true.

The sets $S_j(n)$ are related to the problem
of estimating the length of Pierce expansions for rationals, studied by Erd\H{o}s and
Shallit~\cite{ErdosShallit1991} and, more recently, by Chase and
Pandey~\cite{ChasePandey2022}. See~\cite{BaraskarGokhaleJainKiezun2026}  for a detailed discussion of the relation
between Pierce expansions and iterated remainder sets.

The oscillation phenomenon observed in Theorem~\ref{thm:main} belongs to a substantial
tradition of results showing that sieve-theoretic counting functions cannot
be approximated too uniformly by their expected main terms.  Particularly
relevant is the work of Friedlander, Granville, Hildebrand and
Maier~\cite{FGHM}, who studied, among other quantities, the sifting function
\[
   \Phi(y,z)
   :=\#\{n\leq y:p\mid n\Longrightarrow p\geq z\},
\]
which is closely related to the rough-number counts that occur naturally in
our analysis (see~\eqref{eq:Mp-rough-interval} below).  They proved quantitative oscillation theorems for such
functions by studying the associated Dirichlet series
\[
   F(z;s)=\prod_{p<z}(1-p^{-s})\zeta(s)
\]
and applying the saddle-point method to a Perron-type integral; in the relevant
range the dominant contribution comes from a pair of non-real conjugate
saddle points.  

The analysis in~\cite{FGHM} is itself closely connected with the
saddle-point treatment of integers with restricted prime factors developed
by Hildebrand and Tenenbaum~\cite{HildebrandTenenbaum}.  More recently,
Gorodetsky~\cite{GorodetskySmooth} obtained oscillation results for smooth
numbers through an approximate explicit formula in which the effects of
zeros of the Riemann zeta function and of prime powers are made explicit.

The argument used here has a different organization. We do not try to express rough-number counting function as a Perron-type integral directly, nor do we isolate an oscillatory contribution arising from a zero of the zeta function. Instead, we study an appropriate transform of the error term. A sufficiently strong hypothetical bound for the error term implies correspondingly strong bounds on the high-order derivatives of the associated transform at the boundary point \(w=1\). On the other hand, it will turn out that this transform admits an explicit prime-sum representation, giving us a second description of the same derivatives. A saddle-point analysis of this prime sum shows that some of these derivatives must be much larger than the bounds obtained under the hypothetical strong bound on the error term. Thus, whereas in~\cite{FGHM} the saddle point is used directly to exhibit oscillation in a sifting function, here it is used to force rapid growth of the boundary derivatives, and the oscillation of the original error term follows from the incompatibility of these two descriptions.

\subsection{Overview}\label{sec:overview}
We outline the proof of Theorem~\ref{thm:main}, suppressing several
convergence and boundary-regularity issues that are addressed in later
sections.  Put \(s_1(0)=0\) and \(b(n)=s_1(n)-s_1(n-1)\).  Then
\[
  s_1(n)=\sum_{m\leq n} b(m),
\]
so the summatory problem is naturally encoded by the Dirichlet series
\[
  B(w):=\sum_{n\geq1}\frac{b(n)}{n^w}.
\]
Since \(s_1(n)\) has a linear main term, the Dirichlet series encoding the error term is isolated by
factoring off the pole of \(\zeta(w)\).  M\"obius inversion gives
\(b(n)=\sum_{m|n}a(m)\), with \[a(m):=\sum_{p\ \mathrm{prime}}
    \sum_{\substack{d\ge1\\P^+(d)<p}}
    \mu(d)\bigl(\1_{m=pd}-\1_{m=(p+1)d}\bigr);\]  see
Section~\ref{sec:mobius-inversion} and Lemma~\ref{lem:exact-formula}
below.  Hence
\[
  B(w)=\zeta(w)D(w),
  \qquad
  D(w):=\sum_{n\geq1}\frac{a(n)}{n^w}.
\]

The next step is to express the high derivatives of \(D\) at \(w=1\) as a
prime sum.  Ignoring for the moment the justification of the right-hand
derivatives, which is supplied in Section~\ref{sec:prime-sum} below, one
has
\[
  D_k:=D^{(k)}(1)=\sum_p T_p^{(k)}(1),
  \qquad
  T_p(w):=\bigl(p^{-w}-(p+1)^{-w}\bigr)
  \prod_{q<p}(1-q^{-w}).
\]
Writing \(\mathcal L_p:=\log p\) and
\[
  G_p(z):=\frac{T_p(1+z/\mathcal L_p)}{T_p(1)},
\]
the chain rule gives
\[
  D_k=\sum_p T_p(1)\mathcal L_p^kG_p^{(k)}(0).
\]

For large primes \(p\), the normalized factors \(G_p\) are close to the
entire function
\[
  f(z)=\exp\{-z+\Ein(z)\},
  \qquad
  \Ein(z)=\sum_{m\geq1}\frac{(-1)^{m+1}z^m}{m\,m!}.
\]
This limiting function can be understood by looking at \(\log G_p(z)\).
As in \eqref{eq:G-factorization} below, \(G_p\) factors as a normalized
difference term times a normalized Euler product.  For the difference term,
the change of variables \(w:=1+z/\mathcal L_p\) gives
\[
  \frac{p^{-w}-(p+1)^{-w}}{p^{-1}-(p+1)^{-1}}
  =
  \e^{-z}\left(1+p(1-\e^{-\delta_p z})\right),
  \qquad
  \delta_p:=\frac{\log(1+1/p)}{\mathcal L_p}.
\]
Since \(p\delta_p=1/\mathcal L_p+O(1/(p\mathcal L_p))\), this factor has
logarithm \(-z+o(1)\).  For the Euler product, taking logarithms and
keeping the first-order terms in the prime factors gives
\[
  \sum_{q<p}
  \log\left(
    \frac{1-q^{-1}\e^{-z\log q/\mathcal L_p}}{1-q^{-1}}
  \right)
  =
  \sum_{q<p}\frac{1-\e^{-z\log q/\mathcal L_p}}q+o(1).
\]
The prime number theorem then turns this sum into a Riemann integral on
the logarithmic scale \(t=\log q/\log p\):
\[
  \sum_{q<p}\frac{1-\e^{-z\log q/\mathcal L_p}}q
  \approx
  \int_0^1\frac{1-\e^{-zt}}t\dd t
  =\Ein(z).
\]
Thus \(\log G_p(z)\approx -z+\Ein(z)\), which is the logarithm of \(f(z)\).
Consequently, for the primes that contribute the most to \(D_k\), one expects
\[
  G_p^{(k)}(0)\approx f^{(k)}(0),
\]
and therefore the behaviour of \(D_k\) is governed by the derivatives of
\(f\).  This approximation is made uniform in Lemma~\ref{lem:uniform-Gp}
and transferred to derivatives in Lemma~\ref{lem:derivative-approximation}
below.

The derivatives of $f$ at $0$ are estimated by Cauchy's formula,
\begin{equation}\label{eq: cauchy for higher derivates}
    \frac{f^{(k)}(0)}{k!}
  =
  \frac{1}{2\pi i}
  \int_{|z|=r}\frac{f(z)}{z^{k+1}}\dd z.
\end{equation}
Writing \(H(z)=-z+\Ein(z)\), the saddle equation for this integral is
\[
  zH'(z)=1-z-\e^{-z}=k.
\]
For large \(k\) the relevant saddle points form a conjugate pair
\(z_k,\overline{z_k}\), with
\[
  z_k=-\log k-i\pi+o(1).
\]
See Lemma~\ref{lem:saddle-points} below for the construction and
asymptotics of these saddle points.
Choosing \(r=|z_k|\), the main contribution to the  integral in~\eqref{eq: cauchy for higher derivates} comes
from neighbourhoods of these two points.  More precisely,
\[
  \frac{f^{(k)}(0)}{k!}
  =
  Q_k+\overline{Q_k}+o(|Q_k|)
  =
  2|Q_k|\bigl(\cos\theta_k+o(1)\bigr),
\]
where \(Q_k:=f(z_k)/(z_k^k\sqrt{2\pi (-z_k)(k+z_k)})\) and
\(\theta_k:=\arg Q_k\).  The phase \(\theta_k\) varies in a sufficiently controlled manner that we can choose two sequences \(k_j^+,k_j^-\to\infty\)
for which \((-1)^{k_j^\pm-1}\cos\theta_{k_j^\pm}\) tends respectively to
\(1\) and \(-1\).  These estimates are proved in
Section~\ref{sec:saddle-analysis} below; the final coefficient asymptotic
is Proposition~\ref{prop:f-saddle}, and the phase selection is
Lemma~\ref{lem:phase-variation}.

The passage from \(f^{(k)}(0)\) back to \(D_k\) uses the positivity and
asymptotic size of the weights \(T_p(1)\mathcal L_p^k\).  The weighted
prime number theorem and Mertens' product theorem give, as shown in
Lemma~\ref{lem:weighted-prime-moment} below,
\[
  \sum_p T_p(1)\mathcal L_p^k
  =
  (\e^{-\gamma}+o(1))\Gamma(k-1),
\]
while the contribution from primes in the range where we do not have the required approximation is negligible.  Combining this
with the uniform approximation \(G_p\approx f\) yields
\[
  D_k
  =
  \e^{-\gamma}\Gamma(k-1)2k!|Q_k|
  \bigl(\cos\theta_k+o(1)\bigr).
\]
In particular, the derivatives \(D_k\) have large positive and large
negative values along suitable subsequences, after multiplication by the
natural sign \((-1)^{k-1}\).  The asymptotic formula is
Proposition~\ref{prop:Dk-asymptotic}, and the resulting signed lower
bounds are stated in Proposition~\ref{prop:Dk-lower}, both proved in
Section~\ref{sec:lower-bound-Dk} below.

It remains to show that such large values are incompatible with a
one-sided error bound of size \(R_\alpha(n)\) when \(\alpha>\sqrt2\).
This is done in Section~\ref{sec:upper-bound-Dk} below.
Write
\[
  E(n):=s_1(n)-\Xi n,
  \qquad
  h(t):=\e^{-t}E(\lfloor\e^t\rfloor).
\]
A Mellin-transform identity gives
\[
  D_k=\int_0^\infty h(t)\mathcal{K}_k(t)\dd t,
\]
where \(\mathcal{K}_k\) is a polynomial of degree at most \(k-1\) whose
leading term is \(k(-t)^{k-1}\).  Hence
\((-1)^{k-1}\mathcal{K}_k(t)\) has a fixed positive sign once \(t\) is a
sufficiently large multiple of \(k\); see
Lemmas~\ref{lem:integral-representation} and~\ref{lem:kernel-sign} below.
If \(E(n)\leq C R_\alpha(n)\), or if \(E(n)\geq-C R_\alpha(n)\), then for
each \(0<\beta<\alpha\) this identity gives a corresponding one-sided bound for
\((-1)^{k-1}D_k\), governed by
\[
  J_{k-1}(\beta)
  :=
  \int_1^\infty t^{k-1}
  \exp\{-\beta\sqrt{t\log t}\}\dd t.
\]
A gamma-integral majorant shows that
\[
  J_{k-1}(\beta)
  \leq
  \e^{o(k)}
  \left(
    \frac{2}{\beta^2}
  \right)^k
  \left(
    \frac{k^2}{\e^2\log k}
  \right)^k.
\]
This is Lemma~\ref{lem:J-upper}, and it leads to
Proposition~\ref{prop:Dk-upper}.  Choosing \(\sqrt2<\beta<\alpha\), this
upper bound is smaller than the signed saddle-point lower bounds along the
subsequences above.  This contradiction proves both the limsup and the
liminf assertions in Theorem~\ref{thm:main}.

Theorem~\ref{thm:improved-error} is proved separately in
Section~\ref{sec:improved-error}.  There we return to the exact
least-prime decomposition
\[
  s_1(n)=1+\sum_{p<n}|M_p(n)|,
\]
apply the fundamental lemma of sieve theory in the range
\(2\log n<p\leq Y\), and take
\[
  Y=\exp(2\sqrt{\log n\log\log n}).
\]
The small-prime and large-prime ranges are then absorbed into the same
error bound; see Lemmas~\ref{lem:interval-fundamental},
\ref{lem:squarefree-smooth-bound}, and~\ref{lem:improved-large-prime-tail}
below.
\subsection{Acknowledgments}
P. Gokhale thanks his advisors, Misha Khodak and Sandeep Silwal, for their encouragement, for fostering an environment that supports AI-assisted mathematical research, and for providing access to ChatGPT for Academic Researchers. S. Jain would like to thank his advisor, Kaisa Matom{\"a}ki, for her guidance and encouragement and for providing him with access to a ChatGPT Business subscription. S. Jain was supported by the Research Council of Finland (grant numbers 346307, 364214, and 370133) and by the University of Turku Graduate School's Exactus fellowship while working on this project.
\subsection{AI Assistance}
The argument for the improved error term was developed with assistance from ChatGPT.  The underlying improvement is a natural consequence of
the fundamental lemma of sieve theory, but ChatGPT helped organize the details
of the proof.  For the sharpness result, the authors initially expected that
a power saving might be possible.  Work with Codex led the authors
to the conclusion that no fixed power saving can hold. The original output used a \textit{non-standard} complex-analytic approach (at least for an analytic number theorist) and established only that the error is not $O(n^{1-\varepsilon})$ for any $\varepsilon>0$. Subsequently, the authors streamlined and optimized the argument into the form presented
here with 
GPT-assisted revisions. It should be noted that the authors pursued the two-sided oscillation result using saddle-point analysis only because of the original Codex output. In addition, the present approach draws substantially on ideas from that AI-generated output at several crucial points. The Codex agent had access to Lean as a tool, which it used to verify parts of the argument it generated. We do not have a Lean formalization for the current version of the argument. The authors checked the AI-assisted material and take full
responsibility for the mathematical content of the paper.
\section{An Exact Formula from M\"obius Inversion}\label{sec:mobius-inversion}
In this section we introduce a Dirichlet series that isolates the error term
in the asymptotic formula for \(s_1(n)\).
Set \(P^+(1)=1\) and $P^-(1) = \infty$, and for \(d>1\) let \(P^+(d)\) (resp. $P^-(d)$) be the largest (resp. smallest) prime
factor of \(d\).  Let \(\mu\) be the M\"obius function, and let
\(\1_E\) denote the indicator of a condition \(E\).  Define
\begin{equation}\label{eq:a-def}
  a(m):=\sum_{p\ \mathrm{prime}}
    \sum_{\substack{d\ge1\\P^+(d)<p}}
    \mu(d)\bigl(\1_{m=pd}-\1_{m=(p+1)d}\bigr).
\end{equation}
\begin{lemma}\label{lem:exact-formula}
For every integer \(n\ge1\),
\begin{equation}\label{eq:exact-formula}
  s_1(n)=\sum_{m\le n}a(m)\left\lfloor\frac nm\right\rfloor.
\end{equation}
\end{lemma}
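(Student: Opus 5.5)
The plan is to derive \eqref{eq:exact-formula} from an elementary characterization of the nonzero remainders, then count by least prime factor with a Legendre-type M\"obius sieve. The main obstacle is bookkeeping: getting the characterization exactly right at the boundary cases ($r=0$, $k\le n/2$, strict versus non-strict inequalities), so that every element of $S_1(n)$ is counted exactly once.

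\emph{Characterizing $S_1(n)$.} Let $n\ge2$. The value $0$ lies in $S_1(n)$ via $k=1$. For $r\ge1$ we have $r\in S_1(n)$ iff there is $k\le n/2$ with $k\mid n-r$ and $k>r$. Put $m:=n-r$ and $d:=m/k$.
\begin{itemize}
\item Since $m<n$ and $k\le n/2$, we get $d\ge2$ automatically. Conversely, any $d\ge2$ gives $k\le m/2<n/2$.
\item The condition $k>r$ reads $m/d>n-m$, i.e.\ $m(d+1)>dn$.
\item The function $d\mapsto d/(d+1)$ is increasing, so among divisors $d\ge 2$ of $m$ the condition is weakest for the least prime factor $p=P^-(m)$.
\end{itemize}
Hence $r\ge1$ lies in $S_1(n)$ iff $m=n-r>1$ satisfies $m(p+1)>pn$ with $p=P^-(m)$. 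The remainder $r=0$ corresponds to $m=n$, which satisfies this inequality trivially. Therefore
\[
  s_1(n)=\#\{1<m\le n:\ m(P^-(m)+1)>P^-(m)\,n\}
  =\sum_{p}\Bigl(N_p(n)-N_p\bigl(\tfrac{pn}{p+1}\bigr)\Bigr),
\]
where $N_p(x):=\#\{m\le x: P^-(m)=p\}$. For $n=1$ both sides of \eqref{eq:exact-formula} vanish, which I will check directly.

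\emph{Sieving by least prime factor.} Write $m=pe$. Then $P^-(m)=p$ iff $e$ has no prime factor $<p$. The Legendre-type M\"obius identity gives
\[
  \1_{P^-(m)=p}=\1_{p\mid m}\sum_{d\mid e,\ P^+(d)<p}\mu(d).
\]
Summing over $m\le x$ yields
\[
  N_p(x)=\sum_{P^+(d)<p}\mu(d)\lfloor x/(pd)\rfloor.
\]
With $x=n$ this gives $\lfloor n/(pd)\rfloor$. With $x=pn/(p+1)$ it gives $\lfloor n/((p+1)d)\rfloor$. All sums are finite, since the terms vanish once $pd>n$.

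\emph{Conclusion.} Subtracting the two sieve expressions and regrouping by $m=pd$ and $m=(p+1)d$ gives exactly $\sum_{m\le n}a(m)\lfloor n/m\rfloor$ by the definition \eqref{eq:a-def}. The regrouping is legitimate because only finitely many pairs $(p,d)$ contribute.
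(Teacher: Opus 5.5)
Your proof is correct. It shares the paper's skeleton: decompose by the least prime factor of $m=n-r$, apply the Legendre--M\"obius sieve for $P^-(e)\ge p$, and regroup by the denominators $pd$ and $(p+1)d$. The bookkeeping differs in a way worth noting.

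\textbf{The paper's route.} It imports $s_1(n)=1+\sum_{p<n}|M_p(n)|$ from Baraskar--Vukusic, and this decomposition covers only the nonzero remainders, i.e.\ $m\le n-1$. Sieving therefore produces $\lfloor (n-1)/(pd)\rfloor$. The paper rewrites this as $\lfloor n/(pd)\rfloor-\1_{pd\mid n}$ and then checks that the resulting correction $C(n)$ equals $1$, so that it cancels the separately counted remainder $0$.

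\textbf{Your route.} You derive the characterization from scratch and fold $r=0$ in as $m=n$. Each prime then counts $m\in(pn/(p+1),n]$ with $P^-(m)=p$, and $\lfloor n/(pd)\rfloor$ appears immediately. The identity $C(n)=1$ never arises; its content, that $n\ge2$ has exactly one least prime factor, is absorbed into your count.

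\textbf{What each buys.} Your version is self-contained and shorter. The paper's version introduces $M_p(n)$, $I_p$ and $H_p$, which it reuses in the sieve argument for Theorem~\ref{thm:improved-error}.

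\textbf{One small repair.} The statement that $m<n$ and $k\le n/2$ alone force $d\ge2$ is false; take $n=10$, $r=6$, $k=m=4$, where $d=1$. You need $k>r$ as well. If $d=1$, then $k=n-r\le n/2$ gives $r\ge n/2\ge k$, contradicting $k>r$. Since $k>r$ is part of your hypothesis, the conclusion stands once this is stated.
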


\begin{proof}

Note that $s_1(1) = a(1) = 0$. By \cite[Eq.~(4)]{BaraskarVukusic2026}, for \(n\geq 2\),
\begin{equation}\label{eq:least-prime-decomposition}
  s_1(n)=1+\sum_{p<n}|M_p(n)|,
\end{equation}
where
\[
  M_p(n):=
  \left\{
    m\in
    \left[
      n-\left\lfloor\frac{n-p}{p+1}\right\rfloor,
      n-1
    \right]:
    p\mid m
    \quad \text{and}\quad 
    P^-(m)\geq p
  \right\}.
\]
Thus, on writing an element of
\(M_p(n)\) as \(pa\), one obtains
\begin{equation}\label{eq:Mp-rough-interval}
  |M_p(n)|
  =\#\{a\in I_p\cap\mathbb Z:P^-(a)\geq p\},
  \qquad
  I_p:=\left(\frac n{p+1},\frac{n-1}{p}\right].
\end{equation}
M\"obius inversion gives
\[
  \1_{P^-(q)\geq p}
  =\sum_{\substack{d\mid q\\P^+(d)<p}}\mu(d).
\]
Thus,
\begin{equation*}\label{eq:local-mobius-count}
  |M_p(n)|=
  \sum_{\substack{d\ge1\\P^+(d)<p}}\mu(d)
  \left(
    \left\lfloor\frac{n-1}{pd}\right\rfloor
    -\left\lfloor\frac{n}{(p+1)d}\right\rfloor
  \right).
\end{equation*}

Before replacing the first floor, note that we may extend the prime sum from \(p<n\) to \(p\le n\).
Now using
\begin{align}\label{eqn: floor diff}
   \1_{v\mid n} 
  =\left\lfloor\frac nv\right\rfloor-\left\lfloor\frac{n-1}{v}\right\rfloor
  \qquad(v\ge1),
\end{align}
with \(v=pd\) and then summing
over \(p\) gives
\begin{align*}
  s_1(n)
  &=
  1+
  \sum_{\substack{p,d\geq1\\ p\ \mathrm{prime}\\P^+(d)<p}}
  \mu(d)
  \left(
    \left\lfloor\frac{n}{pd}\right\rfloor
    -\1_{pd\mid n}
    -\left\lfloor\frac{n}{(p+1)d}\right\rfloor
  \right)\\
  &=
  1+
  \sum_{\substack{p,d\geq1\\ p\ \mathrm{prime}\\P^+(d)<p}}
  \mu(d)
  \left(
    \left\lfloor\frac{n}{pd}\right\rfloor
    -\left\lfloor\frac{n}{(p+1)d}\right\rfloor
  \right)
  -C(n),
\end{align*}
where the correction term is
\[
  C(n):=
  \sum_{\substack{p,d\geq1\\ p\ \mathrm{prime}\\
                   P^+(d)<p\\pd\mid n}}
  \mu(d).
\]
If \(\mu(d)\ne0\), then \(v=pd\) is squarefree and \(p=P^+(v)\); conversely, every squarefree \(v\ge2\) occurs once. Since \(\mu(d)=-\mu(v)\), for $n>1$, \[ C(n)=-\sum_{\substack{v\mid n\\v\ge2}}\mu(v)=1. \]
Thus, we obtain
\begin{equation}\label{eq:post-correction}
  s_1(n)=
  \sum_{\substack{p,d\geq1\\ p\ \mathrm{prime}\\P^+(d)<p}}
  \mu(d)
  \left(
    \left\lfloor\frac{n}{pd}\right\rfloor
    -\left\lfloor\frac{n}{(p+1)d}\right\rfloor
  \right).
\end{equation}

Grouping the terms in \eqref{eq:post-correction} according to the
denominator of the floor function finishes the proof.
\end{proof}

Put \(b(n)=s_1(n)-s_1(n-1)\), with \(s_1(0)=0\).  Using \eqref{eq:exact-formula} and~\eqref{eqn: floor diff} gives
\begin{equation}\label{eq:dirichlet-convolution}
  b(n)=\sum_{m\mid n}a(m).
\end{equation}
For \(\Re w>1\), define
\[
  D(w)=\sum_{m\ge1}\frac{a(m)}{m^w},
  \qquad
  B(w)=\sum_{n\ge1}\frac{b(n)}{n^w}.
\]
Here and below, \(\zeta\) denotes the Riemann zeta function.
If \(\sigma=\Re w>1\), then \eqref{eq:a-def} gives
\begin{align}
  \sum_m|a(m)|m^{-\sigma}
  &\le \sum_p\bigl(p^{-\sigma}+(p+1)^{-\sigma}\bigr)
    \prod_{q<p}(1+q^{-\sigma}) \notag\\
  &\le 2\zeta(\sigma)\sum_p p^{-\sigma}<\infty.
  \label{eq:absolute-convergence}
\end{align}
Thus both series converge absolutely, and
\eqref{eq:dirichlet-convolution} gives
\begin{equation}\label{eq:dirichlet-product}
  B(w)=\zeta(w)D(w).
\end{equation}
\section{Prime-Sum Representation of \texorpdfstring{$D^{(k)}(w)$}{the Derivatives of D(w)}}\label{sec:prime-sum}
In this section we decompose \(D(w)\) into a prime sum and set up the
analytic framework needed to make sense of the higher derivatives of
\(D(w)\) at \(w=1\).
For each prime \(p\), set
\[
  \mathcal L_p:=\log p,
  \qquad
  d_p(w):=p^{-w}-(p+1)^{-w},
\]
and
\[
  V_p(w):=\prod_{q<p}(1-q^{-w}),
  \qquad
  T_p(w):=d_p(w)V_p(w).
\]
Note that
\[
  \sum_{\substack{d\geq 1\\ P^+(d)<p}}\frac{\mu(d)}{d^w}
  =
  \prod_{q<p}(1-q^{-w})
  =
  V_p(w).
\]
Hence, by the absolute convergence of \(D(w)\) in the half-plane
\(\Re w>1\) (see \eqref{eq:absolute-convergence}), we may rearrange its defining series to obtain
\begin{equation*}\label{eq:prime-decomposition}
  D(w)=\sum_p T_p(w).
\end{equation*}

We next collect the derivative estimates required below.

\begin{lemma}\label{lem:derivative-estimates}
There exists a fixed integer \(J\geq 0\) such that the following
statements hold.

For every integer \(j\geq 0\), every prime \(p\), and every \(w\) with
\(\Re w\geq 1\), one has
\begin{equation}\label{eq:Vp-derivative-bound}
  \bigl|V_p^{(j)}(w)\bigr|
  \ll_j
  (1+\mathcal L_p)^{J+j}.
\end{equation}
Moreover, for every \(\tau,R>0\) and all integers \(k\geq 0\),
uniformly for
\(
  1\leq \Re w\leq 1+\tau,
  \)
  \(|\Im w|\leq R,
\)
one has
\begin{align}
  % \bigl|d_p^{(j)}(w)\bigr|
  % &\ll_{j,\tau,R}
  % \frac{(1+\mathcal L_p)^{j}}{p^2},
  % \label{eq:dp-derivative-bound}\\
  \bigl|T_p^{(k)}(w)\bigr|
  &\ll_{k,\tau,R}
  \frac{(1+\mathcal L_p)^{J+k}}{p^2}.
  \label{eq:Tp-derivative-bound}
\end{align}
\end{lemma}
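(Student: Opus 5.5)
The plan is to treat \(V_p\) and \(d_p\) separately and combine them with the Leibniz rule. For \(V_p\) I will use a positive majorant together with Cauchy's estimate on a circle of radius \(1/\mathcal L_p\). For \(d_p\) I will use the mean value theorem in the variable \(p\). The value of \(J\) falls out of the majorant argument; I expect \(J=3\) to suffice.

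\emph{Step 1: the bound for \(V_p\).} Fix \(w\) with \(\Re w\ge1\) and set
\[
  \Phi(x):=V_p(w+x)=\prod_{q<p}\bigl(1-q^{-w}\e^{-x\log q}\bigr),
\]
which is entire in \(x\), with \(\Phi^{(j)}(0)=V_p^{(j)}(w)\). I will not expand the product factor by factor. Instead I apply Cauchy's inequality on the circle \(|x|=1/\mathcal L_p\) (for \(p=2\) the product is empty and \(V_2\equiv 1\), so take \(p\geq 3\)). On that circle, every \(q<p\) satisfies \(|x\log q|\le 1\) and \(|q^{-w}|\le q^{-1}\), so each factor has modulus at most \(1+\e/q\). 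Hence
\[
  |\Phi(x)|\le\prod_{q<p}\Bigl(1+\frac{\e}{q}\Bigr)
  \le\exp\Bigl(\e\sum_{q<p}\frac1q\Bigr)\ll \mathcal L_p^{\e}
\]
by Mertens' theorem. Cauchy's inequality then gives
\[
  |V_p^{(j)}(w)|\le j!\,\mathcal L_p^{\,j}\max_{|x|=1/\mathcal L_p}|\Phi(x)|
  \ll_j (1+\mathcal L_p)^{3+j}.
\]
This is \eqref{eq:Vp-derivative-bound} with \(J=3\). It is uniform in \(\Im w\), as the statement requires, because only \(|q^{-w}|\le q^{-1}\) was used.

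\emph{Step 2: the bound for \(d_p\).} Write \(d_p^{(i)}(w)=(-1)^i\bigl(g(p)-g(p+1)\bigr)\) with \(g(u):=(\log u)^i u^{-w}\). Then
\[
  g'(u)=u^{-w-1}\bigl(i(\log u)^{i-1}-w(\log u)^i\bigr).
\]
For \(1\le\Re w\le1+\tau\), \(|\Im w|\le R\) and \(u\in[p,p+1]\), this gives
\[
  |g'(u)|\ll_{i,\tau,R}(1+\mathcal L_p)^i p^{-2}.
\]
The mean value theorem, applied to the real and imaginary parts, yields \(|d_p^{(i)}(w)|\ll_{i,\tau,R}(1+\mathcal L_p)^i/p^2\).

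\emph{Step 3: combining.} By Leibniz,
\[
  T_p^{(k)}=\sum_{i=0}^{k}\binom ki d_p^{(i)}V_p^{(k-i)}.
\]
Inserting the two bounds, each term is \(\ll(1+\mathcal L_p)^{i}p^{-2}(1+\mathcal L_p)^{J+k-i}\). Summing over \(i\) gives \eqref{eq:Tp-derivative-bound}.

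The only point needing care is Step 1. A naive term-by-term expansion of the product derivative hides a combinatorial factor that could grow with \(p\). The Cauchy-circle choice \(r=1/\mathcal L_p\) avoids this, since it keeps every factor bounded by \(1+\e/q\) uniformly in \(q<p\).
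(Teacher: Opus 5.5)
Your proposal is correct and follows essentially the same route as the paper. Both arguments apply Cauchy's estimate to \(V_p\) on the circle of radius \(1/\mathcal L_p\), bound each factor by \(1+\e/q\) and use Mertens' theorem, bound \(d_p^{(i)}\) through the derivative of \((\log u)^i u^{-w}\) on \([p,p+1]\), and combine the two by Leibniz's rule. The only cosmetic differences are that you make \(J=3\) explicit (valid since \(\e<3\)), and that you apply the mean value theorem to real and imaginary parts where the paper integrates the derivative over \([p,p+1]\).
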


\begin{proof}
We first prove \eqref{eq:Vp-derivative-bound}. The prime \(p=2\)
causes no difficulty, so suppose that \(p\geq 3\). Fix \(w\) with
\(\Re w\geq 1\), and consider the circle
\[
  |z-w|=\frac{1}{\mathcal L_p}.
\]
For every \(z\) on this circle and every prime \(q<p\), we have
\[
  |q^{-z}|
  =q^{-\Re z}
  \leq \frac{q^{1/\mathcal L_p}}{q}
  \leq \frac{\mathrm e}{q}.
\]
It follows that
\[
  |V_p(z)|
  \leq
  \prod_{q<p}\bigl(1+|q^{-z}|\bigr)
  \leq
  \exp\left(\mathrm e\sum_{\substack{q<p\\q\text{ prime}}}\frac1q\right).
\]
By Mertens' estimate for the sum of the reciprocals of the primes,
there is a fixed integer \(J\geq 0\) such that
\[
  |V_p(z)|\ll (1+\mathcal L_p)^J.
\]
Cauchy's estimate on the circle of radius \(1/\mathcal L_p\) therefore
gives
\[
  |V_p^{(j)}(w)|
  \leq
  j!\,\mathcal L_p^j
  \max_{|z-w|=1/\mathcal L_p}|V_p(z)|
  \ll_j (1+\mathcal L_p)^{J+j}.
\]

We next estimate the derivatives of \(d_p\). Differentiating $j$ times
gives
\[
  d_p^{(j)}(w)
  =
  (-\log p)^j p^{-w}
  -
  \bigl(-\log(p+1)\bigr)^j(p+1)^{-w}.
\]
Fix \(j\) and \(w\). For $x\geq 2$, define
\[
  f_{j,w}(x):=(-\log x)^j x^{-w}.
\]
Then
\[
  d_p^{(j)}(w)=f_{j,w}(p)-f_{j,w}(p+1).
\]
For complex \(w\), we have the identity
\[
  d_p^{(j)}(w)
  =
  -\int_p^{p+1}f_{j,w}'(x)\,dx.
\]
A direct calculation gives
\[
  f_{j,w}'(x)
  =
  (-1)^j x^{-w-1}
  \left(
    j(\log x)^{j-1}-w(\log x)^j
  \right),
\]
where the first term is omitted when \(j=0\). If
\(1\leq\Re w\leq1+\tau\) and \(|\Im w|\leq R\), then
\(|w|\ll_{\tau,R}1\). Moreover, for \(p\leq x\leq p+1\),
\[
  x^{-\Re w-1}\leq p^{-2},
  \qquad
  1+\log x\ll 1+\mathcal L_p.
\]
Consequently,
\[
  |f_{j,w}'(x)|
  \ll_{j,\tau,R}
  \frac{(1+\mathcal L_p)^{j}}{p^2}.
\]
Integration over \([p,p+1]\) proves
% \eqref{eq:dp-derivative-bound}.
\begin{align}
    \bigl|d_p^{(j)}(w)\bigr|
  &\ll_{j,\tau,R}
  \frac{(1+\mathcal L_p)^{j}}{p^2}.
  \label{eq:dp-derivative-bound}
\end{align}

Finally, Leibniz's rule gives
\[
  T_p^{(k)}(w)
  =
  \sum_{r=0}^k
  \binom{k}{r}
  d_p^{(r)}(w)V_p^{(k-r)}(w).
\]
Substituting the bounds in \eqref{eq:Vp-derivative-bound} and
\eqref{eq:dp-derivative-bound} proves \eqref{eq:Tp-derivative-bound}.
\end{proof}
% Since
% \[
%   \sum_p\frac{(1+\log p)^A}{p^2}<\infty
% \]
% for every fixed \(A\geq0\), Lemma~\ref{lem:derivative-estimates}
% implies that, for every \(k\geq0\), the series
% \[
%   \sum_p T_p^{(k)}(w)
% \]
% converges uniformly on compact subsets of the closed half-plane
% \(\Re w\geq1\).  In particular, for \(\Re w>1\) we may differentiate
% \eqref{eq:prime-decomposition} term by term to obtain
% \[
%   D^{(k)}(w)=\sum_pT_p^{(k)}(w).
% \]
% We henceforth use the same notation \(D^{(k)}\) for its continuous
% extension to \(\Re w\geq1\), namely
% \begin{equation}\label{eq:D-derivative-prime-series}
%   D^{(k)}(w):=\sum_pT_p^{(k)}(w),
%   \qquad \Re w\geq1.
% \end{equation}
For every compact set
\[
  K\subseteq \{w\in\C:\Re w\geq 1\},
\]
choose \(\tau,R>0\) such that
\[
  K\subseteq
  \{w:1\leq \Re w\leq 1+\tau,\ |\Im w|\leq R\}.
\]
By Lemma~\ref{lem:derivative-estimates},
\[
  \sup_{w\in K}|T_p^{(k)}(w)|
  \ll_{k,\tau, R}\frac{(1+\log p)^{J+k}}{p^2}.
\]
Since
\[
  \sum_p\frac{(1+\log p)^{J+k}}{p^2}<\infty,
\]
the series
\[
  \sum_p T_p^{(k)}(w)
\]
converges absolutely and uniformly on \(K\). Hence, by the
Weierstrass theorem, the function \(\sum_pT_p(w)\) is holomorphic in
\(\Re w>1\), and therefore
\[
  D^{(k)}(w)=\sum_pT_p^{(k)}(w).
\]

We henceforth use the same notation \(D^{(k)}\) for its continuous
extension to \(\Re w\geq1\), namely
for \(k\geq0\), put
\[
  D^{(k)}(w):=\sum_pT_p^{(k)}(w),
  \qquad \Re w\geq1.
\]
For \(0<a<b\), the fundamental
theorem of calculus gives
\[
  D^{(k)}(1+b)-D^{(k)}(1+a)
  =\int_a^bD^{(k+1)}(1+u)\,du.
\]
Letting \(a\downarrow0\), we obtain
\[
  D^{(k)}(1+b)-D^{(k)}(1)
  =\int_0^bD^{(k+1)}(1+u)\,du.
\]
Consequently, \(x\mapsto D(1+x)\) has a \(C^\infty\) extension to
\([0,\infty)\), and
\[
  \left.\frac{d^k}{dx^k}D(1+x)\right|_{x=0+}
  =D^{(k)}(1)=\sum_pT_p^{(k)}(1).
\]

For every integer \(k\geq0\), put
\begin{equation}\label{eq:def-Dk}
  D_k:=D^{(k)}(1)
      =\sum_pT_p^{(k)}(1).
\end{equation}
In particular,
\[
  D(1)=D_0=\sum_pT_p(1)=\Xi.
\]

% Since
% \[
%   \sum_p\frac{(1+\log p)^A}{p^2}<\infty
% \]
% for every fixed \(A\geq 0\), Lemma~\ref{lem:derivative-estimates}
% implies that, for every \(k\geq 0\), the series
% \[
%   \sum_p T_p^{(k)}(w)
% \]
% converges uniformly on compact subsets of the closed half-plane
% \(\Re w\geq 1\). In particular, it converges locally uniformly in
% \(\Re w>1\). We may therefore differentiate
% \eqref{eq:prime-decomposition} term by term when $\Re w>1$ to obtain
% \begin{equation}\label{eq:D-derivative-prime-series}
%   D^{(k)}(w)=\sum_p T_p^{(k)}(w).
% \end{equation}

% For every integer \(k\geq 0\), define
% \begin{equation}\label{eq:def-Dk}
%   D_k:=\sum_p T_p^{(k)}(1).
% \end{equation}
% In particular, 
% \begin{align*}
%     D_0 = \sum_{p}T_p(1) = \Xi.
% \end{align*}
% The series in~\eqref{eq:def-Dk} is absolutely convergent by
% \eqref{eq:Tp-derivative-bound}. Taking \(w=\sigma\) real in
% \eqref{eq:D-derivative-prime-series} and applying dominated
% convergence on any fixed interval \(1\leq\sigma\leq1+\tau\), we obtain
% \begin{equation}\label{eq:def-Dk}
%   D^{(k)}(\sigma)=\sum_p T_p^{(k)}(\sigma),
%   \qquad
%   \lim_{\sigma\downarrow1}D^{(k)}(\sigma)=D_k.
% \end{equation}

Note that 
\[
  T_p(1)
  =\frac{1}{p(p+1)}
   \prod_{q<p}\left(1-\frac1q\right)>0.
\]
Therefore, for each prime \(p\), we may
define
\begin{equation}\label{eq:normalized-Gp}
  G_p(z):=
  \frac{T_p(1+z/\mathcal L_p)}{T_p(1)}.
\end{equation}

Repeated application of the chain rule gives
\[
  G_p^{(k)}(z)
  =
  \frac{1}{T_p(1)\mathcal L_p^k}
  T_p^{(k)}\left(1+\frac{z}{\mathcal L_p}\right).
\]
Evaluating at \(z=0\), we find
\[
  T_p^{(k)}(1)
  =
  T_p(1)\mathcal L_p^kG_p^{(k)}(0).
\]
Substituting this identity into \eqref{eq:def-Dk} yields
\begin{equation*}\label{eq:Dk-prime-sum}
  D_k
  =
  \sum_p
  T_p(1)\mathcal L_p^kG_p^{(k)}(0).
\end{equation*}
We conclude this section by isolating the two estimates for \(D_k\) that
will drive the proof of Theorem~\ref{thm:main}.  Their proofs occupy the
subsequent sections.

\begin{proposition}
\label{prop:Dk-upper}
Let \(\alpha>0\), and suppose that \(C>0\) is fixed.

If
\[
  s_1(n)-\Xi n\leq C R_\alpha(n)
\]
for all sufficiently large \(n\), then, for every
\(0<\beta<\alpha\) and for all sufficiently large \(k\),
\begin{equation}\label{eq:Dk-upper-positive}
  (-1)^{k-1}D_k
  \leq
  \e^{o(k)}\left(\frac{2}{\beta^2}\right)^k
  \left(
    \frac{k^2}{e^2\log k}
  \right)^k.
\end{equation}
If
\[
  s_1(n)-\Xi n\geq-C R_\alpha(n)
\]
for all sufficiently large \(n\), then, under the same conditions,
\begin{equation}\label{eq:Dk-upper-negative}
  (-1)^{k-1}D_k
  \geq
  -\e^{o(k)}\left(
    \frac{2}{\beta^2}
  \right)^k
  \left(
    \frac{k^2}{e^2\log k}
  \right)^k.
\end{equation}
\end{proposition}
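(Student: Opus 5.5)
We plan to express $D_k$ as an integral of the error term against an explicit kernel, and then use the one-sided hypothesis on the part of the range where that kernel has a fixed sign. The steps are the integral representation, a sign analysis of the kernel, and the estimation of the resulting integral $J_{k-1}(\beta)$.

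\textbf{Step 1: the integral representation.} For $\Re w>1$ partial summation gives
\[
B(w)=w\int_1^\infty s_1(\lfloor x\rfloor)x^{-w-1}\dd x ,
\qquad
\Xi w\int_1^\infty x^{-w}\dd x=\frac{\Xi w}{w-1}.
\]
Writing $E(n)=s_1(n)-\Xi n$ and combining these, we obtain
\[
B(w)=\frac{\Xi w}{w-1}+w\int_1^\infty E(\lfloor x\rfloor)x^{-w-1}\dd x+(\text{bounded entire correction from }\{x\}).
\]
Substituting $x=\e^t$ and $w=1+u$ turns the middle term into $(1+u)\int_0^\infty h(t)\e^{-ut}\dd t$, where $h(t)=\e^{-t}E(\lfloor \e^t\rfloor)$.

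Next divide by $\zeta(1+u)$, using $D=B/\zeta$. The function $1/\zeta(1+u)$ is entire in $u$ and vanishes at $u=0$. The pole term contributes $\Xi(1+u)/(u\zeta(1+u))$, which is smooth at $u=0$. Differentiate $k$ times in real $u\downarrow0$ and expand $\frac{1+u}{\zeta(1+u)}$ by Leibniz. Each derivative of $\e^{-ut}$ produces a factor $(-t)^j$, so
\[
D_k=\int_0^\infty h(t)\mathcal K_k(t)\dd t+c_k .
\]
Here $\mathcal K_k$ is a polynomial in $t$ of degree $\le k-1$. Its top term comes from $\frac{d}{du}\frac{1+u}{\zeta(1+u)}\big|_0=1$ times $\binom{k}{1}(-t)^{k-1}$, giving $k(-t)^{k-1}$. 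The constant $c_k$ is an explicit term from the pole part. The interchange of the limit $u\downarrow0$ with the integral needs justification. Under a one-sided bound alone, $h$ is a priori only $O(1)$, since $s_1(n)\le n$, so the integral need not converge absolutely. We would handle this by splitting $h=h^+-h^-$: the bounded side has a convergent majorant, and the other side is controlled through monotone convergence, using that the limit $D_k$ exists (as established before the proposition). This is the delicate point, and it is the reason the statement is one-sided.

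\textbf{Step 2: kernel sign and the tail.} Write $\mathcal K_k(t)=k(-t)^{k-1}+\sum_{j<k-1}c_{k,j}t^j$. The coefficients satisfy $|c_{k,j}|\le k!\,A^{k}/j!$, since the Taylor coefficients of $1/\zeta(1+u)$ grow at most geometrically. Hence for $t\ge Ak$ the quantity $(-1)^{k-1}\mathcal K_k(t)$ is positive and at most $2kt^{k-1}$. On $[0,Ak]$ we have $|h|\le 1$ and $|\mathcal K_k|\le \e^{O(k)}k^{k}$, which is $\e^{o(k)}$ times something negligible compared with $(k^2/\log k)^k$.

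On $t\ge Ak$, the hypothesis $E\le CR_\alpha$ gives $h(t)\le C\exp\{-\alpha\sqrt{t\log t}\}(1+o(1))$. Since the kernel has the right sign there, this yields
\[
(-1)^{k-1}D_k\le \e^{O(k)}k^{k}+2Ck\,J_{k-1}(\beta),
\]
where we absorbed the difference between $\alpha$ and $\beta$, together with $\log t$ versus $\log\lfloor\cdot\rfloor$, into the exponent. The lower-bound case is symmetric.

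\textbf{Step 3: bounding $J_{k-1}(\beta)$.} Use the substitution $s=\beta\sqrt{t\log t}$. Alternatively, bound $\log t\ge(1-\varepsilon)\log(k^2/\log k)$ on the dominant range and compare with the gamma integral
\[
\int t^{k-1}\e^{-c\sqrt t}\dd t=\frac{2\Gamma(2k)}{c^{2k}},
\qquad c=\beta\sqrt{(2-\varepsilon)\log k}.
\]
This gives
\[
J_{k-1}(\beta)\le \e^{o(k)}\frac{(2k)^{2k}\e^{-2k}}{(\beta^2\cdot 2\log k)^{k}}=\e^{o(k)}\Big(\frac{2}{\beta^2}\Big)^k\Big(\frac{k^2}{\e^2\log k}\Big)^k .
\]
The range where $\log t$ is small contributes negligibly, by crude bounds. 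We expect this gamma-majorant step, with its uniform control of $\log t$, to be the main computational obstacle. The justification in Step 1 is the main conceptual one.
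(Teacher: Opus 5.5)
Your proposal is essentially the paper's proof: the same identity $D(1+u)-\Xi=\Phi(u)\mathcal M_E(u)$ with $\Phi(u)=(1+u)/\zeta(1+u)$, the same kernel $\mathcal K_k$ with leading term $k(-t)^{k-1}$ and fixed sign of $(-1)^{k-1}\mathcal K_k(t)$ for $t\geq A_0k$, the same $h_\pm$/monotone-convergence device for the one-sided moments, the trivial bound $\exp\{k\log k+O(k)\}$ on $[0,A_0k]$, and a gamma-integral majorant for $J_{k-1}(\beta)$ (the paper avoids your range splitting by using the global inequality $\sqrt{t\log t}\geq\sqrt{1+\log T}\,\sqrt t-\sqrt T$ with $T=k^2/\log k$). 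A few details need fixing: your $c_k$ vanishes for $k\geq1$, since $w\int_1^\infty\lfloor x\rfloor x^{-w-1}\dd x=\zeta(w)$ means the main terms combine to exactly $\Xi$ after division by $\zeta$; $1/\zeta(1+u)$ is holomorphic near $u=0$ but not entire; the coefficient bound must read $|c_{k,j}|\ll k!\,A^{k-j}/j!$, because $A^k$ would only give positivity for $t\gg A^kk$; and the monotone-convergence step should be run on $\mathcal M_E^{(m)}(u)=(-1)^m\int_0^\infty t^mh(t)\e^{-ut}\dd t$ rather than on $D^{(k)}(1+u)$, whose $u$-dependent kernel is not of one sign on $[A_0k,\infty)$ (the limits of $\mathcal M_E^{(m)}(u)$ as $u\downarrow0$ exist because $\mathcal M_E=(D(1+u)-\Xi)/\Phi(u)$, and $D(1+u)-\Xi$ is $C^\infty$ on $[0,\infty)$ and vanishes at $u=0$).
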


\begin{proposition}
\label{prop:Dk-lower}
There are sequences of positive integers
\(k_j^+,k_j^-\to\infty\) such that, for all sufficiently large \(j\),
\begin{align}
  (-1)^{k_j^+-1}D_{k_j^+}
  &\geq
  e^{o(k_j^+)}\left(
    \frac{(k_j^+)^2}{e^2\log k_j^+}
  \right)^{k_j^+},
  \label{eq:Dk-lower-positive}\\
  (-1)^{k_j^--1}D_{k_j^-}
  &\leq
  -e^{o(k_j^-)}\left(
    \frac{(k_j^-)^2}{e^2\log k_j^-}
  \right)^{k_j^-}.
  \label{eq:Dk-lower-negative}
\end{align}
\end{proposition}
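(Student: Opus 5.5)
We prove Proposition~\ref{prop:Dk-lower} by the route sketched in Section~\ref{sec:overview}. We approximate $G_p^{(k)}(0)$ by $f^{(k)}(0)$ uniformly over the primes carrying most of the mass of the weights $T_p(1)\mathcal L_p^k$, evaluate $f^{(k)}(0)$ by the saddle-point method, and then choose $k$ so that the oscillating phase has the desired sign.

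\emph{Step 1: weight distribution.} We first establish $\sum_p T_p(1)\mathcal L_p^k=(\e^{-\gamma}+o(1))\Gamma(k-1)$. Mertens' theorem gives $T_p(1)\sim \e^{-\gamma}/(p^2\log p)$. Partial summation with the prime number theorem then turns the sum into $\e^{-\gamma}\int t^{k-2}\e^{-t}\dd t$ in the variable $t=\log x$.

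The integrand $t^{k-2}\e^{-t}$ concentrates on $t=\log p\in[(1-\eta)k,(1+\eta)k]$. Outside this window the contribution is $\e^{-c\eta^2k}\Gamma(k-1)$. We must also use Lemma~\ref{lem:derivative-estimates}, with Cauchy bounds on a fixed-size disc, to bound $|G_p^{(k)}(0)|\le k!\,r^{-k}\max_{|z|=r}|G_p|$ there. This has to be absorbed against the $\e^{o(k)}$ slack, which is delicate: the main term has size $\Gamma(k)\,k!\,|Q_k|$, and $k!|Q_k|$ is roughly $(k/(\e\log k))^k$, so the tails need comparable control.

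\emph{Step 2: uniform approximation.} For $\log p\asymp k$, we want $G_p(z)=f(z)(1+o(1))$ uniformly on $|z|\le r_k:=|z_k|\sim\log k$. This is the content of Lemmas~\ref{lem:uniform-Gp} and~\ref{lem:derivative-approximation}: the error in $\log G_p-H$ comes from the PNT error term in $\sum_{q<p}(1-\e^{-z\log q/\mathcal L_p})/q$. For $|z|\le 2\log k$ the factors $\e^{-z\log q/\mathcal L_p}$ are at most $k^2$ in size, while the PNT remainder saves $\exp(-c\sqrt{\log p})=\exp(-c\sqrt k)$, so the error is $o(1)$.

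Cauchy's formula on $|z|=r_k$ then gives
\[
G_p^{(k)}(0)=f^{(k)}(0)+o\Bigl(k!\,r_k^{-k}\max_{|z|=r_k}|f|\Bigr).
\]
The saddle-point analysis shows $\max_{|z|=r_k}|f|\,r_k^{-k}$ is comparable to $|Q_k|$ up to a polynomial factor. More precisely, the maximum is attained near $z_k$, where $|f(z_k)|\approx \e^{\log k}\e^{k}$. To make the error genuinely $o(k!|Q_k|)$, we need the relative error to beat the Gaussian-width factor $\sqrt{k\log k}$. The PNT saving $\exp(-c\sqrt k)$ is ample for this.

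\emph{Step 3: saddle point and phase.} Proposition~\ref{prop:f-saddle} gives
\[
f^{(k)}(0)/k!=2|Q_k|(\cos\theta_k+o(1)),
\]
and Lemma~\ref{lem:phase-variation} supplies subsequences with $(-1)^{k-1}\cos\theta_k\to\pm1$. Combining the three steps yields Proposition~\ref{prop:Dk-asymptotic}:
\[
(-1)^{k-1}D_k=\e^{-\gamma}\Gamma(k-1)\,2k!\,|Q_k|\bigl((-1)^{k-1}\cos\theta_k+o(1)\bigr).
\]

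\emph{Step 4: size computation.} It remains to check that $\Gamma(k-1)k!|Q_k|=\e^{o(k)}(k^2/(\e^2\log k))^k$. Here $|z_k|^{-k}=(\log k)^{-k}\e^{o(k)}$, and $|f(z_k)|=|\e^{-z_k+\Ein(z_k)}|$. Using $\Ein(z)=\log z+\gamma+E_1(z)$-type asymptotics with $\e^{-z_k}\approx -k$, we get $|f(z_k)|=\e^{O(\log k)}\cdot \e^{k+O(1)}$. Combined with $\Gamma(k-1)k!=\e^{o(k)}(k/\e)^{2k}$, this gives
\[
\e^{o(k)}\,\frac{k^{2k}}{\e^{2k}}\cdot\frac{\e^{k}}{(\log k)^k}\,.
\]
The stated form $(k^2/(\e^2\log k))^k$ must then be reconciled by computing $|f(z_k)|$ carefully. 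Since $\Re(-z_k)=\log k$ contributes $\e^{\log k}$ only, I expect the $\e^{k}$ to cancel against the real part of $\Ein(z_k)\approx -\e^{-z_k}/z_k$-type terms. Verifying this exponent bookkeeping is the main obstacle. The tail control in Step~1, which ensures non-window primes do not swamp the main term, is secondary.
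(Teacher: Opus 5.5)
Your outline follows the paper's route: Proposition~\ref{prop:Dk-asymptotic} along the sequences of Lemma~\ref{lem:phase-variation}, then Stirling. As written, however, it does not close.

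\textbf{The size computation in Step~4.} This is the decisive gap. You leave it open, and your value of \(|f(z_k)|\) is wrong: there is no factor \(\e^{k}\). Write \(H(z)=-z+E_1(z)+\Log z+\gamma\). The terms \(\Re(-z_k)=x_k\) and \(\log r_k\) are only \(O(\log k)\). By the saddle equation, \(\e^{-z_k}=\e^{\xi_k}=1+\xi_k-k\), so \(E_1(z_k)\approx \e^{-z_k}/z_k\approx k/\xi_k\), whose real part is \((1+o(1))k/\log k\).

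This is \eqref{eq:H-at-saddle-real}: \(\Re H(z_k)=\frac{\e^{x_k}}{r_k}(1+O(1/r_k))\), so \(|f(z_k)|=\e^{O(k/\log k)}=\e^{o(k)}\). Equivalently, \eqref{eq:Qk-size} in Proposition~\ref{prop:f-saddle}, which you already cite for the cosine asymptotic, gives \(|Q_k|=(\log k)^{-k}\e^{O(k/\log k)}\).

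With \(\Gamma(k-1)\,k!=\e^{O(\log k)}(k/\e)^{2k}\), this yields \(\e^{-\gamma}\Gamma(k-1)\,2k!\,|Q_k|=\e^{o(k)}\bigl(k^2/(\e^2\log k)\bigr)^k\). Then \eqref{eq:Dk-asymptotic} with \((-1)^{k-1}\cos\theta_k\to\pm1\) gives both inequalities. That short deduction is the paper's entire proof; nothing needs to ``cancel''. Had your extra \(\e^k\) been real it would only have strengthened a lower bound, but it is not, so it cannot be used.

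\textbf{The tail strategy in Step~1.} This would fail, and the reasoning behind it is off. The conclusion is signed and the main term is \(2\e^{-\gamma}\Gamma(k-1)k!|Q_k|\cos\theta_k\). So every other contribution must be \(o(\Gamma(k-1)k!|Q_k|)\), or at least a small constant multiple of it. An error that is merely \(\e^{o(k)}\) times this can flip the sign, so nothing can be ``absorbed against the \(\e^{o(k)}\) slack''.

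Outside a window \(\mathcal L_p\in[(1-\eta)k,(1+\eta)k]\) the weights save only \(\e^{-c\eta^2k}\). A Cauchy bound on a disc of fixed radius \(r\) loses \((\log k/r)^k\) relative to \(k!|Q_k|\approx k!(\log k)^{-k}\e^{o(k)}\), which is superexponential. Lemma~\ref{lem:derivative-estimates} cannot be used here at all, since its constants depend on \(k\).

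The paper needs no window. Because \(r_k\le2\log(k/\log k)\), Lemmas~\ref{lem:uniform-Gp} and~\ref{lem:derivative-approximation} apply on \(|z|=r_k\) to every prime with \(\mathcal L_p\ge k/\log k\). Positivity of \(T_p(1)\mathcal L_p^k\) then lets one factor out \(f^{(k)}(0)\). The primes with \(\mathcal L_p<k/\log k\) carry total weight at most about \((k/\log k)^k\), a saving of \((\e/\log k)^k\) over \(\Gamma(k-1)\). That more than compensates for Cauchy's estimate on the circle of radius \(\min(\mathcal L_p/2,r_k)\) (Lemma~\ref{lem:smaller-primes}).

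\textbf{The error source in Step~2.} The relative error \(\Delta_p\) is not \(\exp(-c\sqrt k)\). Three terms keep it at \(O(\log\mathcal L_p/\mathcal L_p)\), i.e.\ \(O((\log k)^2/k)\) on the relevant range:
\begin{itemize}
\item the Mertens constant term \(C_1z/\mathcal L_p\);
\item the prime-power terms;
\item the factor \(\log(1+z/\mathcal L_p)\) from \(D_p\).
\end{itemize}
This still suffices, but only because it beats the ratio \(\sqrt{2\pi|B_k|}\asymp\sqrt{k\log k}\) between \(r_k^{-k}\max_{|z|=r_k}|f|\) and \(|Q_k|\).
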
        

\begin{proof}[Proof of Theorem~\ref{thm:main}, assuming
Propositions~\ref{prop:Dk-upper} and~\ref{prop:Dk-lower}]
Fix \(\alpha>\sqrt2\), and choose \(\beta\) with
\[
  \sqrt2<\beta<\alpha.
\]
% Choose \(\delta,\varepsilon>0\) sufficiently small that
% \begin{equation}\label{eq:separation-ratio}
%   \varrho:=\e^\varepsilon(1+\delta)\frac{2}{\beta^2}<1.
% \end{equation}

Suppose first that
\[
  \limsup_{n\to\infty}\frac{s_1(n)-\Xi n}{R_\alpha(n)}<\infty.
\]
Then \(s_1(n)-\Xi n\leq C R_\alpha(n)\) for some \(C>0\) and all sufficiently
large \(n\).  Apply \eqref{eq:Dk-upper-positive} with
\(k=k_j^+\), and compare it with \eqref{eq:Dk-lower-positive}.  The
ratio of the upper bound to the lower bound is at most
\[
  \left(\frac{e^{o(1)}2}{\beta^2}\right)^k,
\]
which tends to zero as $\beta>\sqrt{2}$.  This is impossible, and hence
\[
  \limsup_{n\to\infty}\frac{s_1(n)-\Xi n}{R_\alpha(n)}=\infty.
\]

Similarly, if
\[
  \liminf_{n\to\infty}\frac{s_1(n)-\Xi n}{R_\alpha(n)}> -\infty,
\]
then \(s_1(n)-\Xi n\geq-C R_\alpha(n)\) eventually.  Comparing
\eqref{eq:Dk-upper-negative} with \eqref{eq:Dk-lower-negative} along
\(k=k_j^-\) gives the same contradiction.  Therefore
\[
  \liminf_{n\to\infty}\frac{s_1(n)-\Xi n}{R_\alpha(n)}=-\infty.
\]
This proves the theorem.
\end{proof}
\section{Upper Bound on \texorpdfstring{\(D_k\)}{Dk}}\label{sec:upper-bound-Dk}

Put, for $t\geq 0$,
\[
  E(n):=s_1(n)-\Xi n,
  \qquad
  h(t):=\e^{-t}E(\lfloor\e^t\rfloor).
\]
Let \(\alpha>0\).  In this section we assume one of the two one-sided
bounds
\begin{equation}\label{eq:one-sided-error-bounds}
  E(n)\leq C R_\alpha(n)
  \qquad\text{or}\qquad
  E(n)\geq-C R_\alpha(n)
\end{equation}
for all sufficiently large \(n\), where \(C>0\) is fixed.  If
\(0<\beta<\alpha\), then comparison of
\(\log\lfloor\e^t\rfloor\) with \(t\) shows that, for all sufficiently
large \(t\), respectively,
\begin{equation}\label{eq:one-sided-h-bounds}
  h(t)\leq C_\beta\e^{-\beta\sqrt{t\log t}}
  \qquad\text{or}\qquad
  h(t)\geq-C_\beta\e^{-\beta\sqrt{t\log t}}.
\end{equation}
The strict inequality \(\beta<\alpha\) permits the replacement of
\(\log\lfloor\e^t\rfloor\) by \(t\) in the exponent.

For \(\Re z>0\), define
\begin{equation}\label{eq:Mellin-error-def}
  \mathcal M_E(z)
  :=
  \int_1^\infty E(\lfloor x\rfloor)x^{-z-2}\dd x
  =\int_0^\infty h(t)\e^{-zt}\dd t.
\end{equation}
From~\eqref{eq:dirichlet-convolution}, we see that the partial sums of the sequence \(b(n)-\Xi \) satisfy
\[
  \sum_{n\leq x}(b(n)-\Xi )
  =s_1(\lfloor x\rfloor)-\Xi\lfloor x\rfloor
  =E(\lfloor x\rfloor).
\]
For $\Re z>0$, partial summation and \eqref{eq:dirichlet-product} therefore give
\begin{equation*}\label{eq:centered-Mellin}
  \zeta(1+z)\bigl(D(1+z)-\Xi \bigr)
  =(1+z)\mathcal M_E(z).
\end{equation*}
Let \(Z(w)\) denote the holomorphic extension of \((w-1)\zeta(w)\) to a
neighborhood of \(w=1\). Choose \(0<r_0<1\) so that \(Z(1+z)\neq0\) for \(|z|<r_0\),
and put
\[
Q(z):=\frac{1+z}{Z(1+z)},\qquad \Phi(z):=zQ(z).
\]
Then \(Q\) is holomorphic on this disk and \(Q(0)=1\).
For \(|z|<r_0\) with \(\Re z>0\),
\begin{equation}\label{eq:factorized-Mellin}
D(1+z)-\Xi=\Phi(z)\mathcal M_E(z).
\end{equation}
%and put
% \[
%   Q(z):=\frac{1+z}{Z(1+z)},
%   \qquad
%   \Phi(z):=zQ(z).
% \]
% Then \(Q\) is holomorphic near \(0\), \(Q(0)=1\), and for $\Re z>0$,
% \begin{equation}\label{eq:factorized-Mellin}
%   D(1+z)-\Xi =\Phi(z)\mathcal M_E(z).
% \end{equation}

The next lemma explains why a one-sided hypothesis is sufficient.  It
will also justify all differentiations at \(z=0\).
\begin{lemma}
\label{lem:one-sided-moments}
Assume either of the bounds in \eqref{eq:one-sided-error-bounds}.  Then,
for every integer \(m\geq0\),
\begin{equation*}\label{eq:absolute-log-moments}
  \int_0^\infty t^m|h(t)|\dd t<\infty,
\end{equation*}
and \(\mathcal M_E\) has a \(C^\infty\) extension to \([0,\infty)\)
satisfying
\begin{equation*}\label{eq:Mellin-moment-identity}
  \mathcal M_E^{(m)}(0)
  =(-1)^m\int_0^\infty t^m h(t)\dd t.
\end{equation*}
\end{lemma}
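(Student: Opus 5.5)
We prove the lemma by the standard one-sided (Landau-type) argument. The key input is that the Dirichlet series side, \(D(1+x)\) for real \(x\), is smooth up to \(x=0\). A one-sided bound on \(h\) then forces absolute integrability of all moments.

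We treat the case \(h(t)\le C_\beta \e^{-\beta\sqrt{t\log t}}\) for \(t\ge t_0\), using \eqref{eq:one-sided-h-bounds}; the other case is identical after replacing \(h\) by \(-h\). Since \(E(n)=O(n)\) trivially (\(0\le s_1(n)\le n\)), \(h\) is bounded on \([0,t_0]\). Write \(h=h^+-h^-\) with \(h^\pm\ge0\). By hypothesis \(h^+(t)\le C_\beta\e^{-\beta\sqrt{t\log t}}\) for large \(t\), so \(\int_0^\infty t^m h^+(t)\dd t<\infty\) for every \(m\). The whole task is to control \(h^-\).

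First we show that \(\int_0^\infty h^-(t)\dd t<\infty\). For real \(x\in(0,r_0)\), \eqref{eq:factorized-Mellin} gives
\[
\mathcal M_E(x)=\frac{D(1+x)-\Xi}{xQ(x)}.
\]
Since \(x\mapsto D(1+x)\) is \(C^\infty\) on \([0,\infty)\) with \(D(1)=\Xi\) (Section~\ref{sec:prime-sum}), and \(Q\) is holomorphic with \(Q(0)=1\), the function \((D(1+x)-\Xi)/x\) extends smoothly to \([0,r_0)\). Thus \(\mathcal M_E(x)\) stays bounded as \(x\downarrow0\). Now
\[
\int_0^\infty h^-(t)\e^{-xt}\dd t=\int_0^\infty h^+(t)\e^{-xt}\dd t-\mathcal M_E(x)\le \int_0^\infty h^+(t)\dd t+O(1).
\]
Monotone convergence as \(x\downarrow0\) then gives \(\int_0^\infty h^-<\infty\), hence \(h\in L^1\).

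For higher moments we induct on \(m\). The function \(g(x):=(D(1+x)-\Xi)/(x Q(x))\) is \(C^\infty\) on \([0,r_0)\) and agrees with \(\mathcal M_E(x)\) for \(x>0\). Suppose \(\int t^{j}|h|<\infty\) for all \(j\le m-1\). For \(x>0\) we may differentiate under the integral, since \(\e^{-xt}\) decays, to get
\[
(-1)^m\mathcal M_E^{(m)}(x)=\int_0^\infty t^m h(t)\e^{-xt}\dd t=\int t^m h^+\e^{-xt}-\int t^m h^-\e^{-xt}.
\]
The left side equals \((-1)^m g^{(m)}(x)\), which is bounded as \(x\downarrow0\), and the \(h^+\) term is bounded by \(\int t^mh^+\). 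So \(\int t^m h^-\e^{-xt}\dd t\) is bounded uniformly in \(x\), and monotone convergence again gives \(\int t^m h^-<\infty\). (The induction hypothesis is not even needed; each \(m\) works directly. Stating it inductively only keeps the differentiation under the integral visibly legitimate.)

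Finally, with all absolute moments finite, dominated convergence shows that \(x\mapsto\int_0^\infty h(t)\e^{-xt}\dd t\) is \(C^\infty\) on \([0,\infty)\), with \(m\)-th derivative \((-1)^m\int t^m h(t)\e^{-xt}\dd t\). Setting \(x=0\) gives the stated moment identity, and this extension coincides with \(g\) near \(0\).

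The main obstacle is the positivity step: a one-sided bound alone says nothing about \(h^-\). It is the boundedness of \(\mathcal M_E^{(m)}(x)\) as \(x\downarrow0\), inherited through \eqref{eq:factorized-Mellin} from the smooth boundary extension of \(D\), combined with monotone convergence, that converts the one-sided hypothesis into absolute integrability. Care is needed that the division by \(x\) in \(g\) is harmless; this holds because \(D(1)=\Xi\) exactly and \(D(1+x)\) is \(C^\infty\) up to \(0\), so \(g(x)=\int_0^1 D'(1+sx)\dd s/Q(x)\) is smooth.
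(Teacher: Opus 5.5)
Your proposal is correct and takes essentially the same route as the paper. Both arguments extend \(\mathcal M_E\) smoothly to \(0\) via \((D(1+x)-\Xi)/(xQ(x))=\frac{1}{Q(x)}\int_0^1 D'(1+sx)\dd s\), use the one-sided hypothesis to control the moments of \(h^+\), apply monotone convergence to get finiteness of the moments of \(h^-\), and finish with dominated convergence for the moment identity. The induction is indeed unnecessary, and the trivial bound \(|E(n)|\ll n\), which you use implicitly to differentiate under the integral for \(x>0\), actually makes \(h\) bounded on all of \([0,\infty)\), not just on \([0,t_0]\).
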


% \begin{proof}
% The trivial bound \(|E(n)|\ll n\) shows that \(h\) is bounded.  Hence,
% for \(z>0\), differentiation under the integral in
% \eqref{eq:Mellin-error-def} gives
% \[
%   \mathcal M_E^{(m)}(z)
%   =(-1)^m\int_0^\infty t^m h(t)\e^{-zt}\dd t.
% \]
% By \eqref{eq:density-constant} and the definition of \(T_p\),
% \(D_0=\sum_pT_p(1)=\Xi\).  The function \(D(1+z)-\Xi\) therefore vanishes
% at \(z=0\), and \eqref{eq:def-Dk} shows that it has
% right derivatives of every order there.  Since \(\Phi(z)=zQ(z)\) and
% \(Q(0)=1\), the quotient on the right of
% \[
%   \mathcal M_E(z)=\frac{D(1+z)-\Xi}{\Phi(z)}
% \]
% extends to a \(C^\infty\) function at \(z=0\).  In particular,
% \(\mathcal M_E^{(m)}(z)\) has a finite limit as \(z\downarrow0\).

% Suppose first that the upper bound in \eqref{eq:one-sided-h-bounds}
% holds.  Then every moment of the positive part \(h_+ := \max\{h, 0\}\) is finite.  If
% \(h_-:=\max\{-h,0\}\), the identity above gives
% \[
%   \int_0^\infty t^m h_-(t)\e^{-zt}\dd t
%   =
%   \int_0^\infty t^m h_+(t)\e^{-zt}\dd t
%   -(-1)^m\mathcal M_E^{(m)}(z).
% \]
% The right-hand side has a finite limit as \(z\downarrow0\).  Monotone
% convergence therefore shows that the \(m\)-th moment of \(h_-\) is
% finite.  This proves \eqref{eq:absolute-log-moments} and then
% \eqref{eq:Mellin-moment-identity} by dominated convergence.  Under the
% lower bound in \eqref{eq:one-sided-h-bounds}, the same argument is
% applied with \(h_+\) and \(h_-\) interchanged.
% \end{proof}
\begin{proof}
The trivial estimate \(|E(n)|\ll n\) gives
\[
  |h(t)|=e^{-t}|E(\lfloor e^t\rfloor)|\ll1.
\]
Consequently, for every \(z>0\) and every integer \(m\geq0\), we may
differentiate under the integral in~\eqref{eq:Mellin-error-def}, obtaining
\[
  \mathcal M_E^{(m)}(z)
  =(-1)^m\int_0^\infty t^m h(t)e^{-zt}\,dt.
\]
Indeed, locally uniformly for \(z>0\), the integrand is dominated by
a constant multiple of \(t^m e^{-ct}\) for some \(c>0\).

By the discussion following Lemma~\ref{lem:derivative-estimates}, the function
\[
  F(x):=D(1+x)-\Xi,\qquad x\geq0,
\]
belongs to \(C^\infty([0,\infty))\), and \(F(0)=0\). Since
\(\Phi(x)=xQ(x)\) and \(Q(0)=1\), after restricting to a sufficiently
small neighborhood of \(0\) we have \(Q(x)\neq0\). Moreover,
\begin{equation}\label{eq:F-over-x}
  \frac{F(x)}x=\int_0^1F'(ux)\,du
  \qquad (x>0).
\end{equation}
The right-hand side defines a \(C^\infty\) function also at \(x=0\);
indeed, for every \(r\geq0\),
\[
  \frac{d^r}{dx^r}\int_0^1F'(ux)\,du
  =\int_0^1u^rF^{(r+1)}(ux)\,du.
\]
It follows that
\[
  \widetilde{\mathcal M}_E(x)
  :=\frac1{Q(x)}\int_0^1F'(ux)\,du
\]
is \(C^\infty\) near \(0\). By~\eqref{eq:factorized-Mellin} and~\eqref{eq:F-over-x}, it agrees with \(\mathcal M_E(x)\)
for \(x>0\), and hence gives a \(C^\infty\) extension of \(\mathcal M_E\) to
\(x=0\). In particular, \(\mathcal M_E^{(m)}(z)\) has a finite limit as
\(z\downarrow0\).

Suppose first that the upper bound in~\eqref{eq:one-sided-h-bounds} holds. Put
\[
  h_+(t):=\max\{h(t),0\},
  \qquad
  h_-(t):=\max\{-h(t),0\}.
\]
Then, for all sufficiently large \(t\),
\[
  0\leq h_+(t)
  \leq C_\beta e^{-\beta\sqrt{t\log t}}.
\]
Since the function on the right has finite polynomial moments, we have
\[
  \int_0^\infty t^m h_+(t)\,dt<\infty.
\]
For \(z>0\),
\[
  \int_0^\infty t^m h_-(t)e^{-zt}\,dt
  =
  \int_0^\infty t^m h_+(t)e^{-zt}\,dt
  -(-1)^m\mathcal M_E^{(m)}(z).
\]
The right-hand side has a finite limit as \(z\downarrow0\).
Monotone convergence on the left therefore gives
\[
  \int_0^\infty t^m h_-(t)\,dt<\infty.
\]
Hence
\[
  \int_0^\infty t^m|h(t)|\,dt<\infty.
\]
Dominated convergence now yields
\[
  \mathcal M_E^{(m)}(0)
  =\lim_{z\downarrow0}\mathcal M_E^{(m)}(z)
  =(-1)^m\int_0^\infty t^mh(t)\,dt.
\]

Under the lower bound in~\eqref{eq:one-sided-h-bounds}, the same argument applies with
\(h_+\) and \(h_-\) interchanged.
\end{proof}

For \(k\geq1\), define
\begin{equation*}\label{eq:polynomial-weight}
  \mathcal{K}_k(t)
  :=
  \left.\frac{\dd^k}{\dd z^k}
    \bigl(\Phi(z)\e^{-zt}\bigr)\right|_{z=0}.
\end{equation*}
The following identity expresses \(D_k\) in terms of the Mellin transform
\(\mathcal M_E\) of \(h(t)\) and the polynomial weight
\(\mathcal{K}_k(t)\).  This representation will allow us to exploit the
bounds on \(h(t)\) in \eqref{eq:one-sided-h-bounds}.
\begin{lemma}
\label{lem:integral-representation}
Under either hypothesis in \eqref{eq:one-sided-error-bounds}, for every
\(k\geq1\),
\begin{equation}\label{eq:integral-representation}
  D_k=\int_0^\infty h(t)\mathcal{K}_k(t)\dd t.
\end{equation}
Moreover, \(\mathcal{K}_k\) is a polynomial of degree at most \(k-1\).
\end{lemma}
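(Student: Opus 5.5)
The plan is to differentiate the factorization \eqref{eq:factorized-Mellin} $k$ times at $z=0$ (as right derivatives on $[0,\infty)$) and then substitute the moment identity from Lemma~\ref{lem:one-sided-moments}.

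First, the left side. By the discussion following Lemma~\ref{lem:derivative-estimates}, $x\mapsto D(1+x)-\Xi$ is $C^\infty$ on $[0,\infty)$, and its $k$-th right derivative at $0$ is $D_k$ for $k\ge1$; the constant $\Xi$ disappears after one differentiation.

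Second, the right side. On $[0,r_0)$, $\Phi$ is the restriction of a holomorphic function, hence smooth. By Lemma~\ref{lem:one-sided-moments}, $\mathcal M_E$ extends to a $C^\infty$ function on $[0,\infty)$. The identity \eqref{eq:factorized-Mellin} holds for real $0<x<r_0$. Differentiating $k$ times and letting $x\downarrow0$ is legitimate because both sides are $C^\infty$ up to the endpoint. Leibniz's rule then gives
\[
  D_k=\sum_{m=0}^k\binom km\Phi^{(k-m)}(0)\,\mathcal M_E^{(m)}(0).
\]
Lemma~\ref{lem:one-sided-moments} gives $\mathcal M_E^{(m)}(0)=(-1)^m\int_0^\infty t^mh(t)\dd t$, and each of these integrals converges absolutely. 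The sum is finite, so it can be moved inside a single integral:
\[
  D_k=\int_0^\infty h(t)\sum_{m=0}^k\binom km\Phi^{(k-m)}(0)(-t)^m\dd t.
\]
Applying Leibniz's rule to $\Phi(z)\e^{-zt}$ at $z=0$ shows that the inner sum is exactly $\mathcal K_k(t)$. This proves \eqref{eq:integral-representation}.

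Third, the degree claim. Since $\Phi(0)=0$, the $m=k$ term, $\Phi(0)(-t)^k$, vanishes. Every remaining term has degree at most $k-1$ in $t$, so $\mathcal K_k$ is a polynomial of degree at most $k-1$. In fact its $t^{k-1}$ coefficient is $k\Phi'(0)(-1)^{k-1}=k(-1)^{k-1}$, because $\Phi'(0)=Q(0)=1$.

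I expect no serious obstacle here. The only delicate point is justifying differentiation at the boundary point $z=0$. That is handled entirely by the $C^\infty$ extensions already established, so the identity can be obtained for $x>0$ and then passed to the limit $x\downarrow0$.
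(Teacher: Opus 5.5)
Your proposal is correct and follows the paper's proof essentially step for step. Both sides of \eqref{eq:factorized-Mellin} extend to $C^\infty$ functions at $z=0$, so their $k$-th right derivatives agree. Leibniz's rule and the moment identity of Lemma~\ref{lem:one-sided-moments} then give a finite sum of absolutely convergent integrals, which you combine into $\int_0^\infty h(t)\mathcal K_k(t)\dd t$, and $\Phi(0)=0$ removes the degree-$k$ term. Your extra remark that the $t^{k-1}$ coefficient is $(-1)^{k-1}k$ is also correct, and it is exactly the fact the paper uses in Lemma~\ref{lem:kernel-sign}.
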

\begin{proof}
By Lemma~\ref{lem:one-sided-moments}, \(\mathcal M_E\), regarded as a
function of the real variable \(z\geq0\), has a \(C^\infty\) extension to
\(z=0\). By the discussion following Lemma~\ref{lem:derivative-estimates},
the same is true of \(D(1+z)\), and
\[
  \left.\frac{d^k}{dz^k}D(1+z)\right|_{z=0+}=D_k.
\]
Hence both sides of~\eqref{eq:factorized-Mellin} have \(C^\infty\) extensions to \(z=0\).
Since they agree for \(z>0\), their \(k\)-th right derivatives at
\(z=0\) agree. Thus, for \(k\geq1\),
\[
  D_k
  =
  \sum_{j=0}^k
  \binom{k}{j}
  \Phi^{(j)}(0)\mathcal M_E^{(k-j)}(0).
\]
By Lemma~\ref{lem:one-sided-moments},
\[
  \mathcal M_E^{(k-j)}(0)
  =
  (-1)^{k-j}
  \int_0^\infty t^{k-j}h(t)\,dt.
\]
All these integrals are absolutely convergent, so the finite sum may
be moved inside the integral. Therefore
\[
  D_k
  =
  \int_0^\infty h(t)
  \left\{
    \sum_{j=0}^k
    \binom{k}{j}
    \Phi^{(j)}(0)(-t)^{k-j}
  \right\}\,dt
  =
  \int_0^\infty h(t)\mathcal{K}_k(t)\,dt.
\]
Finally,
\[
  \mathcal{K}_k(t)
  =
  \sum_{j=0}^k
  \binom{k}{j}
  \Phi^{(j)}(0)(-t)^{k-j}.
\]
Since \(\Phi(0)=0\), the term with \(j=0\), which would have degree
\(k\), vanishes. Hence \(\mathcal{K}_k\) has degree at most \(k-1\).
\end{proof}

% \begin{proof}
% By Lemma~\ref{lem:one-sided-moments}, every polynomial moment of
% \(|h|\) is finite.  We may therefore differentiate
% \eqref{eq:factorized-Mellin} \(k\) times, use
% \eqref{eq:Mellin-error-def}, and let \(z\downarrow0\).  Dominated
% convergence gives the right-hand side of
% \eqref{eq:integral-representation}, while
% \eqref{eq:def-Dk} gives \(D_k\) on the left.  Finally,
% Leibniz's rule gives
% \[
%   \mathcal{K}_k(t)
%   =\sum_{j=0}^k\binom{k}{j}\Phi^{(j)}(0)(-t)^{k-j}.
% \]
% Since \(\Phi(0)=0\), the term of degree \(k\) vanishes.
% \end{proof}

The zero of \(\Phi\) at the origin also fixes the eventual sign of the
kernel.

\begin{lemma}
\label{lem:kernel-sign}
There are constants \(A_0,C_0>0\) such that, for every \(k\geq1\) and
\(t\geq A_0k\),
\begin{equation*}\label{eq:kernel-sign-bound}
  0\leq(-1)^{k-1}\mathcal{K}_k(t)
  \leq C_0k t^{k-1}.
\end{equation*}
\end{lemma}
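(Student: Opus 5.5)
We start from the explicit formula in the proof of Lemma~\ref{lem:integral-representation},
\[
  \mathcal{K}_k(t)=\sum_{j=1}^k\binom{k}{j}\Phi^{(j)}(0)(-t)^{k-j},
\]
where the $j=0$ term is absent because $\Phi(0)=0$. The plan is to isolate the $j=1$ term as the dominant one and show that, once $t$ is a large multiple of $k$, all remaining terms are geometrically smaller. Since $\Phi(z)=zQ(z)$ with $Q(0)=1$, we have $\Phi'(0)=1$. Hence the $j=1$ term is $k(-t)^{k-1}$, and
\[
  (-1)^{k-1}\mathcal{K}_k(t)=k t^{k-1}\Bigl(1+\sum_{j=2}^k\frac{\binom{k}{j}}{k}\,\Phi^{(j)}(0)(-1)^{j-1}t^{1-j}\Bigr).
\]
It therefore suffices to show that the bracketed sum has absolute value at most $1/2$ whenever $t\geq A_0k$. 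This gives both the nonnegativity and the upper bound with $C_0=3/2$.

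The key input is a Cauchy bound on the Taylor coefficients of $\Phi$. The function $\Phi$ is holomorphic on the disk $|z|<r_0$ with $r_0<1$, so on the circle $|z|=r_0/2$ we have $|\Phi|\leq M$ for some constant $M$. Cauchy's estimate then gives
\[
  |\Phi^{(j)}(0)|\leq M\, j!\,(2/r_0)^j .
\]
We combine this with the elementary bound $\binom{k}{j}j!=k(k-1)\cdots(k-j+1)\leq k^j$. The $j$-th summand is then bounded by
\[
  \frac{k^j}{k}\,M\Bigl(\frac{2}{r_0}\Bigr)^j t^{1-j}
  = M\,\frac{2}{r_0}\Bigl(\frac{2k}{r_0 t}\Bigr)^{j-1}.
\]
For $t\geq A_0k$ the ratio $2k/(r_0t)$ is at most $2/(r_0A_0)$. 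Summing the resulting geometric series over $j\geq2$ gives a total of at most
\[
  \frac{2M}{r_0}\cdot\frac{2/(r_0A_0)}{1-2/(r_0A_0)},
\]
which is below $1/2$ once $A_0$ is chosen large enough in terms of $M$ and $r_0$ alone. This choice is uniform in $k\geq1$. The case $k=1$ is trivial, since then $\mathcal{K}_1=\Phi'(0)=1$.

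No step here is a real obstacle. The only point needing care is the uniformity in $k$. This is secured by the falling-factorial bound $k(k-1)\cdots(k-j+1)\leq k^j$, which cancels exactly the $j!$ growth of the Cauchy bound, together with the condition that $t$ be proportional to $k$. The constants $A_0$ and $C_0$ depend only on $\Phi$, and hence only on $\zeta$ near $1$.
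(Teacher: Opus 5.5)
Your proposal is correct and follows essentially the same route as the paper: you isolate the $j=1$ term $k t^{k-1}$ using $\Phi'(0)=1$, bound the remaining Leibniz terms by Cauchy's estimate on a fixed circle (your radius $r_0/2$ plays the role of the paper's $R$), and sum a geometric series in $k/(Rt)$ that is at most $1/2$ once $A_0$ is large. Your explicit bound $\binom{k}{j}j!\le k^j$ simply spells out the step the paper leaves implicit.
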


\begin{proof}
Since \(\Phi'(0)=Q(0)=1\), Leibniz's rule gives
\[
  (-1)^{k-1}\mathcal{K}_k(t)
  =kt^{k-1}
   +\sum_{j=2}^k\binom{k}{j}\Phi^{(j)}(0)(-1)^{j-1}t^{k-j}.
\]
Choose \(R>0\) such that \(\Phi\) is holomorphic on a neighborhood of
\(|z|\leq R\), and let \(M=\max_{|z|=R}|\Phi(z)|\).  Cauchy's estimate
gives \(|\Phi^{(j)}(0)|\leq Mj!R^{-j}\), and hence the absolute value of
the sum over \(j\geq2\), divided by \(kt^{k-1}\), is at most
\[
  \frac{M}{R}\sum_{m\geq1}\left(\frac{k}{Rt}\right)^m.
\]
Taking \(A_0\) sufficiently large makes this quantity at most
\(1/2\).  The assertion follows, after enlarging the upper constant.
\end{proof}

For \(\beta>0\), put
\[
  J_m(\beta):=\int_1^\infty
  t^m\e^{-\beta\sqrt{t\log t}}\dd t.
\]

\begin{lemma}
\label{lem:J-upper}
Let \(\beta>0\).  For all sufficiently large
\(k\),
\begin{equation}\label{eq:J-upper}
  J_{k-1}(\beta)
  \leq 
  \e^{o_\beta(k)}
  \left(
    \frac{2}{\beta^2\e^2}\frac{k^2}{\log k}
  \right)^k.
\end{equation}
\end{lemma}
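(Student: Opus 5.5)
The plan is to prove \eqref{eq:J-upper} with a gamma-integral majorant. We bound the factor \(\e^{-\beta\sqrt{t\log t}}\) from above by an expression whose moments are explicit gamma functions, and we accept a loss of \(\e^{o(k)}\). The integrand \(t^{k-1}\e^{-\beta\sqrt{t\log t}}\) peaks where \(\beta\sqrt{t\log t}\approx 2k\), that is, at
\[
  t\approx t_k:=\frac{4k^2}{\beta^2\log t_k}\approx\frac{2k^2}{\beta^2\log k}.
\]
So \(\log t=(2+o(1))\log k\) in the relevant range. At the peak the integrand is about \(t_k^{k}\e^{-2k}=(2k^2/(\beta^2\e^2\log k))^k\e^{o(k)}\), which matches the claim. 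The proof therefore splits \([1,\infty)\) at a small and at a large threshold and handles the middle range by freezing the logarithm.

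Fix a small \(\varepsilon>0\) and put \(L:=(2-\varepsilon)\log k\). On the range \(t\geq k^{2-\varepsilon}\) we have \(\log t\geq L\), hence \(\e^{-\beta\sqrt{t\log t}}\leq \e^{-\beta\sqrt{L}\sqrt t}\). Substituting \(u=\beta\sqrt{L}\sqrt t\) gives
\[
  \int_0^\infty t^{k-1}\e^{-\beta\sqrt{L t}}\dd t
  =\frac{2\,\Gamma(2k)}{(\beta^2L)^{k}}.
\]
By Stirling, \(\Gamma(2k)=\e^{o(k)}(2k/\e)^{2k}\). This yields
\[
  \e^{o(k)}\left(\frac{4k^2}{\e^2\beta^2(2-\varepsilon)\log k}\right)^k
  =\e^{o(k)}\left(\frac{2}{2-\varepsilon}\right)^k\left(\frac{2k^2}{\beta^2\e^2\log k}\right)^k .
\]
On the complementary range \(1\leq t\leq k^{2-\varepsilon}\), the trivial bound \(t^{k-1}\leq k^{(2-\varepsilon)k}\) suffices. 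This contribution is at most \(k^{(2-\varepsilon)k}\cdot k^{2}\), which is smaller than the target \((ck^2/\log k)^k\) by a factor \(k^{-\varepsilon k+o(k)}\). It is therefore negligible for large \(k\).

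Combining the two ranges gives \(J_{k-1}(\beta)\leq \e^{o(k)}(1-\varepsilon/2)^{-k}(2k^2/(\beta^2\e^2\log k))^k\) for every fixed \(\varepsilon\). The main obstacle is to upgrade the factor \((1-\varepsilon/2)^{-k}\) to \(\e^{o(k)}\). I would do this by letting \(\varepsilon=\varepsilon_k\to0\) slowly, for instance \(\varepsilon_k=1/\sqrt{\log k}\). The small-range bound then becomes \(k^{(2-\varepsilon_k)k}\) against the target \(k^{2k}(\log k)^{-k}C^k\). The ratio is \(\exp\{-\varepsilon_k k\log k+k\log\log k+O(k)\}\), which tends to zero because \(\varepsilon_k\log k=\sqrt{\log k}\gg\log\log k\). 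The loss factor \((1-\varepsilon_k/2)^{-k}=\e^{O(k/\sqrt{\log k})}=\e^{o(k)}\). The Stirling error terms and the constant factor \(2\) are absorbed into \(\e^{o_\beta(k)}\), which completes the plan.
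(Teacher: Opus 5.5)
Your proof is correct, and its core is the paper's: majorize \(\e^{-\beta\sqrt{t\log t}}\) by \(\e^{-\beta\sqrt{Lt}}\) with \(L\) close to \(2\log k\), evaluate the gamma integral \(2\Gamma(2k)/(\beta^2L)^k\), and apply Stirling. The difference is how you handle the range where \(\log t<L\).

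The paper does not split the range. It uses the tangent-line inequality \(t\log t\geq(1+\log T)t-T\), which follows from \(\log x\leq x-1\), to get \(\sqrt{t\log t}\geq\sqrt{1+\log T}\,\sqrt t-\sqrt T\) for all \(t\geq1\). A single global majorant then works, at the cost of a factor \(\e^{\beta\sqrt T}\). With \(T=k^2/\log k\) this factor is \(\e^{O(k/\sqrt{\log k})}\), and \(1+\log T=2\log k-\log\log k+1\) plays exactly the role of your \(L\).

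You instead use monotonicity of \(\log t\) on \([k^{2-\varepsilon_k},\infty)\) and the trivial bound \(t^{k-1}\leq k^{(2-\varepsilon_k)(k-1)}\) on \([1,k^{2-\varepsilon_k}]\). This forces \(\varepsilon_k\to0\) at a controlled rate. You need \(\varepsilon_k\log k\) to dominate \(\log\log k\) so the lower range is negligible, and \(k\varepsilon_k=o(k)\) so the loss \((1-\varepsilon_k/2)^{-k}\) is \(\e^{o(k)}\). Your choice \(\varepsilon_k=1/\sqrt{\log k}\) meets both.

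The paper's version is a one-line inequality with no auxiliary parameter sequence. Yours is slightly more elementary and makes explicit the two competing constraints on the level at which \(\log t\) is frozen. Either suffices for Proposition~\ref{prop:Dk-upper}.
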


\begin{proof}
For \(t,T\geq1\), the inequality
\(\log(T/t)\leq T/t-1\) gives
\[
  (1+\log T)t\leq t\log t+T,
\]
and consequently
\[
  \sqrt{t\log t}
  \geq\sqrt{1+\log T}\sqrt t-\sqrt T.
\]
It follows that
\begin{align}
  J_{k-1}(\beta)
  &\leq
  \e^{\beta\sqrt T}
  \int_0^\infty t^{k-1}
  \e^{-\beta\sqrt{1+\log T}\sqrt t}\dd t \notag\\
  &=
  \e^{\beta\sqrt T}
  \frac{2(2k-1)!}{\{\beta^2(1+\log T)\}^k}.
  \label{eq:gamma-majorant}
\end{align}
Taking \(T=k^2/\log k\) and using Stirling's upper bound

% \[
%   \beta\sqrt T=o(k),
%   \qquad
%   1+\log T=2\log k-\log\log k+1,
% \]
% and
% \[
%   \frac{2\log k}{1+\log T}\longrightarrow1.
% \]
% Stirling's upper bound now gives, for all sufficiently large \(k\),
\begin{align*}
  2(2k-1)!&\leq
  \e^{o(k)}\left(\frac{2k}{\e}\right)^{2k}
  % \e^{\beta\sqrt T}&\leq\e^{\varepsilon k/3},\\
  % \left(\frac{2\log k}{1+\log T}\right)^k
  % &\leq\e^{\varepsilon k/3}.
\end{align*}
in \eqref{eq:gamma-majorant} proves
\eqref{eq:J-upper}.
\end{proof}
\begin{remark}
A more precise oscillation result should be obtainable by carrying the analysis of the integral defining $J_m(\beta)$ to further orders, for instance by a more careful saddle-point analysis. This would introduce explicit second-order corrections to the oscillation scale and allow the constant in the exponential to approach the critical value $\sqrt{2}$ more quantitatively. We do not pursue such refinements here, since they considerably complicate the analysis while not resolving the main remaining issue: the endpoint corresponding to the constant $\sqrt{2}$ itself. 
\end{remark}

We now have all the ingredients to prove Proposition~\ref{prop:Dk-upper}.
\begin{proof}[Proof of Proposition~\ref{prop:Dk-upper}]
Put
\[
  \mathcal L_k(t):=(-1)^{k-1}\mathcal{K}_k(t),
  \qquad T:=A_0k,
\]
where \(A_0\) is supplied by Lemma~\ref{lem:kernel-sign}.  On
\([0,T]\), Cauchy's estimate on a fixed circle \(|z|=R\) gives
\[
  |\mathcal{K}_k(t)|
  \leq \frac{k!}{R^k}
  \max_{|z|=R}|\Phi(z)|\e^{Rt}.
\]
Since \(h\) is bounded, the contribution of the interval $[0, T]$ to
\eqref{eq:integral-representation} satisfies
\begin{equation*}\label{eq:initial-kernel-range}
  \left|\int_0^T h(t)\mathcal L_k(t)\dd t\right|\leq \exp\{k\log k+O(k)\}.
\end{equation*}
This is smaller than the right-hand side in either conclusion of
Proposition~\ref{prop:Dk-upper} for all sufficiently large \(k\).

Suppose first that \(E(n)\leq C R_\alpha(n)\) eventually.  By
\eqref{eq:one-sided-h-bounds} and
Lemma~\ref{lem:kernel-sign},
\begin{align*}
  (-1)^{k-1}D_k
  &\leq
  \left|\int_0^T h(t)\mathcal L_k(t)\dd t\right|
  +C_{\beta}C_0k
   \int_T^\infty t^{k-1}
   \e^{-\beta\sqrt{t\log t}}\dd t\\
  &\leq
  \exp\{k\log k+O(k)\}
  +C_{\beta}C_0kJ_{k-1}(\beta).
\end{align*}
Lemma~\ref{lem:J-upper}, with a harmless enlargement of the constant
and of the polynomial factor, proves the first assertion of
Proposition~\ref{prop:Dk-upper}.  If
\(E(n)\geq-C R_\alpha(n)\) eventually, the same argument applied to
\(-h\) gives the second assertion.
\end{proof}
\section{Saddle-Point Analysis of the Limiting Function}\label{sec:saddle-analysis}

Define the complementary exponential integral and the limiting entire
function by
\begin{equation*}\label{eq:limiting-entire-function}
  \Ein(z)
  =\sum_{m\geq1}\frac{(-1)^{m+1}z^m}{m\,m!}
  =\int_0^1\frac{1-\e^{-zt}}{t}\dd t,
  \qquad
  f(z):=\exp\{-z+\Ein(z)\}.
\end{equation*}
At \(t=0\), the integrand is interpreted by continuity. Put
\[
  H(z):=-z+\Ein(z),
\]
so that \(f(z)=\e^{H(z)}\).

We will see in the next section that $G_p(z)$ can be approximated by $f(z)$ for large $p$. In this section we analyze the function $f(z)$ in detail, and in particular we will obtain saddle-point estimates for $f(z)$ and its derivatives.

We first record the exponential-integral formulas used below. Let
\[
  \Omega:=\C\setminus (-\infty,0],
\]
and let
\[
  \Log z:=\log|z|+i\arg z,
  \qquad -\pi<\arg z<\pi,
\]
denote the principal logarithm. For \(z\in\Omega\), define
\begin{equation}\label{eq:E1-definition}
  E_1(z):=\e^{-z}\int_0^\infty\frac{\e^{-t}}{t+z}\dd t.
\end{equation}
In what follows, let $\gamma$ denote the Euler–Mascheroni constant.
\begin{lemma}
\label{lem:E1-formulas}
For \(z\in\Omega\),
\begin{equation}\label{eq:E1-Ein-connection}
  E_1(z)=\Ein(z)-\Log z-\gamma.
\end{equation}
If \(\pi/2\leq|\arg z|<\pi\), then
\begin{equation*}\label{eq:E1-three-term-expansion}
  E_1(z)
  =
  \frac{\e^{-z}}{z}
  \left(
    1-\frac1z+\frac{2}{z^2}
  \right)
  +R_3(z),
\end{equation*}
where
\begin{equation}\label{eq:E1-three-term-remainder-bound}
  |R_3(z)|
  \leq
  \frac{6\e^{-\Re z}}
  {|z|^4|\sin(\arg z)|}.
\end{equation}
\end{lemma}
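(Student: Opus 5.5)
We treat the two assertions of Lemma~\ref{lem:E1-formulas} separately. For the connection formula \eqref{eq:E1-Ein-connection}, both sides are holomorphic on \(\Omega\). On the left, \(E_1\) is holomorphic because the integral in \eqref{eq:E1-definition} converges locally uniformly for \(z\in\Omega\): the denominator satisfies \(|t+z|\geq \operatorname{dist}(z,(-\infty,0])>0\). On the right, \(\Ein\) is entire and \(\Log\) is holomorphic on \(\Omega\). By the identity theorem it therefore suffices to check \eqref{eq:E1-Ein-connection} for real \(z=x>0\).

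For \(x>0\), substitute \(u=t+x\) to get \(E_1(x)=\int_x^\infty \e^{-u}u^{-1}\dd u\). Next write \(\Ein(x)=\int_0^x(1-\e^{-u})u^{-1}\dd u\) (rescale \(t\mapsto u/x\) in the integral form of \(\Ein\)). Then
\[
\Ein(x)-\log x-E_1(x)=\int_0^1\frac{1-\e^{-u}}{u}\dd u-\int_1^\infty\frac{\e^{-u}}{u}\dd u .
\]
This quantity is independent of \(x\): its derivative is \((1-\e^{-x})/x-1/x+\e^{-x}/x=0\). It then remains to recall the classical evaluation of this constant as \(\gamma\). One way is to integrate by parts in \(\Gamma'(1)=\int_0^\infty \e^{-u}\log u\dd u=-\gamma\), splitting at \(u=1\). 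I would cite this standard fact rather than reprove it.

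For the asymptotic expansion, integrate by parts three times in \(\int_0^\infty \e^{-t}(t+z)^{-1}\dd t\). This gives
\[
\int_0^\infty\frac{\e^{-t}}{t+z}\dd t=\frac1z-\frac1{z^2}+\frac{2}{z^3}-6\int_0^\infty\frac{\e^{-t}}{(t+z)^4}\dd t,
\]
so that \(R_3(z)=-6\e^{-z}\int_0^\infty \e^{-t}(t+z)^{-4}\dd t\). Since \(|\e^{-z}|=\e^{-\Re z}\), the remaining task is to bound \(|t+z|\) from below for \(t\geq0\).

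The main point is this geometric bound in the sector \(\pi/2\leq|\arg z|<\pi\). Here \(\Re z\leq0\), and the distance from \(z\) to the ray \([0,\infty)\) equals \(|z|\) whenever \(\Re z\geq0\), which does not apply here. Instead I would use \(|t+z|\geq|\Im z|=|z||\sin\arg z|\), together with \(|t+z|\geq\) the distance from \(z\) to \(-t\)'s complement. This gives the crude bound
\[
|t+z|^4\geq |z|^4\sin^4(\arg z),
\]
which is too weak, since the claim has only one factor of \(|\sin|\). To recover the claimed shape, I would instead first rotate the contour of integration. For \(\Im z>0\), integrate along the ray \(t=s\e^{-i\phi}\) with a suitable \(\phi\in[0,\pi/2)\); the exponential factor stays bounded and the arc contribution vanishes by Jordan-type estimates. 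The rotation angle is chosen so that \(|t+z|\geq|z|\) along the new ray, for instance taking the ray orthogonal to \(z\)'s direction. The Jacobian and the decay \(\e^{-s\cos\phi}\) then produce a factor \(1/\cos\phi\), comparable to \(1/|\sin\arg z|\). I expect choosing this ray and tracking the constant (to land exactly on \(6\)) to be the only delicate step. If the constant resists, a combination of the distance bound \(|t+z|\geq|z|\cdot|\sin\arg z|\) for one power with \(|t+z|\geq|z|\) on most of the rotated ray should suffice. The case \(\Im z<0\) follows by conjugate symmetry.
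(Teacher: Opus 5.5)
Your proof of the connection formula is essentially the paper's: the paper shows that $E_1-\Ein+\Log z$ has zero derivative on $\Omega$ and evaluates the constant as $x\downarrow 0$, while you reduce to $x>0$ by the identity theorem. Both end with the same classical integral for $\gamma$.

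For the remainder you take a different route, and as written it has a sign error. For $\Im z>0$ the pole $t=-z$ of $\e^{-t}(t+z)^{-4}$ lies in the open lower half-plane, since its argument is $\arg z-\pi\in[-\pi/2,0)$. Rotating clockwise to $t=s\e^{-i\phi}$ therefore moves toward the pole. Along that ray $|t+z|^2=s^2+2s|z|\cos(\phi+\arg z)+|z|^2<|z|^2$ for small $s>0$ whenever $\phi>0$. Once $\phi>\pi-\arg z$, the rotation even sweeps across the pole.

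You must rotate counterclockwise, to $t=s\e^{i\phi}$ with $\phi=\arg z-\pi/2\in[0,\pi/2)$. This is the ray orthogonal to $z$ that you mention, so your geometric idea is right and only the sign of the parametrization is wrong. With this choice:
\begin{itemize}
\item The closed sector $0\le\arg t\le\phi$ contains no singularity.
\item The arc at radius $\rho$ contributes $O(\rho^{-3})$, so no Jordan-type lemma is needed.
\item On the ray, $|t+z|^2=s^2+|z|^2\ge|z|^2$ and $\Re t=s\sin(\arg z)$.
\end{itemize}
Hence
\[
|R_3(z)|\le 6\e^{-\Re z}|z|^{-4}\int_0^\infty \e^{-s\sin(\arg z)}\dd s=\frac{6\e^{-\Re z}}{|z|^4|\sin(\arg z)|}.
\]
This is exactly the claimed bound, so nothing delicate remains and your fallback is unnecessary. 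The case $\Im z<0$ follows by conjugation, as you say.

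The paper avoids contour deformation altogether by using the exact identity $\frac1{t+z}=\frac1z-\frac t{z^2}+\frac{t^2}{z^3}-\frac{t^3}{z^3(t+z)}$. This writes the same remainder as $R_3(z)=-\frac{\e^{-z}}{z^3}\int_0^\infty\frac{t^3\e^{-t}}{t+z}\dd t$, which equals your $-6\e^{-z}\int_0^\infty\e^{-t}(t+z)^{-4}\dd t$ but has only one factor of $t+z$ in the denominator. The crude bound $|t+z|\ge|\Im z|=|z||\sin(\arg z)|$, which you rightly rejected for the fourth power, is then exactly enough, and $\int_0^\infty t^3\e^{-t}\dd t=6$ finishes in one line. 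Your rotation is the classical argument for $E_1$ in a sector and, once the direction is corrected, gives the same constant. The paper's choice of remainder representation simply makes the deformation and its justification unnecessary.
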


\begin{proof}
The integral in \eqref{eq:E1-definition} converges locally uniformly on
\(\Omega\). Differentiating under the integral gives
\[
  E_1'(z)=-\frac{\e^{-z}}{z},
  \qquad
  \Ein'(z)=\frac{1-\e^{-z}}{z}.
\]
Hence \(E_1(z)-\Ein(z)+\Log z\) is constant. For \(x>0\),
\[
  E_1(x)=\int_x^\infty\frac{\e^{-t}}{t}\dd t,
\]
and therefore
\begin{align*}
  \lim_{x\downarrow0}\bigl(E_1(x)+\log x\bigr)
  &=
  \int_0^1\frac{\e^{-t}-1}{t}\dd t
  +\int_1^\infty\frac{\e^{-t}}{t}\dd t\\
  &=-\gamma.
\end{align*}
The last equality is the standard limiting formula for the
Euler--Mascheroni constant; see, for instance,
\cite[\S5.1]{AbramowitzStegun1964}.
Since \(\Ein(x)\to0\) as \(x\downarrow0\), this proves
\eqref{eq:E1-Ein-connection}.

For the second assertion, use the exact identity
\[
  \frac1{t+z}
  =
  \frac1z-\frac{t}{z^2}+\frac{t^2}{z^3}
  -\frac{t^3}{z^3(t+z)}.
\]
Substitution into \eqref{eq:E1-definition} gives
\begin{align*}
  E_1(z)
  &=
  \e^{-z}
  \left\{
    \frac1z\int_0^\infty\e^{-t}\dd t
    -\frac1{z^2}\int_0^\infty t\e^{-t}\dd t
    +\frac1{z^3}\int_0^\infty t^2\e^{-t}\dd t
  \right\}
  +R_3(z)\\
  &=
  \frac{\e^{-z}}{z}
  \left(
    1-\frac1z+\frac{2}{z^2}
  \right)
  +R_3(z),
\end{align*}
where
\[
  R_3(z)
  =
  -\frac{\e^{-z}}{z^3}
  \int_0^\infty
  \frac{t^3\e^{-t}}{t+z}\dd t.
\]

If \(z=|z|\e^{i\theta}\) and
\(\pi/2\leq|\theta|<\pi\), then
\[
  |t+z|
  \geq|\Im z|
  =
  |z\sin\theta|
  \qquad (t\geq0).
\]
Consequently,
\begin{align*}
  |R_3(z)|
  &\leq
  \frac{\e^{-\Re z}}{|z|^3}
  \int_0^\infty
  \frac{t^3\e^{-t}}{|t+z|}\dd t\leq
  \frac{\e^{-\Re z}}
  {|z|^4|\sin\theta|}
  \int_0^\infty t^3\e^{-t}\dd t=
  \frac{6\e^{-\Re z}}
  {|z|^4|\sin\theta|},
\end{align*}
which proves \eqref{eq:E1-three-term-remainder-bound}. 
% The same
% asymptotic expansion is recorded in Abramowitz and
% Stegun~\cite[\S5.1]{AbramowitzStegun1964} and
% Olver~\cite[Chapter~1, \S3, and pp.~109--112]{Olver1997}.
\end{proof}
Our analysis would require us to understand the derivatives of $f(z)$. We will use Cauchy's integral formula along with a saddle-point analysis to get a satisfactory approximation for $f^{(k)}(0)$ as $k \to \infty$. It will turn out that the saddle equation for the \(k\)-th Taylor coefficient of $f(z)$ is
\[
  zH'(z)=1-z-\e^{-z}=k.
\]
It is also convenient to write \(z=-\xi\).

\begin{lemma}
\label{lem:saddle-points}
For all sufficiently large real \(x\), the equation
\begin{equation}\label{eq:saddle-equation-xi}
  \e^\xi=1+\xi-x
\end{equation}
has a unique solution \(\xi(x)\) satisfying
\[
  \bigl|\xi(x)-(\log x+i\pi)\bigr|
  \leq\frac{4\log x}{x}.
\]
Moreover,
\begin{align}
  \xi(x)
  &=\log x+i\pi
    -\frac{1+\log x+i\pi}{x}
    +O\!\left(\frac{(\log x)^2}{x^2}\right),
    \label{eq:xi-expansion}\\
  \xi'(x)
  &=\frac1{x-\xi(x)}.
    \label{eq:xi-derivative}
\end{align}
\end{lemma}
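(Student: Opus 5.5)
We will obtain the solution from a contraction-mapping (or Rouché) argument near the point \(\xi_0:=\log x+i\pi\). First rewrite the equation \(\e^\xi=1+\xi-x\) in the form \(\xi=\Log(1+\xi-x)+2\pi i\ell\) with the branch chosen so that the right side is close to \(\xi_0\). Since \(1+\xi-x\approx -x\) for \(\xi\) near \(\xi_0\), a natural fixed-point map is
\[
  \Psi(\xi):=\log x+i\pi+\Log\!\Bigl(1-\frac{1+\xi}{x}\Bigr),
\]
because \(-x(1-(1+\xi)/x)=1+\xi-x\) and \(\e^{\log x+i\pi}=-x\). Hence \(\e^{\Psi(\xi)}=1+\xi-x\), and every fixed point of \(\Psi\) solves the equation.

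The first step is to work on the closed disk \(\Delta:=\{|\xi-\xi_0|\le 4\log x/x\}\). For \(\xi\in\Delta\) we have \(|1+\xi|\le \log x+\pi+2\le 2\log x\) for large \(x\). The bound \(|\Log(1-u)|\le |u|+|u|^2\) for \(|u|\le1/2\) then gives \(|\Psi(\xi)-\xi_0|\le 2\log x/x+O((\log x)^2/x^2)\le 4\log x/x\). So \(\Psi\) maps \(\Delta\) to itself. Moreover \(\Psi'(\xi)=-1/(x-1-\xi)\) has modulus \(\le 2/x<1\), so \(\Psi\) is a contraction, and Banach's fixed-point theorem produces a solution in \(\Delta\).

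For uniqueness in \(\Delta\), note that any solution \(\xi\in\Delta\) satisfies \(\e^{\xi}=\e^{\Psi(\xi)}\). Thus \(\xi-\Psi(\xi)\in 2\pi i\mathbb Z\), while \(|\xi-\Psi(\xi)|\le 8\log x/x<2\pi\). Hence \(\xi=\Psi(\xi)\), and contraction uniqueness applies.

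The expansion \eqref{eq:xi-expansion} then follows by a single iteration. Substituting \(\xi=\xi_0+O(\log x/x)\) into \(\Psi\) gives \(\xi=\xi_0-(1+\xi)/x+O((\log x)^2/x^2)=\xi_0-(1+\log x+i\pi)/x+O((\log x)^2/x^2)\).

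For \eqref{eq:xi-derivative}, apply the implicit function theorem to \(F(\xi,x)=\e^\xi-1-\xi+x\). We have \(\partial_\xi F=\e^\xi-1=\xi-x\), which is nonzero, and \(\partial_xF=1\). This gives \(\xi'(x)=-1/(\xi-x)=1/(x-\xi)\). The implicit solution coincides with \(\xi(x)\) by the uniqueness just proved, so \(\xi(x)\) is smooth (indeed real-analytic) in \(x\).

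The only delicate point is the branch bookkeeping: one must check that \(1-(1+\xi)/x\) stays within the disk \(|u-1|\le 1/2\), so that the principal logarithm is analytic and small there. Once this is verified, the rest is routine.
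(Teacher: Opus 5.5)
Your proof is correct and follows essentially the same route as the paper: the same fixed-point map $\xi\mapsto\log x+i\pi+\Log\bigl(1-(1+\xi)/x\bigr)$, shown to be a contraction of the disk of radius $4\log x/x$, then one iteration for the expansion and implicit differentiation for $\xi'(x)$. Two of your steps fill in points the paper leaves implicit. The first is your check that any solution in the disk is a fixed point, since it differs from its image by an element of $2\pi i\mathbb{Z}$ of modulus less than $2\pi$. The second is your appeal to the implicit function theorem, together with uniqueness, to justify that $\xi(x)$ is differentiable.
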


\begin{proof}
Write
\[
  \xi=\log x+i\pi+u.
\]
Equation \eqref{eq:saddle-equation-xi} is equivalent to
\[
  u=
  \Log\left(1-\frac{1+\log x+i\pi+u}{x}\right),
\]
where the logarithm is taken near \(1\). If
\(|u|\leq4\log x/x\), then, for all sufficiently large \(x\),
\[
  \left|\frac{1+\log x+i\pi+u}{x}\right|
  \leq\frac{2\log x}{x}.
\]
Using \(|\Log(1-v)|\leq2|v|\) for \(|v|\leq1/2\), the right-hand side
maps the disk \(|u|\leq4\log x/x\) into itself. Its derivative with respect
to \(u\) is
\[
  -\frac1{x-1-\log x-i\pi-u}
  =O\left(\frac1x\right),
\]
so it is a contraction on this disk. This proves existence and
uniqueness. Expanding the logarithm gives
\[
  u
  =-\frac{1+\log x+i\pi}{x}
   +O\!\left(\Bigl(\frac{\log x}{x}\Bigr)^2\right),
\]
which proves \eqref{eq:xi-expansion}.

Differentiating \eqref{eq:saddle-equation-xi} gives
\[
  \e^{\xi(x)}\xi'(x)=\xi'(x)-1.
\]
Since \(\e^{\xi(x)}=1+\xi(x)-x\), it follows that
\[
  \bigl(x-\xi(x)\bigr)\xi'(x)=1,
\]
which proves \eqref{eq:xi-derivative}.
\end{proof}

For an integer \(k\) sufficiently large, put
\[
  \xi_k:=\xi(k),
  \qquad
  z_k:=-\xi_k,
  \qquad
  r_k:=|z_k|,
  \qquad
  B_k:=\xi_k(k-\xi_k).
\]
We take
\[
  \vartheta_k:=\arg z_k\in\Bigl(-\pi,-\frac{\pi}{2}\Bigr),
\]
and throughout \(\sqrt{B_k}\) denotes the principal square root, characterized by
\((\sqrt{B_k})^2=B_k\) and \(\Re\sqrt{B_k}>0\).

Lemma~\ref{lem:saddle-points} gives
\begin{align}
  x_k
  &:=\Re{\xi_k} = 
  \log k+O\left(\frac{\log k}{k}\right),
  &
  y_k
  &:= \Im{\xi_k} =
  \pi+O\left(\frac{\log k}{k}\right),
  \label{eq:saddle-coordinate-estimates}\\
  \vartheta_k
  &=
  -\pi+\frac{\pi}{r_k}
  +O\left(\frac1{r_k^2}\right),
  &
  r_k
  &=
  \log k+O\left(\frac1{\log k}\right).
  \label{eq:saddle-angle-estimate}
\end{align}
The saddle equation also gives
\begin{equation}\label{eq:exp-xk-estimate}
  \e^{x_k}=k+O(\log k)
\end{equation}
and
\begin{equation}\label{eq:Bk-real-size}
  \Re B_k\asymp |B_k| =(1+o(1)) k\log k.
\end{equation}
Define
\begin{equation}\label{eq:Qk-Ak-def}
  Q_k:=
  \frac{f(z_k)}{z_k^k\sqrt{2\pi B_k}},
  \qquad
  \theta_k:=\arg Q_k.
\end{equation}

As is standard in the saddle-point method, we will expand \(H(z)\) in a neighbourhood of \(z_k\) to approximate the integrand with a Gaussian. The following lemma gives the necessary Taylor expansion and bounds on the remainder term.
\begin{lemma}
\label{lem:local-saddle-taylor}
Put 
\[
  F_k(t):=H(z_k\e^{it})-ikt.
\]
There exist absolute constants \(\eta,C>0\) such that, uniformly for
\(
  |t|\leq\eta/\log k,
\)
one has
\begin{equation}\label{eq:local-saddle-expansion}
  F_k(t)
  =H(z_k)-\frac12B_kt^2+R_k(t),
  \qquad
  |R_k(t)|\leq Ck(\log k)^2|t|^3.
\end{equation}
\end{lemma}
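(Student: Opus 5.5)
We plan to prove the lemma by direct Taylor expansion of \(F_k\) at \(t=0\), using the saddle equation to kill the linear term. First compute derivatives in \(t\). Since \(zH'(z)=1-z-\e^{-z}\), the chain rule gives
\[
F_k'(t)=i\,w H'(w)-ik=i\bigl(1-w-\e^{-w}-k\bigr),\qquad w=z_k\e^{it}.
\]
At \(t=0\) we have \(w=z_k=-\xi_k\), so \(1-z_k-\e^{-z_k}=1+\xi_k-\e^{\xi_k}=k\) by \eqref{eq:saddle-equation-xi} with \(x=k\). Hence \(F_k'(0)=0\).

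Differentiating once more gives
\[
F_k''(t)=i\cdot iw\bigl(-1+\e^{-w}\bigr)=w\bigl(1-\e^{-w}\bigr).
\]
At \(w=z_k\) this equals \(-\xi_k(1-\e^{\xi_k})\). Since \(\e^{\xi_k}=1+\xi_k-k\), we get \(1-\e^{\xi_k}=k-\xi_k\), and so \(F_k''(0)=-\xi_k(k-\xi_k)=-B_k\). This produces the quadratic term \(-\tfrac12B_kt^2\).

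For the remainder we use Taylor's formula with integral remainder along the segment \([0,t]\):
\[
|R_k(t)|\le \tfrac16|t|^3\sup_{|s|\le|t|}|F_k'''(s)|.
\]
Differentiating \(F_k''\) gives
\[
F_k'''(s)=iw\bigl(1-\e^{-w}+w\e^{-w}\bigr),
\]
so we need a bound on \(|w|\bigl(1+(1+|w|)|\e^{-w}|\bigr)\) for \(w=z_k\e^{is}\) with \(|s|\le\eta/\log k\).

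The main step is controlling \(|\e^{-w}|=\e^{-\Re w}\) along this arc. We have \(|w|=r_k=\log k+O(1/\log k)\) by \eqref{eq:saddle-angle-estimate}. Moreover,
\[
-\Re w=r_k\cos(\pi-\vartheta_k-s)\cdot(\text{sign}),
\]
and more simply \(-\Re w\le r_k\). Since \(r_k-x_k=O(1/\log k)\), with \(x_k=\Re\xi_k\), we obtain
\[
\e^{-\Re w}\le \e^{r_k}=\e^{x_k}\e^{O(1/\log k)}\ll k,
\]
using \eqref{eq:exp-xk-estimate}. This bound is uniform in \(s\), so the restriction \(|t|\le\eta/\log k\) is not even needed for it; any fixed \(\eta\) suffices.

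Therefore \(|F_k'''(s)|\ll \log k\,(1+k\log k)\ll k(\log k)^2\), which gives \(|R_k(t)|\le Ck(\log k)^2|t|^3\). The constant term is \(F_k(0)=H(z_k)\).

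The only delicate point is checking \(r_k\le x_k+O(1/\log k)\). This follows from \(r_k^2=x_k^2+y_k^2\) with \(y_k=\pi+O(\log k/k)\), so \(r_k-x_k=y_k^2/(r_k+x_k)=O(1/\log k)\), in agreement with \eqref{eq:saddle-coordinate-estimates}.
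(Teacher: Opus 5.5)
Your proposal is correct and follows the paper's proof essentially step for step. The saddle equation kills \(F_k'(0)\), the identity \(\e^{\xi_k}=1+\xi_k-k\) gives \(F_k''(0)=-B_k\), and Taylor's formula with \(|F_k'''|\ll k(\log k)^2\) controls the remainder. The only difference is minor: you bound \(\e^{-\Re w}\le \e^{r_k}=\e^{x_k+O(1/\log k)}\ll k\) on the whole circle \(|w|=r_k\), whereas the paper uses \(|t|\le\eta/\log k\) to get \(\Re(-z_k\e^{it})=x_k+O(\eta)\), so your version shows the remainder bound holds for all real \(t\).
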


\begin{proof}
Set
\[
  \Psi(z):=zH'(z)=1-z-\e^{-z} \quad\text{ and }\quad  z(t):=z_k\e^{it}.
\]
Since \(z_k\) is a saddle,
\[
  \Psi(z_k)=k.
\]
By the chain rule,
\[
  F_k'(t)
  =iz(t)H'(z(t))-ik
  =i\{\Psi(z(t))-k\}.
\]
Thus
\[
  F_k'(0)=0.
\]
Differentiating once more gives
\[
  F_k''(t)=-z(t)\Psi'(z(t)).
\]

The saddle equation gives
\[
  \e^{-z_k}=\e^{\xi_k}=1+\xi_k-k,
\]
and hence using
\[
  \Psi'(z)=-1+\e^{-z},
\]
we get
\begin{align*}
  F_k''(0)=-z_k\Psi'(z_k)=\xi_k\bigl(-1+\e^{\xi_k}\bigr)=-\xi_k(k-\xi_k)=-B_k.
\end{align*}

A further differentiation yields
\begin{equation}\label{eq:F-third-derivative}
  F_k'''(t)
  =-i\left\{
    z(t)\Psi'(z(t))+z(t)^2\Psi''(z(t))
  \right\},
\end{equation}
where
\[
  \Psi''(z)=-\e^{-z}.
\]
% By Lemma~\ref{lem:saddle-points},
% \[
%   |z_k|\asymp \log k,
%   \qquad
%   \e^{-\Re z_k}=\e^{\Re\xi_k}\asymp k.
% \]
If \(|t|\leq\eta/\log k\), then
\[
  \Re(-z_k\e^{it})
  =\Re\xi_k+O(|\xi_k||t|)
  =\Re\xi_k+O(\eta).
\]
Consequently,
\[
  \e^{-\Re z(t)}\ll k
\]
uniformly in this range. Since \(|z(t)|=r_k\asymp \log k\),
\eqref{eq:F-third-derivative} gives
\[
  |F_k'''(t)|
  \ll
  |z(t)|\bigl(1+\e^{-\Re z(t)}\bigr)
  +|z(t)|^2\e^{-\Re z(t)}
  \ll k(\log k)^2.
\]
Taylor's formula with integral remainder now gives
\[
  F_k(t)
  =F_k(0)+F_k'(0)t+\frac12F_k''(0)t^2
   +\frac12\int_0^t(t-u)^2F_k'''(u)\dd u.
\]
Using \(F_k(0)=H(z_k)\), \(F_k'(0)=0\), and
\(F_k''(0)=-B_k\), we obtain
\eqref{eq:local-saddle-expansion}.
\end{proof}
% The decomposition into neighborhoods of the two conjugate saddle
% points and the remaining arcs is standard in saddle-point arguments
% for exponential-integral phases; compare
% Hildebrand~\cite{Hildebrand1990}, Hildebrand and
% Tenenbaum~\cite{HildebrandTenenbaum1993}, and
% Weingartner~\cite{Weingartner2026}. We include the estimates needed in
% our setting.
The following lemma provides a technical input that helps us simplify the analysis.
\begin{lemma}
\label{lem:first-angular-maximum}
For
\(
  -\pi\leq\vartheta\leq-\frac{\pi}{2},
\)
put
\[
\begin{split}
  \mathcal M_k(\vartheta)
  :=
  \frac{\e^{-r_k\cos\vartheta}}{r_k}
  \cos\bigl(r_k\sin\vartheta+\vartheta\bigr)-
  \frac{\e^{-r_k\cos\vartheta}}{r_k^2}
  \cos\bigl(r_k\sin\vartheta+2\vartheta\bigr).
\end{split}
\]
For every fixed \(\eta_0>0\), there exists \(c_{\eta_0}>0\) such
that, for all sufficiently large \(k\),
\begin{equation}\label{eq:first-angular-maximum}
  \mathcal M_k(\vartheta)
  \leq
  \frac{\e^{x_k}}{r_k}
  \left(1+\frac1{r_k}\right)
  -
  c_{\eta_0}\frac{\e^{x_k}}{r_k^2},
\end{equation}
provided that
\[
  r_k|\sin\vartheta|\geq1\quad\text{and}\quad  |\vartheta-\vartheta_k|
  \geq\frac{\eta_0}{r_k}.
\]
\end{lemma}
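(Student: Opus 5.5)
Write \(\vartheta=-\pi+\varphi\) with \(0\le\varphi\le\pi/2\) and \(s:=r_k\sin\varphi\), so that the hypothesis \(r_k|\sin\vartheta|\ge1\) reads \(s\ge1\). The starting observation is that
\[
\mathcal M_k(\vartheta)=\Re\Bigl[\e^{-z}\Bigl(\frac1z-\frac1{z^2}\Bigr)\Bigr],\qquad z=r_k\e^{i\vartheta}.
\]
In the new variables this becomes
\[
\mathcal M_k(\vartheta)=-\frac{\e^{r_k\cos\varphi}}{r_k}\cos(s-\varphi)-\frac{\e^{r_k\cos\varphi}}{r_k^2}\cos(s-2\varphi).
\]
The saddle point itself lies in this family: since \(x_k=\Re\xi_k=-r_k\cos\vartheta_k\), we have exactly \(\e^{x_k}=\e^{r_k\cos\varphi_k}\), where \(\varphi_k=\vartheta_k+\pi=\pi/r_k+O(r_k^{-2})\) by \eqref{eq:saddle-angle-estimate}. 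Hence \(s_k:=r_k\sin\varphi_k=\pi+O(1/r_k)\). The plan is to compare the amplitude \(\e^{r_k\cos\varphi}\) with \(\e^{x_k}\), and to use the oscillating factors only where amplitude alone is insufficient.

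First I will use the trivial bound. Since both cosines are at most \(1\),
\[
\mathcal M_k(\vartheta)\le\frac{\e^{r_k\cos\varphi}}{r_k}\Bigl(1+\frac1{r_k}\Bigr)=\frac{\e^{x_k}}{r_k}\Bigl(1+\frac1{r_k}\Bigr)\exp\{-r_k(\cos\varphi_k-\cos\varphi)\}.
\]
The quantity \(r_k(\cos\varphi_k-\cos\varphi)\) is increasing in \(\varphi\). Consider the range \(s\ge 2\pi\), which corresponds to \(\varphi\ge\varphi^*\) with \(\sin\varphi^*=2\pi/r_k\). There
\[
r_k(\cos\varphi_k-\cos\varphi)\ge r_k(\cos\varphi_k-\cos\varphi^*)=\frac{4\pi^2-\pi^2}{2r_k}+O(r_k^{-2})\ge\frac{1}{r_k},
\]
so \(\exp\{-\cdots\}\le 1-c/r_k\). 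This produces a deficit \(c\,\e^{x_k}/r_k^2\). The same argument covers all larger \(\varphi\) up to \(\pi/2\), including the region \(\varphi\ge\delta\), where the deficit is even exponential in \(r_k\). So no phase information at all is needed for \(s\ge2\pi\).

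It remains to treat \(1\le s\le 2\pi\) with \(|\vartheta-\vartheta_k|\ge\eta_0/r_k\). In this range \(\varphi=O(1/r_k)\) and \(\sin\varphi=\varphi+O(r_k^{-3})\), so the condition \(|\vartheta-\vartheta_k|\ge\eta_0/r_k\) translates into \(|s-s_k|\ge\eta_0/2\), and hence \(|s-\pi|\ge\eta_0/3\) for large \(k\). Also, in both cosines the shifts \(\varphi\) and \(2\varphi\) are \(O(1/r_k)\).

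On the compact set \(\{s\in[1,2\pi]:|s-\pi|\ge\eta_0/3\}\), we have \(-\cos s\le1-c'_{\eta_0}\) for some \(c'_{\eta_0}>0\), and therefore \(-\cos(s-\varphi)\le1-c'_{\eta_0}/2\) for large \(k\). The amplitude satisfies
\[
\e^{r_k\cos\varphi}=\e^{x_k}\exp\{O(1/r_k)\}=\e^{x_k}\bigl(1+O(1/r_k)\bigr),
\]
because \(r_k\cos\varphi-r_k\cos\varphi_k=O(s^2/r_k)=O(1/r_k)\). Bounding the second cosine by \(1\) then gives
\[
\mathcal M_k(\vartheta)\le\frac{\e^{x_k}}{r_k}\Bigl(1-\frac{c'_{\eta_0}}2+O(r_k^{-1})\Bigr),
\]
which has a constant relative deficit. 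This is much stronger than the required \(c_{\eta_0}\e^{x_k}/r_k^2\). Taking \(c_{\eta_0}\) to be the minimum of the constants from the two ranges proves \eqref{eq:first-angular-maximum}.

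The main obstacle is the bookkeeping at the junction of the two regimes. One needs a cutoff, here \(s=2\pi\), at which the pure amplitude loss \((s^2-\pi^2)/(2r_k)\) already beats the factor \(1/r_k\) uniformly, while below that cutoff all phase corrections (\(\varphi\), \(2\varphi\), and \(\sin\varphi-\varphi\)) are \(O(1/r_k)\) and therefore harmless against a constant loss in \(-\cos s\). The key fact that makes this work is the exact identity \(\e^{x_k}=\e^{-r_k\cos\vartheta_k}\), which avoids any \(O(\log k)\) discrepancy from \eqref{eq:exp-xk-estimate}.
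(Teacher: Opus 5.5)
Your proof is correct and is essentially the paper's argument. The paper also substitutes $u=\pi+\vartheta$ and splits at $u=2u_k$, where $r_k\sin u\approx 2\pi$, matching your cutoff $s=2\pi$. On the far range it uses the exact identity $x_k=r_k\cos u_k$ to get the same amplitude loss $3\pi^2/(2r_k)$. On the near range it gets a constant loss from $|q_k(u)-\pi|\ge c\eta_0$. The only differences are cosmetic: the paper tracks the combined phase $q_k(u)=r_k\sin u-u$ via the mean-value theorem with $q_k'\asymp r_k$ and bounds the near-range amplitude crudely by $e^{r_k}$, whereas you work with $s=r_k\sin\varphi$ and treat the $O(1/r_k)$ shifts as perturbations.
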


\begin{proof}
Put
\[
  u:=\pi+\vartheta,
  \qquad
  u_k:=\pi+\vartheta_k.
\]
Thus \(0\leq u\leq\pi/2\), and
\[
  u_k=\frac{\pi}{r_k}+O\left(\frac1{r_k^2}\right).
\]
Also, putting
\[
  q_k(u):=r_k\sin u-u,
\]
we have
\begin{equation}\label{eq:Mk-u-form}
  \mathcal M_k(\vartheta)
  =
  \frac{\e^{r_k\cos u}}{r_k}
  \left\{
    -\cos q_k(u)
    -
    \frac1{r_k}\cos\bigl(q_k(u)-u\bigr)
  \right\}.
\end{equation}

Suppose first that \(0\leq u\leq2u_k\). In this range, 
\[
  q_k'(u)=r_k\cos u-1\asymp r_k.
\]
Moreover,
\[
  q_k(u_k)
  =
  r_k\sin u_k-u_k
  =
  \pi+O\left(\frac1{r_k}\right)
\]
and 
\[
q_k(2u_k)=r_k\sin(2u_k)-2u_k=2y_k\cos u_k-2u_k=2\pi-\frac{2\pi}{r_k}
    +O\left(\frac1{r_k^2}\right)
  <2\pi.
\]
The hypothesis
\[
  |u-u_k|
  =
  |\vartheta-\vartheta_k|
  \geq\frac{\eta_0}{r_k}
\]
and the mean-value theorem therefore give
\[
  |q_k(u)-\pi|\geq c\eta_0
\]
for all sufficiently large \(k\). Hence, as $q_k(u)\in [0, 2\pi)$ for $u\in [0, 2u_k]$, we have
\[
  -\cos q_k(u)\leq1-c_0
\]
for some \(c_0=c_0(\eta_0)>0\). Since
\[
  \e^{r_k\cos u}
  \leq\e^{r_k}
  =
  \e^{x_k}
  \left(1+O\left(\frac1{r_k}\right)\right),
\]
equation \eqref{eq:Mk-u-form} gives
\[
  \mathcal M_k(\vartheta)
  \leq
  (1-c_0/2)\frac{\e^{x_k}}{r_k},
\]
which is stronger than \eqref{eq:first-angular-maximum}.

Suppose now that \(u\geq2u_k\). One has
\begin{align*}
  x_k-r_k\cos(2u_k)=
  r_k\bigl(\cos u_k-\cos(2u_k)\bigr)=
  \frac{3\pi^2}{2r_k}
  +O\left(\frac1{r_k^2}\right).
\end{align*}
In particular, as cosine is decreasing, we have
\[
  r_k\cos u\leq x_k-\frac{10}{r_k}
\]
for all sufficiently large \(k\). Hence,
using \(-\cos v\leq1\) in \eqref{eq:Mk-u-form}, we obtain
\begin{align*}
  \mathcal M_k(\vartheta)
  \leq
  \frac{\e^{r_k\cos u}}{r_k}
  \left(1+\frac1{r_k}\right)\leq
  \frac{\e^{x_k}}{r_k}
  \left(1+\frac1{r_k}\right)
  -
  7\frac{\e^{x_k}}{r_k^2}.
\end{align*}
This proves the lemma.
\end{proof}
The following lemma essentially shows that $|f(z)|$ can be large only in a small neighborhood of the saddle points \(z_k\) and \(\overline{z_k}\) on the circle $\{z: |z|=r_k\}$.
\begin{lemma}\label{lem:saddle-separation}
Define
\[
  d_k(\vartheta)
  :=\min_{m\in\mathbb Z}
  \min\left\{
    |\vartheta-\vartheta_k+2\pi m|,
    |\vartheta+\vartheta_k+2\pi m|
  \right\}.
\]
There exist absolute constants \(c,\eta>0\) such that, for all
sufficiently large \(k\),
\begin{align}
  \Re H(r_k\e^{i\vartheta})
  &\leq
  \Re H(z_k)-ck\log k\,d_k(\vartheta)^2,
  &&d_k(\vartheta)\leq\frac{\eta}{\log k},
  \label{eq:separation-near}\\
  \Re H(r_k\e^{i\vartheta})
  &\leq
  \Re H(z_k)-c\frac{k}{(\log k)^2},
  &&d_k(\vartheta)\geq\frac{\eta}{\log k}.
  \label{eq:separation-far}
\end{align}
\end{lemma}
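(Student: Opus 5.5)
Since \(H\) has real Taylor coefficients, \(\Re H(\bar z)=\Re H(z)\) and \(\Re H(r_k\e^{i\vartheta})\) is an even, \(2\pi\)-periodic function of \(\vartheta\). It therefore suffices to take \(\vartheta\in[-\pi,0]\) and measure distance to \(\vartheta_k\). The plan is to split the circle \(|z|=r_k\) into three regions: a near region \(|\vartheta-\vartheta_k|\le\eta/\log k\); a ``left'' arc \(-\pi\le\vartheta\le-\pi/2\) with \(|\vartheta-\vartheta_k|\ge\eta/\log k\); and the remaining arc \(-\pi/2\le\vartheta\le0\).

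\emph{Near region.} Lemma~\ref{lem:local-saddle-taylor} with \(t=\vartheta-\vartheta_k\) gives \(H(r_k\e^{i\vartheta})=F_k(t)+ikt\). Taking real parts,
\[
\Re H(r_k\e^{i\vartheta})=\Re H(z_k)-\tfrac12\Re B_k\,t^2+O\bigl(k(\log k)^2|t|^3\bigr).
\]
By \eqref{eq:Bk-real-size}, \(\Re B_k\ge c_1k\log k\). Choosing \(\eta\) small makes the cubic error at most \(\tfrac14\Re B_k t^2\), which proves \eqref{eq:separation-near}. The same estimate at \(|t|=\eta/\log k\) already gives a drop of order \(k/\log k\), which is better than \eqref{eq:separation-far}.

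\emph{Left arc.} Here I use \eqref{eq:E1-Ein-connection} to write \(H(z)=-z+E_1(z)+\Log z+\gamma\). For \(z=r_k\e^{i\vartheta}\), the term \(\Re(-z+\Log z+\gamma)=-r_k\cos\vartheta+\log r_k+\gamma\) is \(O(\log k)\), which is negligible against \(k/(\log k)^2\). The dominant part is \(\Re E_1(z)\). Lemma~\ref{lem:E1-formulas} gives \(\Re(\e^{-z}/z-\e^{-z}/z^2)=\mathcal M_k(\vartheta)\). The remainder consists of the term \(2\e^{-z}/z^3\) and \(R_3\), and when \(r_k|\sin\vartheta|\ge1\) it is bounded by \(O(\e^{x_k}/r_k^3)\). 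Lemma~\ref{lem:first-angular-maximum} with \(\eta_0=\eta\) (using \(r_k\asymp\log k\)) then gives
\[
\Re H\le\frac{\e^{x_k}}{r_k}\Bigl(1+\frac1{r_k}\Bigr)-c\frac{\e^{x_k}}{r_k^2}+O(\log k).
\]
At \(z_k\) itself the same expansion gives \(\Re H(z_k)=\frac{\e^{x_k}}{r_k}(1+\frac1{r_k})+O(\e^{x_k}/r_k^3)+O(\log k)\). This holds because \(\arg(\e^{-z_k}/z_k)\) is essentially \(0\): the phase \(r_k\sin\vartheta_k+\vartheta_k\) equals \(-y_k+\vartheta_k\approx-2\pi\), so the cosines are \(1+O(1/r_k^2)\). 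With \(\e^{x_k}\asymp k\), subtracting yields \eqref{eq:separation-far}.

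The leftover piece of the left arc is \(r_k|\sin\vartheta|<1\), meaning \(\vartheta\) lies within about \(1/r_k\) of \(-\pi\). There I plan to bound \(\Re E_1\) directly. Writing \(E_1(z)=\e^{-z}\int_0^\infty \e^{-t}(t+z)^{-1}\dd t\), one has \(|E_1(z)|\le \e^{-\Re z}\cdot O(\log r_k / r_k)\) uniformly, provided the path of integration is deformed slightly to avoid the near-singularity at \(t=-z\). If that bound is not sharp enough, I would instead use continuity from the real axis, where \(\Re E_1(-x)=-\mathrm{Ei}(x)\), which has the wrong sign and is therefore very small. Near \(\vartheta=-\pi\), the phase of \(\e^{-z}\) is close to \(0\) and \(1/z\approx-1/r_k\), so \(\Re(\e^{-z}/z)\approx-\e^{x_k}/r_k\), which is far below \(\Re H(z_k)\). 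This boundary sub-case is the step I expect to be the main technical obstacle: it requires a uniform asymptotic for \(E_1\) near the cut, via the reflection \(E_1(-x\pm i0)=-\mathrm{Ei}(x)\mp i\pi\).

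\emph{Right arc.} For \(-\pi/2\le\vartheta\le0\) we have \(\Re z\ge0\). The integral representation gives \(|\Ein(z)|\le\int_0^1\frac{|1-\e^{-zt}|}{t}\dd t\ll\log(2+r_k)\), so \(\Re H=O(\log k)\). This is far below \(\Re H(z_k)\asymp k/\log k\), which completes the proof.
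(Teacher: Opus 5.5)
Your near-region argument, your treatment of the left arc where \(r_k|\sin\vartheta|\ge1\), and your right-arc bound all match the paper's proof. (The left-arc part uses the expansion of \(E_1\), Lemma~\ref{lem:first-angular-maximum}, and \(\Re H(z_k)=\frac{\e^{x_k}}{r_k}(1+\frac1{r_k}+O(r_k^{-2}))\).)

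\textbf{The gap.} The gap is the sub-case \(-\pi\le\vartheta\le-\pi/2\), \(r_k|\sin\vartheta|<1\), which you leave as a plan. For small \(\eta\) this piece lies entirely in the far range, since there \(|\vartheta-\vartheta_k|\ge(\pi-1+o(1))/r_k\). So the Taylor estimate does not cover it, and it must be handled separately.

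Your first option cannot work. Near \(\vartheta=-\pi\) one has \(\e^{-\Re z}\approx\e^{r_k}\), and
\(\e^{r_k}/r_k=\frac{\e^{x_k}}{r_k}\bigl(1+\frac{\pi^2}{2r_k}+O(r_k^{-2})\bigr)\),
which already exceeds \(\Re H(z_k)\). Since \(|E_1(z)|\) really is of this size there, no bound on \(|E_1(z)|\), with or without your extra \(\log r_k\), can give \eqref{eq:separation-far}. What is needed is the \emph{sign} of \(\Re E_1\), equivalently of \(\Re\Ein\).

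Your second option has the right heuristic but is not an argument. The remainder in Lemma~\ref{lem:E1-formulas} carries the factor \(1/|\sin(\arg z)|\), so it is useless in this region, and the ``uniform asymptotic near the cut'' you invoke is never established. Also, the phase of \(\e^{-z}\) is not close to \(0\) here: \(|\Im z|=r_k|\sin\vartheta|\) can be nearly \(1\). What is true, and sufficient, is \(\cos(\Im z)\ge\cos1>0\).

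\textbf{How the paper closes it.} The paper handles this case elementarily, without \(E_1\). It writes
\(\Re H(z)=-r_k\cos\vartheta+\int_0^1t^{-1}\bigl(1-\e^{-r_kt\cos\vartheta}\cos(r_kt\sin\vartheta)\bigr)\dd t\).
Using \(\log\cos u\ge-u^2\) for \(|u|\le1\), it gets
\(-r_kt\cos\vartheta+\log\cos(r_kt\sin\vartheta)\ge t(\sqrt{r_k^2-1}-1)\ge0\).
So the integrand is nonpositive. Keeping only \(1-1/r_k\le t\le1\) gives
\(\Re H\le-r_k\cos\vartheta-c_4\e^{-r_k\cos\vartheta}/r_k\).
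Since \(r_k|\sin\vartheta|<1<y_k\) forces \(-r_k\cos\vartheta>x_k\), this is at most \(r_k-c_4\e^{x_k}/r_k\le-c_5k/r_k\), far below \(\Re H(z_k)>0\).

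\textbf{A rigorous version of your route.} If you prefer to keep \(E_1\), rotate the ray in \eqref{eq:E1-definition} by \(\pm\pi/4\), away from the pole \(t=-z\) according to the sign of \(\Im z\). On the new ray \(|t+z|\ge r_k/\sqrt2\). This gives \(E_1(z)=\frac{\e^{-z}}{z}+O(\e^{-\Re z}/r_k^2)\) uniformly up to the cut. Hence \(\Re E_1(z)\le-(\cos1-o(1))\e^{-\Re z}/r_k\) in this sub-case.

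Either way, this step has to be supplied. As written, your proof of \eqref{eq:separation-far} is incomplete.
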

\begin{proof}
We begin near \(z_k\). Write
\[
  \vartheta=\vartheta_k+t,
  \qquad
  |t|\leq\frac{\eta}{\log k}.
\]
Since \(-ikt\) is purely imaginary,
Lemma~\ref{lem:local-saddle-taylor} gives
\[
  \Re H(z_k\e^{it})
  =
  \Re H(z_k)
  -\frac12(\Re B_k)t^2
  +O\bigl(k(\log k)^2|t|^3\bigr).
\]
By \eqref{eq:Bk-real-size}, the ratio of the error term to
\((\Re B_k)t^2\) is \(O((\log k)|t|)\). After decreasing \(\eta\), if
necessary, this ratio is at most \(1/4\). Therefore
\[
  \Re H(z_k\e^{it})
  \leq
  \Re H(z_k)-c_1k\log k\,t^2
\]
for some absolute constant \(c_1>0\). The same argument at \(\overline{z_k}\) proves
\eqref{eq:separation-near}.

We now consider the remainder of the circle. Put
\(
  z:=r_k\e^{i\vartheta}\). Since \(H\) has real
Taylor coefficients, it is enough to consider
\[
  -\pi\leq\vartheta\leq0.
\]
On the left semicircle, assume first that
\[
  -\pi\leq\vartheta\leq-\frac{\pi}{2},
  \qquad
  r_k|\sin\vartheta|\geq1.
\]
By Lemma~\ref{lem:E1-formulas}, 
we have
\[
  E_1(z)
  =
  \e^{-z}
  \left(
    \frac1z-\frac1{z^2}+\frac2{z^3}
  \right)
  +R_3(z),
\]
where
\[
  |R_3(z)|
  \leq
  \frac{6\e^{-\Re z}}
  {r_k^4|\sin\vartheta|}\leq \frac{6\e^{-\Re z}}{r_k^3}.
\]
Therefore
\[
  E_1(z)
  =
  \e^{-z}
  \left(
    \frac1z-\frac1{z^2}+\frac2{z^3}
  \right)
  +
  O\left(
    \frac{\e^{-\Re z}}{r_k^3}
  \right).
\]
Substituting \(z=r_k\e^{i\vartheta}\) and using Lemma~\ref{lem:E1-formulas},
we have \(H(z)=-z+E_1(z)+\Log z+\gamma\). Taking real parts, the
first two terms in the expansion of \(E_1\) contribute \(\mathcal M_k(\vartheta)\),
and \(\Re\Log(r_k\e^{i\vartheta})=\log r_k\).
\begin{equation}\label{eq:real-H-angular}
  \Re H(r_k\e^{i\vartheta})
  =
  -r_k\cos\vartheta+\log r_k+\gamma
  +\mathcal M_k(\vartheta)
  +
  O\left(
    \frac{\e^{-r_k\cos\vartheta}}{r_k^3}
  \right).
\end{equation}

Taking \(\vartheta=\vartheta_k\) in the same formula, note that
\(r_k\cos\vartheta_k=-x_k\) and \(r_k\sin\vartheta_k=-y_k\). Using
\eqref{eq:saddle-coordinate-estimates}--\eqref{eq:saddle-angle-estimate},
\(\mathcal M_k(\vartheta_k)\) gives the displayed main term, while all
other terms in \eqref{eq:real-H-angular} are lower order.
\begin{equation}\label{eq:H-at-saddle-real}
  \Re H(z_k)
  =
  \frac{\e^{x_k}}{r_k}
  \left(
    1+\frac1{r_k}
    +O\left(\frac1{r_k^2}\right)
  \right).
\end{equation}
Suppose that
\[
  d_k(\vartheta)\geq\frac{\eta}{\log k}.
\]
Since \(r_k\asymp\log k\), this implies
\[
  |\vartheta-\vartheta_k|
  \geq\frac{\eta_0}{r_k}
\]
for some fixed \(\eta_0>0\). Lemma~\ref{lem:first-angular-maximum}
and \eqref{eq:real-H-angular} therefore give, for some \(c_2>0\),
\[
\begin{split}
  \Re H(r_k\e^{i\vartheta})
  \leq
  O(r_k)
  +
  \frac{\e^{x_k}}{r_k}
  \left(1+\frac1{r_k}\right)-
  c_2\frac{\e^{x_k}}{r_k^2}
  +
  O\left(\frac{\e^{x_k}}{r_k^3}\right).
\end{split}
\]
Because
\[
  r_k=o\left(\frac{\e^{x_k}}{r_k^2}\right),
\]
comparison with \eqref{eq:H-at-saddle-real} and substituting~\eqref{eq:exp-xk-estimate}
yields, for some \(c_3>0\),
\begin{equation*}\label{eq:left-arc-separation}
  \Re H(r_k\e^{i\vartheta})
  \leq
  \Re H(z_k)-c_3\frac{k}{r_k^2}.
\end{equation*}

There are only two elementary pieces not covered by this argument.
On \(
  -\frac{\pi}{2}\leq\vartheta\leq0,
\)
\[
  \Re H(r_k\e^{i\vartheta}) = -r_k\cos\vartheta+\int_{0}^1\frac{\Re(1-\e^{-zt})}{t}\dd t
  \leq\int_0^1\frac{|1-\e^{-zt}|}{t}\dd t.
\]
For \(t\leq1/r_k\), the numerator is at most \(r_kt\), while for
\(t\geq1/r_k\) it is at most \(2\); hence the integral is at most
\(1+2\log r_k\). Since \(r_k\asymp\log k\), we have
\(1+\log r_k=o(k/r_k^2)\).

Finally, suppose that
\[
  -\pi\leq\vartheta\leq-\frac{\pi}{2},
  \qquad r_k|\sin\vartheta|<1.
\]
The integral representation gives
\[
\Re H(r_k\e^{i\vartheta})
  =-r_k\cos\vartheta
  +\int_0^1
  \frac{1-\e^{-r_kt\cos\vartheta}
  \cos(r_kt\sin\vartheta)}{t}\dd t.
\]
For \(0\leq t\leq1\), using \(\log\cos u\geq-u^2\) for
\(|u|\leq1\), we have
\[
  -r_kt\cos\vartheta+\log\cos(r_kt\sin\vartheta)
  \geq
  t\bigl(\sqrt{r_k^2-1}-1\bigr)\geq0.
\]
Thus the integrand is nonpositive, and restricting the integral to
\(1-1/r_k\leq t\leq1\) gives
\[
  \Re H(r_k\e^{i\vartheta})
  \leq
  -r_k\cos\vartheta
  +\int_{1-1/r_k}^1
  \frac{1-\e^{-r_kt\cos\vartheta}
  \cos(r_kt\sin\vartheta)}{t}\dd t
  \leq
  -r_k\cos\vartheta
  -c_4\frac{\e^{-r_k\cos\vartheta}}{r_k}
\]
for some absolute constant \(c_4>0\). Since \(r_k|\sin\vartheta|<1<y_k\), we have
\(-r_k\cos\vartheta>x_k\); hence the last expression is
\[
  \leq r_k-c_4\frac{\e^{x_k}}{r_k}
  \leq-c_5\frac{k}{r_k}\leq -c_5\frac{k}{(\log k)^2}
\]
for some absolute constant \(c_5>0\).
\end{proof}
\begin{proposition}
\label{prop:f-saddle}
As \(k\to\infty\),
\begin{equation}\label{eq:f-coefficient-asymptotic}
  \frac{f^{(k)}(0)}{k!}
  =2|Q_k|
  \left(
    \cos\theta_k
    +O\left(\sqrt{\frac{\log k}{k}}\right)
  \right).
\end{equation}
Additionally, as \(k\to\infty\),
\begin{equation}\label{eq:Qk-size}
  |Q_k|
  =
  \frac{1}{(\log k)^k\sqrt{2\pi k\log k}}
  \exp\left\{
    \frac{k}{\log k}
    +O\left(\frac{k}{(\log k)^2}\right)
  \right\}.
\end{equation}
\end{proposition}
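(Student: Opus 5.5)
The plan is a standard two-saddle evaluation of the Cauchy integral \eqref{eq: cauchy for higher derivates} on the circle \(|z|=r_k\).

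\textbf{Parametrization.} Write \(z=r_k\e^{i\vartheta}\) with \(\vartheta\in(-\pi,\pi]\). Then
\[
  \frac{f^{(k)}(0)}{k!}=\frac{1}{2\pi r_k^k}\int_{-\pi}^{\pi}\exp\{H(r_k\e^{i\vartheta})-ik\vartheta\}\dd\vartheta .
\]

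\textbf{Splitting the circle.} Split the range of \(\vartheta\) into three pieces: the arc \(|\vartheta-\vartheta_k|\le\eta/\log k\), its mirror image about the real axis, and the remainder. Here \(\eta\) is as in Lemma~\ref{lem:saddle-separation}, shrunk if necessary so that Lemma~\ref{lem:local-saddle-taylor} also applies. Since \(f\) has real Taylor coefficients, the mirror arc contributes exactly the complex conjugate of the first arc. So it suffices to show that the first arc contributes \(Q_k\bigl(1+O(\sqrt{\log k/k})\bigr)\) and that the remainder is \(O(|Q_k|\sqrt{\log k/k})\).

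\textbf{Near arc.} Put \(\vartheta=\vartheta_k+t\). Then \(r_k\e^{i\vartheta}=z_k\e^{it}\) and \(r_k^{-k}\e^{-ik\vartheta_k}=z_k^{-k}\), so the integrand becomes \(z_k^{-k}\exp\{F_k(t)\}\). By Lemma~\ref{lem:local-saddle-taylor} this equals \(z_k^{-k}f(z_k)\exp\{-\tfrac12B_kt^2+R_k(t)\}\).

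Restrict further to \(|t|\le T_k:=(k\log k)^{-1/2}\log k\). There \(|R_k|\ll k(\log k)^2T_k^3=o(1)\), so \(\e^{R_k}=1+O(R_k)\). Integrating the Gaussian against the error gives
\[
  \int\e^{-\frac12 B_kt^2}\,k(\log k)^2|t|^3\dd t\ll k(\log k)^2|B_k|^{-2}\ll |B_k|^{-1/2}\sqrt{\log k/k}.
\]
This uses \(\Re B_k\asymp|B_k|\asymp k\log k\) from \eqref{eq:Bk-real-size}. The completed Gaussian integral is \(\sqrt{2\pi/B_k}\) with the principal root, and its tail beyond \(T_k\) is \(\exp(-c\log^2k)\), which is negligible.

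For \(T_k\le|t|\le\eta/\log k\), the bound \eqref{eq:separation-near} gives \(|f(r_k\e^{i\vartheta})|\le |f(z_k)|\exp(-ck\log k\,t^2)\). This part is again \(|f(z_k)|r_k^{-k}\exp(-c\log^2k)\) up to polynomial factors. Collecting these pieces, the near arc contributes \(\frac{1}{2\pi}\cdot\frac{f(z_k)}{z_k^k}\sqrt{2\pi/B_k}\,(1+O(\sqrt{\log k/k}))=Q_k(1+O(\sqrt{\log k/k}))\).

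\textbf{Far arc.} The bound \eqref{eq:separation-far} gives \(|f|\le|f(z_k)|\exp(-ck/(\log k)^2)\). Since \(|Q_k|\) is \(|f(z_k)|r_k^{-k}\) times only a polynomial factor \((k\log k)^{-1/2}\), the far arc is \(O(|Q_k|\,\e^{-ck/\log^2k}\sqrt{k\log k})\), which is far smaller than \(|Q_k|\sqrt{\log k/k}\). Adding the two near arcs and the far arc gives
\[
  Q_k+\overline{Q_k}+O\bigl(|Q_k|\sqrt{\log k/k}\bigr)=2|Q_k|\bigl(\cos\theta_k+O(\sqrt{\log k/k})\bigr),
\]
which is \eqref{eq:f-coefficient-asymptotic}.

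\textbf{Size of \(|Q_k|\).} For \eqref{eq:Qk-size} I will combine three estimates.
\begin{itemize}
\item \eqref{eq:H-at-saddle-real} gives \(\log|f(z_k)|=\Re H(z_k)=\frac{\e^{x_k}}{r_k}(1+O(1/r_k))\). Together with \eqref{eq:exp-xk-estimate} and \(r_k=\log k+O(1/\log k)\), this becomes \(\frac{k}{\log k}+O\bigl(\frac{k}{(\log k)^2}\bigr)\).
\item \(|z_k|^k=r_k^k=(\log k)^k(1+O(1/\log^2k))^k\), which equals \((\log k)^k\exp\{O(k/\log^2k)\}\). This error is absorbed into the exponent.
\item \(|\sqrt{2\pi B_k}|=\sqrt{2\pi k\log k}\,(1+o(1))\) by \eqref{eq:Bk-real-size}, and the \(1+o(1)\) factor is also absorbed.
\end{itemize}

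\textbf{Main obstacle.} The delicate point is checking that the cubic remainder is controlled at scale \(T_k\) while the Gaussian tail is still negligible. This holds because \(k(\log k)^2T_k^3=(\log k)^{3}/\sqrt{k\log k}\cdot\log k\to0\). I also need the error in the near-arc integral to be relative to \(|Q_k|\) rather than to \(|Q_k|\,|\cos\theta_k|\), which is why \eqref{eq:f-coefficient-asymptotic} is stated with an additive error \(O(\sqrt{\log k/k})\) inside the cosine term.
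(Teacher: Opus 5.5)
Your proposal is correct and follows essentially the same route as the paper: Cauchy's formula on $|z|=r_k$, the cubic expansion of Lemma~\ref{lem:local-saddle-taylor} on a short arc around $z_k$ (with the conjugate arc handled by symmetry), the bounds \eqref{eq:separation-near}--\eqref{eq:separation-far} on the rest of the circle, and then \eqref{eq:H-at-saddle-real}, \eqref{eq:exp-xk-estimate}, \eqref{eq:saddle-angle-estimate} and \eqref{eq:Bk-real-size} to size $|Q_k|$. The only difference is cosmetic: you cut off at $T_k=\sqrt{\log k/k}$ rather than the paper's $\sigma_k=k^{-2/5}/\log k$, and since both scales satisfy $k(\log k)^2T^3\to0$ and $T\sqrt{\Re B_k}\to\infty$, either choice works.
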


\begin{proof}
Cauchy's formula on \(|z|=r_k\) gives
\begin{equation}\label{eq:cauchy-saddle-integral}
  \frac{f^{(k)}(0)}{k!}
  =\frac1{2\pi r_k^k}
  \int_{-\pi}^{\pi}
  \exp\{H(r_k\e^{i\vartheta})-ik\vartheta\}
  \dd\vartheta.
\end{equation}
Define
\begin{equation*}\label{eq:delta-k-def}
  \sigma_k:=\frac{k^{-2/5}}{\log k}.
\end{equation*}
Any
choice
\[
  \sigma_k=\frac{k^{-a}}{\log k},
  \qquad \frac13<a<\frac12,
\]
would serve equally well in what follows; the exponent \(2/5\) is chosen only for
definiteness.

We first consider
\[
  |\vartheta-\vartheta_k|\leq\sigma_k.
\]
Writing \(\vartheta=\vartheta_k+t\),
Lemma~\ref{lem:local-saddle-taylor} gives
\[
  H(z_k\e^{it})-ikt
  =H(z_k)-\frac12B_kt^2+R_k(t).
\]
Since
\[
  k(\log k)^2\sigma_k^3
  =\frac{k^{-1/5}}{\log k}
  \longrightarrow0,
\]
we have
\[
  \sup_{|t|\leq\sigma_k}|R_k(t)|=o(1).
\]
Hence \(|\e^{R_k(t)}-1|\ll|R_k(t)|\) uniformly on this interval, and
\begin{align*}
  \int_{-\sigma_k}^{\sigma_k}
  \exp\left\{-\frac12B_kt^2+R_k(t)\right\}\dd t=
  \int_{-\sigma_k}^{\sigma_k}\e^{-B_kt^2/2}\dd t
  +O\left(
    k(\log k)^2
    \int_{-\sigma_k}^{\sigma_k}
    |t|^3\e^{-c k\log k\,t^2}\dd t
  \right).
\end{align*}
Additionally, since
\[
  \sigma_k\sqrt{\Re B_k}
  \asymp
  \frac{k^{1/10}}{\sqrt{\log k}}
  \longrightarrow\infty,
\]
the standard Gaussian tail bound gives
\[
  \int_{|t|>\sigma_k}\e^{-(\Re B_k)t^2/2}\dd t
  \ll
  \frac{\exp\{-(\Re B_k)\sigma_k^2/4\}}{\sqrt{\Re B_k}}
  =
  o\left(
    \frac1{\sqrt{|B_k|}}\sqrt{\frac{\log k}{k}}
  \right).
\]

Moreover,
\[
  \int_{-\infty}^{\infty}
  |t|^3\e^{-c k\log k\,t^2}\dd t
  \ll\frac1{k^2(\log k)^2}.
\]
 Since
\(\Re B_k>0\),
\[
  \int_{-\infty}^{\infty}\e^{-B_kt^2/2}\dd t
  =\sqrt{\frac{2\pi}{B_k}},
\]
with the principal square root. Consequently,
\[
  \int_{-\sigma_k}^{\sigma_k}
  \exp\left\{-\frac12B_kt^2+R_k(t)\right\}\dd t
  =\sqrt{\frac{2\pi}{B_k}}
   \left(
     1+O\left(\sqrt{\frac{\log k}{k}}\right)
   \right).
\]

Since \(z_k=r_k\e^{i\vartheta_k}\), the contribution of this arc to
\eqref{eq:cauchy-saddle-integral} is
\[
  Q_k
  \left(
    1+O\left(\sqrt{\frac{\log k}{k}}\right)
  \right).
\]
The conjugate arc through \(\overline{z_k}\) contributes
\[
  \overline{Q_k}
  \left(
    1+O\left(\sqrt{\frac{\log k}{k}}\right)
  \right).
\]

Using \eqref{eq:separation-near} if \(
  \sigma_k\leq d_k(\vartheta)
  \leq\eta/\log k,
\) and~\eqref{eq:separation-far} if 
\(
  d_k(\vartheta)\geq\frac{\eta}{\log k}
\)
shows that the contribution from the complement of the two saddle arcs is
\[
  o\left(
    |Q_k|\sqrt{\frac{\log k}{k}}
  \right).
\]
Adding the two saddle contributions gives
\[
  \frac{f^{(k)}(0)}{k!}
  =Q_k+\overline{Q_k}
   +O\left(
     |Q_k|\sqrt{\frac{\log k}{k}}
   \right).
\]
Since
\[
  Q_k+\overline{Q_k}=2|Q_k|\cos\theta_k,
\]
we obtain \eqref{eq:f-coefficient-asymptotic}. 

By the definition of \(Q_k\),
\[
  |Q_k|
  =
  \frac{\exp\{\Re H(z_k)\}}
       {r_k^k\sqrt{2\pi|B_k|}}.
\]
Substituting \eqref{eq:H-at-saddle-real},  \eqref{eq:exp-xk-estimate}, \eqref{eq:saddle-coordinate-estimates}, and
\eqref{eq:Bk-real-size} into the definition of \(|Q_k|\) gives
\eqref{eq:Qk-size}.
\end{proof}

It remains to record the slow variation of the phase. For real \(x\)
sufficiently large, put
\[
  z(x):=-\xi(x),
  \qquad
  \mathcal B(x):=\xi(x)(x-\xi(x)),
\]
and define continuously
\begin{equation}\label{eq:continuous-phase}
  \Theta_0(x)
  :=\Im\left\{
    H(z(x))-x\Log z(x)-\frac12\Log(2\pi \mathcal B(x))
  \right\} -\pi x.
\end{equation}
Thus
\[
  \Theta_0(k)\equiv\theta_k-\pi k\pmod{2\pi}.
\]

\begin{lemma}
\label{lem:phase-variation}
One has
\begin{equation}\label{eq:phase-derivative}
  \Theta_0'(x)
  =-\frac{\pi}{\log x}
   +O\!\left(
     \frac1{(\log x)^3}+\frac1x
   \right).
\end{equation}
Consequently, there exist sequences of integers
\[
  k_j^+,k_j^-\longrightarrow\infty
\]
such that
\begin{equation}\label{eq:phase-good-sequences}
  (-1)^{k_j^+-1}\cos\theta_{k_j^+}\longrightarrow1,
  \qquad
  (-1)^{k_j^--1}\cos\theta_{k_j^-}\longrightarrow-1.
\end{equation}
\end{lemma}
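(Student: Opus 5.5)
We compute \(\Theta_0'(x)\) by differentiating each piece of \eqref{eq:continuous-phase} with the chain rule and \eqref{eq:xi-derivative}. The key simplification is the saddle equation. Since \(z(x)=-\xi(x)\) and \(z H'(z)=x\) at \(z=z(x)\), we have
\[
  \frac{d}{dx}\Bigl\{H(z(x))-x\Log z(x)\Bigr\}
  =H'(z)z'-\Log z-\frac{x z'}{z}
  =-\Log z(x).
\]
The first two terms therefore contribute \(-\Im\Log z(x)=-\arg z(x)\). By \eqref{eq:xi-expansion}, \(z(x)=-\log x-i\pi+O(\log x/x)\), and we use the branch with \(\arg z(x)\in(-\pi,-\pi/2)\). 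Writing \(z=-(a+ib)\) with \(a=\log x+O(\log x/x)\) and \(b=\pi+O(\log x/x)\), we get \(\arg z=-\pi+\arctan(b/a)\). Hence
\[
  -\arg z(x)-\pi=-\frac{\pi}{\log x}+O\!\left(\frac1{(\log x)^3}+\frac{1}{x}\right),
\]
using \(\arctan u=u+O(u^3)\) and absorbing the \(O(\log x/x)\) perturbations of \(a,b\) into \(O(1/x)\) after dividing by \(a^2\).

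The remaining term is \(-\tfrac12\Im\frac{d}{dx}\Log\mathcal B(x)\). We have \(\mathcal B'=\xi'(x-\xi)+\xi(1-\xi')=1+\xi(1-\xi')\), with \(\xi'=1/(x-\xi)=O(1/x)\). So \(\mathcal B'/\mathcal B=(1+\xi+O(\log x/x))/(\xi(x-\xi))=O(1/x)\), and this term contributes only \(O(1/x)\). This proves \eqref{eq:phase-derivative}. The one point needing care is that \(\Theta_0\) is defined continuously, so we differentiate a fixed holomorphic branch along the curve. The branches of \(\Log z(x)\) and \(\Log\mathcal B(x)\) are legitimate because \(z(x)\) stays in the third quadrant away from the cut, and \(\Re\mathcal B>0\) by \eqref{eq:Bk-real-size}. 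Everything is then a direct differentiation.

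For the sequences, recall \(\theta_k\equiv\Theta_0(k)+\pi k\). Then
\[
  (-1)^{k-1}\cos\theta_k=-\cos\Theta_0(k)\cdot(-1)^{k}(-1)^{k}=-\cos\Theta_0(k),
\]
since \(\cos(\Theta_0+\pi k)=(-1)^k\cos\Theta_0\). So we need integers \(k\) with \(\Theta_0(k)\) close to \(\pi\pmod{2\pi}\), and separately integers with \(\Theta_0(k)\) close to \(0\pmod{2\pi}\). By \eqref{eq:phase-derivative}, \(\Theta_0'(x)\to0\), so consecutive increments \(\Theta_0(k+1)-\Theta_0(k)\) are \(O(1/\log k)\to0\). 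Also \(\Theta_0(x)\to-\infty\), because \(\int dx/\log x\) diverges. Thus the sequence \(\Theta_0(k)\) decreases through every residue class mod \(2\pi\) infinitely often, with step size tending to zero. For each large \(m\) we pick the first integer \(k\) with \(\Theta_0(k)\le\pi-2\pi m\); then \(|\Theta_0(k)-(\pi-2\pi m)|=O(1/\log k)\). This gives \(k_j^+\) with \((-1)^{k-1}\cos\theta_k\to1\). Targeting \(-2\pi m\) instead gives \(k_j^-\). The main (mild) obstacle is just the bookkeeping of branches in the derivative computation; the selection step is an intermediate-value argument.
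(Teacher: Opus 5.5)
Your proposal is correct and follows the paper's proof essentially step for step. The saddle equation $zH'(z)=x$ cancels the $z'$ terms, leaving $\Theta_0'(x)=-\arg z(x)-\pi-\tfrac12\Im(\mathcal B'/\mathcal B)$. The first part is then handled by the arctan expansion and the second by $\mathcal B'/\mathcal B=O(1/x)$. The sequences come from the eventual monotonicity of $\Theta_0$, its divergence to $-\infty$, and the vanishing unit increments. The only cosmetic difference is that you take the first integer past each level $\pi-2\pi m$ (resp.\ $-2\pi m$), whereas the paper takes the integer nearest to the crossing point; both work equally well.
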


\begin{proof}
Differentiate \eqref{eq:continuous-phase}. The saddle equation~\eqref{eq:saddle-equation-xi}
\[
  z(x)H'(z(x))=1-z(x)-e^{-z(x)} = x
\]
implies
\[
  H'(z(x))z'(x)-x\frac{z'(x)}{z(x)}=0.
\]
Therefore
\[
  \frac{\dd}{\dd x}
  \left\{
    H(z(x))-x\Log z(x)-\frac12\Log(2\pi \mathcal B(x))
  \right\}
  =-\Log z(x)-\frac12\frac{\mathcal B'(x)}{\mathcal B(x)}.
\]
Taking imaginary parts gives
\[
  \Theta_0'(x)
  =-\arg z(x)-\frac12\Im\frac{\mathcal B'(x)}{\mathcal B(x)}-\pi.
\]
By \eqref{eq:xi-expansion},
\[
  z(x)
  =-\log x-i\pi
   +O\left(\frac{\log x}{x}\right).
\]
Consequently,
\[
  \arg z(x)
  =-\pi+\arctan\left(\frac{\pi}{\log x}\right)
   +O\left(\frac1x\right)
\]
and hence
\[
  \arg z(x)
  =-\pi+\frac{\pi}{\log x}
   +O\!\left(
     \frac1{(\log x)^3}+\frac1x
   \right).
\]
Furthermore, using
\[
  \mathcal B(x)=\xi(x)(x-\xi(x))
\]
and \eqref{eq:xi-derivative}, we obtain
\[
  \frac{\mathcal B'(x)}{\mathcal B(x)}
  =\frac{\xi'(x)}{\xi(x)}
   +\frac{1-\xi'(x)}{x-\xi(x)}
  =O\left(\frac1x\right).
\]
It follows that
\[
  \Theta_0'(x)
  =-\frac{\pi}{\log x}
   +O\!\left(
     \frac1{(\log x)^3}+\frac1x
   \right),
\]
which proves \eqref{eq:phase-derivative}.

Thus \(\Theta_0\) is eventually strictly decreasing,
\[
  \Theta_0(x)\longrightarrow-\infty,
\]
and
\[
  \Theta_0(x+1)-\Theta_0(x)\longrightarrow0.
\]
Choose integers \(k_j^+\) nearest to successive solutions of
\[
  \Theta_0(x)=(2m+1)\pi,
\]
and integers \(k_j^-\) nearest to successive solutions of
\[
  \Theta_0(x)=2m\pi.
\]
Because the increment of \(\Theta_0\) over an interval of length one
tends to zero, we have
\[
  \cos\Theta_0(k_j^+)\longrightarrow-1,
  \qquad
  \cos\Theta_0(k_j^-)\longrightarrow1.
\]
Finally,
\[
  \theta_k\equiv\Theta_0(k)+\pi k\pmod{2\pi},
\]
so
\[
  \cos\theta_k=(-1)^k\cos\Theta_0(k).
\]
This proves \eqref{eq:phase-good-sequences}.
\end{proof}
\section{Lower Bound on \texorpdfstring{\(D_k\)}{Dk}}\label{sec:lower-bound-Dk}
In this section, we prove lower bounds on \(D_k\) along appropriate subsequences by replacing $G_p(z)$ with $f(z)$. But before doing that, we need to establish some estimates for $G_p(z)$ to justify the replacement.

Throughout this section, \(p\) and \(q\) denote primes.  The normalized
function \(G_p\) in \eqref{eq:normalized-Gp} has the exact
factorization
\begin{align}
  G_p(z)&=D_p(z)P_p(z),
    \label{eq:G-factorization}\\
  D_p(z)&=\e^{-z}\left(1+p(1-\e^{-\delta_pz})\right),
  \qquad
  \delta_p:=\frac{\log(1+1/p)}{\mathcal L_p},
    \label{eq:D-factor}\\
  P_p(z)&=\prod_{q<p}
  \frac{1-q^{-1}\e^{-z\log q/\mathcal L_p}}{1-q^{-1}}.
    \label{eq:P-factor}
\end{align}
Both factors are nonvanishing on \(|z|<\mathcal L_p\).  Indeed, their
possible zeros have real part \(-\mathcal L_p\).  On this disk we write
\(\Log D_p\) and \(\Log P_p\) for the unique holomorphic logarithms
normalized by
\[
  \Log D_p(0)=\Log P_p(0)=0.
\]
Expanding the Euler factors gives
\begin{equation}\label{eq:log-Pp-expansion}
  \Log P_p(z)
  =\sum_{q<p}\sum_{m\geq1}
  \frac{q^{-m}}m
  \left(1-\e^{-mz\log q/\mathcal L_p}\right).
\end{equation}

We first collect the estimates for \(D_p\) and \(P_p\) that will be
used both in the proof of the approximation of $G_p$ by \(f\) and in the treatment of the
smaller primes.

\begin{lemma}
\label{lem:common-factor-estimates}
There is an absolute constant \(C>0\) such that, for
\(0<\rho\leq\mathcal L_p/2\),
\begin{align}
  \max_{|z|\leq\rho}|D_p(z)|
  &\leq2\e^\rho,
    \label{eq:Dp-coarse-bound}\\
  \max_{|z|\leq\rho}|P_p(z)|
  &\leq
  \exp\left\{
    C\left(\frac{\e^\rho}{\rho}+1+\mathcal L_p\right)
  \right\}.
    \label{eq:Pp-coarse-bound}
\end{align}
Moreover, uniformly for \(|z|\leq\mathcal L_p/4\),
\begin{align}
  \Log P_p(z)
  &=\sum_{q<p}\frac{1-\e^{-z\log q/\mathcal L_p}}q
    +O\!\left(\frac{|z|}{\mathcal L_p}\right),
    \label{eq:Pp-first-order-decomposition}\\
  \Log D_p(z)+z
  &=\log\left(1+\frac{z}{\mathcal L_p}\right)
    +O\!\left(\frac{(1+|z|)^2}{p}\right).
    \label{eq:Dp-refined-expansion}
\end{align}
The second formula holds for all sufficiently large \(p\).
\end{lemma}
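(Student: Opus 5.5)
I will prove the four estimates separately, all by elementary bounds on the individual factors. Write \(w=z/\mathcal L_p\), so \(|w|\le 1/2\) when \(|z|\le\rho\le\mathcal L_p/2\).

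\emph{The bound \eqref{eq:Dp-coarse-bound}.} I use \(D_p(z)=\e^{-z}(1+p(1-\e^{-\delta_pz}))\). Since \(\delta_p\le 1/(p\mathcal L_p)\), we have \(|\delta_p z|\le 1/(2p)\le 1/4\). Hence
\[
  |1-\e^{-\delta_pz}|\le |\delta_pz|\,\e^{|\delta_pz|}\le \frac{\e^{1/4}}{2p},
\]
so the bracket has modulus at most \(1+\e^{1/4}/2<2\). Together with \(|\e^{-z}|\le\e^\rho\) this gives \eqref{eq:Dp-coarse-bound}.

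\emph{The bound \eqref{eq:Pp-coarse-bound}.} I bound each Euler factor by
\[
  \log\bigl|1-q^{-1}\e^{-z\log q/\mathcal L_p}\bigr|-\log(1-q^{-1})
  \le q^{-1}\bigl|1-\e^{-z\log q/\mathcal L_p}\bigr|+O(q^{-2}q^{2\rho/\mathcal L_p}).
\]
Here \(q^{\rho/\mathcal L_p}\le q^{1/2}\), so the \(O\)-terms are \(O(q^{-3/2})\) and sum to \(O(1)\). For the main term I use \(|1-\e^{-v}|\le \min(|v|\e^{|v|},1+\e^{|v|})\) with \(v=z\log q/\mathcal L_p\), and split at \(q\le p^{1/\rho}\), that is, \(\log q\le\mathcal L_p/\rho\).
\begin{itemize}
  \item For small \(q\), the term is \(\ll \rho\log q/(q\mathcal L_p)\). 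By Mertens' theorem these sum to \(\ll (\rho/\mathcal L_p)(\mathcal L_p/\rho)=O(1)\).
  \item For larger \(q\), the term is \(\le q^{-1}(1+\e^{\rho\log q/\mathcal L_p})\). Summing by partial summation with the prime number theorem (or a Chebyshev bound) on the logarithmic scale \(t=\log q/\mathcal L_p\in[1/\rho,1]\) gives \(\ll \log\rho+\int_{1/\rho}^1 \e^{\rho t}\,\dd t/t\ll \mathcal L_p+\e^\rho/\rho+1\), since \(\log\log p\le \mathcal L_p\).
\end{itemize}
This gives \eqref{eq:Pp-coarse-bound}.

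\emph{The expansion \eqref{eq:Pp-first-order-decomposition}.} I start from \eqref{eq:log-Pp-expansion} and separate off the \(m=1\) term. For \(m\ge2\) and \(|z|\le\mathcal L_p/4\), put \(u=mz\log q/\mathcal L_p\), so \(|u|\le m\log q/4\). Then
\[
  \bigl|1-\e^{-u}\bigr|\le |u|\,\e^{|u|}\le \frac{m|z|\log q}{\mathcal L_p}\,q^{m/4}.
\]
The \(m\ge2\) contribution is therefore at most
\[
  \frac{|z|}{\mathcal L_p}\sum_q\log q\sum_{m\ge2}q^{-3m/4}\ll \frac{|z|}{\mathcal L_p},
\]
because \(\sum_q q^{-3/2}\log q<\infty\). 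Both sides are holomorphic, vanish at \(0\), and \(\Log\) is fixed by \(\Log P_p(0)=0\), so the logarithms match with no branch issue.

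\emph{The expansion \eqref{eq:Dp-refined-expansion}.} I have \(\Log D_p(z)+z=\Log(1+p(1-\e^{-\delta_pz}))\). Write \(\delta_p=\frac{1}{p\mathcal L_p}(1+O(1/p))\) and expand
\[
  p(1-\e^{-\delta_pz})=p\delta_pz+O\bigl(p\delta_p^2|z|^2\bigr)=\frac{z}{\mathcal L_p}+O\!\left(\frac{|z|+|z|^2}{p\mathcal L_p}\right).
\]
Since \(|z/\mathcal L_p|\le1/4\), the quantity \(1+z/\mathcal L_p\) stays at distance \(\ge 3/4\) from \(0\). Therefore
\[
  \Log\Bigl(1+\frac{z}{\mathcal L_p}+\epsilon\Bigr)=\log\Bigl(1+\frac{z}{\mathcal L_p}\Bigr)+O(|\epsilon|),
\]
which gives the error \(O((1+|z|)^2/p)\). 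This also requires \(|\epsilon|<1/4\), which holds for large \(p\); that is why the statement is restricted to sufficiently large \(p\).

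\emph{Main obstacle.} The step needing the most care is the coarse bound \eqref{eq:Pp-coarse-bound}. There the large-\(\rho\) growth \(\e^\rho/\rho\) has to come out of the prime sum correctly. I expect this to be handled by the Chebyshev-level estimate \(\sum_{x<q\le x^2}1/q\ll1\) applied over dyadic ranges in \(t\).
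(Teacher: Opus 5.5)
Your proposal is correct and follows essentially the paper's route. It uses the same estimates \(|1-\e^{-u}|\le|u|\e^{|u|}\) for \(D_p\) and for the \(m\ge2\) part of \eqref{eq:log-Pp-expansion}, the same Taylor-plus-\(\Log\) step for \eqref{eq:Dp-refined-expansion}, and a Chebyshev/partial-summation bound \(\sum_{q<p}q^{-1+\rho/\mathcal L_p}\le A\log(2+\mathcal L_p)+B\e^\rho/\rho\). The paper does not make your split at \(q\le p^{1/\rho}\); it simply uses \(|1-\e^{-v}|\le 1+|\e^{-v}|\) for all \(q<p\).

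There are two harmless slips.
\begin{itemize}
\item When \(\rho=\mathcal L_p/2\), your quadratic error \(q^{-2}q^{2\rho/\mathcal L_p}\) is only \(O(q^{-1})\), not \(O(q^{-3/2})\). It therefore sums to \(O(\log\log p)\) rather than \(O(1)\), which is still absorbed by \(C(1+\mathcal L_p)\), exactly as in the paper.
\item Literally dyadic ranges in \(t\) would only give \(\e^\rho\), not \(\e^\rho/\rho\). It is the partial summation in your main text (equivalently, ranges of length \(1/\rho\) in \(t\)) that gives the stated bound.
\end{itemize}
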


\begin{proof}
Since \(p\log(1+1/p)\leq1\), for \(|z|\leq\rho\) we have
\[
  p\delta_p|z|\leq\frac{\rho}{\mathcal L_p}\leq\frac12.
\]
Thus \(|1-\e^{-u}|\leq|u|\e^{|u|}\) and
\eqref{eq:D-factor} give \eqref{eq:Dp-coarse-bound}.

For the terms with \(m=1\) in \eqref{eq:log-Pp-expansion}, partial
summation and Chebyshev's estimate give absolute constants \(A,B>0\)
such that
\[
  \sum_{q<p}\frac{\exp(\rho\log q/\mathcal L_p)}q
  \leq A\log(2+\mathcal L_p)+B\frac{\e^\rho}{\rho}.
\]
For \(m\geq2\), the inequality \(\rho/\mathcal L_p\leq1/2\) gives
\[
  q^{-m}\left|\e^{-mz\log q/\mathcal L_p}\right|
  \leq q^{-m/2}.
\]
Together with \(\sum_{q<p}q^{-1}\ll1+\mathcal L_p\), these estimates
prove \eqref{eq:Pp-coarse-bound}.

Separating the term \(m=1\) in \eqref{eq:log-Pp-expansion} and using
\(|1-\e^{-u}|\leq |u|\e^{|u|}\) once again, we obtain, for
\(|z|\leq\mathcal L_p/4\),
\begin{align*}
  &\sum_{q<p}\sum_{m\geq2}\frac{q^{-m}}m
   \left|1-\e^{-mz\log q/\mathcal L_p}\right|\\
  &\qquad\ll
  \frac{|z|}{\mathcal L_p}
  \sum_q(\log q)\sum_{m\geq2}q^{-3m/4}
  \ll\frac{|z|}{\mathcal L_p},
\end{align*}
which proves \eqref{eq:Pp-first-order-decomposition}.  Finally, observe that
\[
  p\delta_p
  =\frac1{\mathcal L_p}
   +O\!\left(\frac1{p\mathcal L_p}\right).
\]
Expanding $e^{-\delta_pz}$ in \eqref{eq:D-factor} using Taylor's formula gives
\[
  1+p(1-\e^{-\delta_pz})
  =1+\frac{z}{\mathcal L_p}
   +O\!\left(\frac{(1+|z|)^2}{p}\right).
\]
Since \(|1+z/\mathcal L_p|\geq3/4\), taking the $\Log$
proves \eqref{eq:Dp-refined-expansion}.
\end{proof}
We can immediately deduce a Cauchy estimate for the derivatives of \(G_p\) at the origin.
\begin{lemma}
\label{lem:general-circle-bound}
There is an absolute constant \(C_*>0\) such that, for every prime
\(p\), every \(j\geq0\), and every
\(0<\rho\leq\mathcal L_p/2\),
\begin{equation*}\label{eq:general-circle-bound}
  |G_p^{(j)}(0)|
  \leq\frac{2j!}{\rho^j}
  \exp\left\{
    C_*\left(
      \rho+\frac{\e^\rho}{\rho}+1+\mathcal L_p
    \right)
  \right\}.
\end{equation*}
\end{lemma}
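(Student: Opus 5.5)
The bound follows from Cauchy's estimate on the circle $|z|=\rho$, once $|G_p(z)|$ is controlled there by Lemma~\ref{lem:common-factor-estimates}. Since $T_p$ is entire in $w$, $G_p$ is entire in $z$. Cauchy's inequality gives
\[
  |G_p^{(j)}(0)|\leq \frac{j!}{\rho^j}\max_{|z|=\rho}|G_p(z)|
  \leq \frac{j!}{\rho^j}\max_{|z|\leq\rho}|D_p(z)|\,\max_{|z|\leq\rho}|P_p(z)|,
\]
where we used the factorization \eqref{eq:G-factorization}, $G_p=D_pP_p$.

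Because $0<\rho\leq\mathcal L_p/2$, both coarse bounds apply. By \eqref{eq:Dp-coarse-bound}, the factor $D_p$ contributes at most $2\e^{\rho}$. By \eqref{eq:Pp-coarse-bound}, the factor $P_p$ contributes at most $\exp\{C(\e^\rho/\rho+1+\mathcal L_p)\}$. Multiplying these gives
\[
  |G_p^{(j)}(0)|\leq \frac{2j!}{\rho^j}\exp\Bigl\{\rho+C\Bigl(\frac{\e^\rho}{\rho}+1+\mathcal L_p\Bigr)\Bigr\}.
\]
Taking $C_*:=\max\{1,C\}$ yields the stated inequality.

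The only point to check is that the small primes are covered with an absolute constant. For $p=2$ there is no admissible $\rho$ issue beyond $\rho\leq(\log 2)/2$. For such $\rho$ and all small $p$, the coarse bounds in Lemma~\ref{lem:common-factor-estimates} were proved uniformly in $p$: they rely only on $p\delta_p\rho\leq1/2$, Chebyshev's estimate, and $q^{-m}|\e^{-mz\log q/\mathcal L_p}|\leq q^{-m/2}$. So no separate case analysis is needed. There is no real obstacle; the argument is a direct combination of Cauchy's estimate with the two coarse bounds, with the factor $2$ coming from \eqref{eq:Dp-coarse-bound}.
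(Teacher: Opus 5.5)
Your proposal is correct and is the paper's argument: bound $|G_p|=|D_p||P_p|$ on $|z|=\rho$ by multiplying \eqref{eq:Dp-coarse-bound} and \eqref{eq:Pp-coarse-bound}, then apply Cauchy's estimate. Taking $C_*=\max\{1,C\}$ absorbs the $\e^\rho$ factor from $D_p$ into the exponent, and the coarse bounds are indeed uniform in $p$, so no separate treatment of small primes is needed.
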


\begin{proof}
Multiply \eqref{eq:Dp-coarse-bound} and
\eqref{eq:Pp-coarse-bound}, and apply Cauchy's estimate on
\(|z|=\rho\).
\end{proof}

We next compare \(G_p\) with the limiting function \(f\).

\begin{lemma}
\label{lem:uniform-Gp}
For every fixed \(C_0>0\), there are constants \(A_0,L_0>0\) such
that, if \(\mathcal L_p\geq L_0\), there is a function \(\Delta_p\),
holomorphic on a neighborhood of
\(|z|\leq C_0\log\mathcal L_p\), satisfying
\[
  \exp\Delta_p(z)=\frac{G_p(z)}{f(z)},
  \qquad \Delta_p(0)=0,
\]
and
\begin{equation}\label{eq:Delta-p-bound}
  |\Delta_p(z)|
  \leq A_0\frac{\log\mathcal L_p}{\mathcal L_p}.
\end{equation}
\end{lemma}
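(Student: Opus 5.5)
We will define \(\Delta_p(z):=\Log D_p(z)+\Log P_p(z)-H(z)\) on the disk \(|z|\le C_0\log\mathcal L_p\). This is legitimate because, for \(\mathcal L_p\ge L_0\) large, this disk lies inside \(|z|\le\mathcal L_p/4\), where both logarithms are defined and holomorphic (Lemma~\ref{lem:common-factor-estimates} and the surrounding discussion). Since \(f=\e^{H}\) and \(G_p=D_pP_p\) by \eqref{eq:G-factorization}, we get \(\exp\Delta_p=G_p/f\) and \(\Delta_p(0)=0\) automatically. It remains to bound \(|\Delta_p(z)|\) uniformly on the disk.

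\emph{The difference factor.} By \eqref{eq:Dp-refined-expansion},
\[
  \Log D_p(z)+z=\log(1+z/\mathcal L_p)+O\bigl((1+|z|)^2/p\bigr).
\]
Here \(|\log(1+z/\mathcal L_p)|\ll|z|/\mathcal L_p\ll\log\mathcal L_p/\mathcal L_p\), and \((1+|z|)^2/p\ll(\log\mathcal L_p)^2/p\), which is far smaller. So \(\Log D_p(z)=-z+O(\log\mathcal L_p/\mathcal L_p)\).

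\emph{The Euler product.} By \eqref{eq:Pp-first-order-decomposition},
\[
  \Log P_p(z)=S_p(z)+O(\log\mathcal L_p/\mathcal L_p),\qquad S_p(z):=\sum_{q<p}\frac{1-\e^{-z\log q/\mathcal L_p}}{q}.
\]
It therefore suffices to show \(S_p(z)-\Ein(z)=O(\log\mathcal L_p/\mathcal L_p)\) uniformly for \(|z|\le C_0\log\mathcal L_p\). Put \(g_z(t):=(1-\e^{-zt})/t\), so that \(\Ein(z)=\int_0^1g_z(t)\dd t\), and write
\[
  S_p(z)=\int_{2^-}^{p^-}\bigl(1-\e^{-z\log u/\mathcal L_p}\bigr)\dd\pi_1(u),\qquad \pi_1(u):=\sum_{q\le u}1/q.
\]
Mertens' theorem in the strong form \(\pi_1(u)=\log\log u+M+O(1/\log u)\) (or \(O(\exp(-c\sqrt{\log u}))\), via PNT) splits this into a main term \(\int\bigl(1-\e^{-z\log u/\mathcal L_p}\bigr)\dd(\log\log u)\) plus an error handled by partial summation. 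Substituting \(t=\log u/\mathcal L_p\), the main term is exactly \(\int_{\log2/\mathcal L_p}^1 g_z(t)\dd t\). The missing piece \(\int_0^{\log 2/\mathcal L_p}g_z\) is \(O(|z|/\mathcal L_p)\), since \(|g_z(t)|\le|z|\e^{|z|t}\ll|z|\) there.

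\emph{The error term, which is the main obstacle.} The partial summation error is
\[
  \Bigl[\bigl(1-\e^{-z\log u/\mathcal L_p}\bigr)R(u)\Bigr]-\int R(u)\dd_u\bigl(1-\e^{-z\log u/\mathcal L_p}\bigr),\qquad R(u)=O(1/\log u).
\]
The boundary term at \(u=p\) is \(\ll \e^{|z|}/\mathcal L_p\le\mathcal L_p^{C_0}/\mathcal L_p\). This is too large for \(C_0\ge1\) if bounded crudely, so the crude Mertens error \(O(1/\log u)\) is not enough. I would instead use the PNT form \(R(u)\ll\exp(-c\sqrt{\log u})\), or note that at the saddle radius one really has \(\Re(-z)\le|z|\). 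With the PNT error, the boundary term is \(\ll\mathcal L_p^{C_0}\e^{-c\sqrt{\mathcal L_p}}\), which is negligible. The integral term becomes
\[
  \int_1^{\mathcal L_p}\frac{|z|}{\mathcal L_p}\,\e^{|z|s/\mathcal L_p}\,\e^{-c\sqrt s}\dd s,\qquad s=\log u,
\]
and this is \(\ll|z|/\mathcal L_p\). The reason is that \(|z|s/\mathcal L_p\le C_0\log\mathcal L_p\cdot s/\mathcal L_p\), which is dominated by \(c\sqrt s/2\) for \(s\le\mathcal L_p\) once \(L_0\) is large, since \(\log\mathcal L_p\,s/\mathcal L_p\le\sqrt s\log\mathcal L_p/\sqrt{\mathcal L_p}\). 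This exponential-growth issue is the delicate point: the bound \(\e^{|z|}\) on the range of \(z\) must be beaten by the PNT saving, and that is exactly why the radius is restricted to \(C_0\log\mathcal L_p\).

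Collecting the three pieces gives \(\Delta_p(z)=O(\log\mathcal L_p/\mathcal L_p)\) with constants depending only on \(C_0\). This yields \eqref{eq:Delta-p-bound}, after slightly shrinking the disk to obtain holomorphy on a neighborhood.
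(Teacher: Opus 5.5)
Your proposal is correct and follows essentially the same route as the paper. It uses the same definition \(\Delta_p=\Log D_p+\Log P_p-H\) and the same use of Lemma~\ref{lem:common-factor-estimates}. It then compares \(\sum_{q<p}(1-\e^{-z\log q/\mathcal L_p})/q\) with \(\Ein(z)\) by partial summation, where a PNT-strength remainder \(\e^{-c\sqrt{s}}\) is needed to beat the growth \(\e^{|z|}\le\mathcal L_p^{C_0}\).

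The only difference is cosmetic: you integrate \(1-\e^{-zt}\) against \(\dd\sum_{q\le u}1/q\), using Mertens' second theorem with PNT error. The paper instead integrates the kernel \((1-\e^{-zt})/t\) against \(\sum_{\log q\le y}\log q/q=y+C_1+\mathcal Q(y)\). Also, no shrinking of the disk is needed, since both logarithms are holomorphic on \(|z|<\mathcal L_p\).
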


\begin{proof}
For sufficiently large \(p\), the disk in the statement lies in
\(|z|\leq\mathcal L_p/4\), and we may put
\begin{align*}
  \Delta_p(z)
  :=\Log D_p(z)+\Log P_p(z)-H(z)\\
  =\Log D_p(z)+\Log P_p(z)-\Ein(z)+z.
\end{align*}
In view of Lemma~\ref{lem:common-factor-estimates}, it suffices to estimate the term
\[
  \sum_{q<p}\frac{1-\e^{-z\log q/\mathcal L_p}}q
\]
in \eqref{eq:log-Pp-expansion}.  Define
\[
  \mathcal P(y):=\sum_{\log q\leq y}\frac{\log q}{q},
  \qquad
  \vartheta(x):=\sum_{q\leq x}\log q.
\]
Partial summation gives
\[
  \mathcal P(y)
  =\e^{-y}\vartheta(\e^y)
   +\int_2^{\e^y}\frac{\vartheta(t)}{t^2}\dd t.
\]
The quantitative prime number theorem (see \cite[Theorem~6.9, equation~(6.13)]{MontgomeryVaughan2007}) implies that, for some
\(\kappa>0\),
\begin{equation}\label{eq:weighted-PNT}
  \mathcal P(y)=y+C_1+\mathcal Q(y),
  \qquad
  \mathcal Q(y)
  =O\bigl((1+y)\e^{-\kappa\sqrt y}\bigr).
\end{equation}
Put
\[
  \varphi_z(t):=
  \begin{cases}
    (1-\e^{-zt})/t,&t\neq0,\\
    z,&t=0.
  \end{cases}
\]
Stieltjes integration gives
\begin{align*}
  \sum_{q<p}\frac{1-\e^{-z\log q/\mathcal L_p}}q
  &\ =\frac1{\mathcal L_p}
  \int_{0-}^{\mathcal L_p-}
  \varphi_z(y/\mathcal L_p)\dd\mathcal P(y)\\
  &=\Ein(z)+\frac{C_1z}{\mathcal L_p}
   +\frac{\varphi_z(1)\mathcal Q(\mathcal L_p-)}{\mathcal L_p}
   -\frac1{\mathcal L_p^2}\int_0^{\mathcal L_p}
    \varphi_z'(y/\mathcal L_p)\mathcal Q(y)\dd y.
\end{align*}
Since
\[
  |\varphi_z'(t)|\leq\frac{|z|^2}{2}\e^{|z|t}
  \qquad(0\leq t\leq1),
\]
\eqref{eq:weighted-PNT} yields, uniformly for
\(|z|\leq C_0\log\mathcal L_p\),
\begin{equation}\label{eq:first-order-Ein-approximation}
  \sum_{q<p}\frac{1-\e^{-z\log q/\mathcal L_p}}q
  =\Ein(z)
   +O_{C_0}\!\left(\frac{\log\mathcal L_p}{\mathcal L_p}\right).
\end{equation}
Combining \eqref{eq:first-order-Ein-approximation} with
\eqref{eq:Pp-first-order-decomposition} and
\eqref{eq:Dp-refined-expansion} proves
\eqref{eq:Delta-p-bound}. 
\end{proof}

As an application of Lemma~\ref{lem:uniform-Gp}, we now get closeness of derivatives of \(G_p\) and \(f\) at the origin.

\begin{lemma}
\label{lem:derivative-approximation}
Uniformly for primes satisfying \(\mathcal L_p\geq {\frac{k}{\log k}}\), \begin{equation*}\label{eq:derivative-approximation} G_p^{(k)}(0)-f^{(k)}(0)=o(2k!|Q_k|). \end{equation*}
\end{lemma}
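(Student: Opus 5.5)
We write $G_p-f=f(\e^{\Delta_p}-1)$ and apply Cauchy's formula on the saddle circle $|z|=r_k$:
\[
  \frac{G_p^{(k)}(0)-f^{(k)}(0)}{k!}
  =\frac{1}{2\pi i}\int_{|z|=r_k}\frac{f(z)\bigl(\e^{\Delta_p(z)}-1\bigr)}{z^{k+1}}\dd z .
\]
The hypothesis $\mathcal L_p\ge k/\log k$ gives $\log\mathcal L_p\ge \log k-\log\log k$. Hence $r_k=\log k+O(1/\log k)\le 2\log\mathcal L_p$ for large $k$, and the circle lies inside the disk $|z|\le C_0\log\mathcal L_p$ of Lemma~\ref{lem:uniform-Gp} with $C_0=2$. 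On that circle Lemma~\ref{lem:uniform-Gp} gives
\[
  |\e^{\Delta_p(z)}-1|\ll \frac{\log\mathcal L_p}{\mathcal L_p}
  \ll \frac{(\log k)^2}{k},
\]
uniformly in $p$. We use that $x\mapsto \log x/x$ is decreasing, so the bound is worst at $\mathcal L_p=k/\log k$.

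It then suffices to bound
\[
  \varepsilon_k\cdot\frac{1}{2\pi r_k^k}\int_{-\pi}^{\pi}\exp\{\Re H(r_k\e^{i\vartheta})\}\dd\vartheta,
  \qquad \varepsilon_k:=\sup_{p}\sup_{|z|=r_k}|\e^{\Delta_p}-1| .
\]
We estimate the absolute integral exactly as in the proof of Proposition~\ref{prop:f-saddle}. On the arcs $d_k(\vartheta)\le\eta/\log k$, \eqref{eq:separation-near} bounds the integrand by a Gaussian $\exp\{\Re H(z_k)-ck\log k\,d_k^2\}$, contributing $\ll \e^{\Re H(z_k)}/\sqrt{k\log k}$. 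On the rest, \eqref{eq:separation-far} gives $\e^{\Re H(z_k)-ck/(\log k)^2}$, which is negligible. Comparing with $|Q_k|=\e^{\Re H(z_k)}/(r_k^k\sqrt{2\pi|B_k|})$ and using $|B_k|\asymp k\log k$ from \eqref{eq:Bk-real-size}, we get
\[
  \frac{1}{2\pi r_k^k}\int_{-\pi}^{\pi}\e^{\Re H}\dd\vartheta\ll |Q_k|.
\]
Therefore
\[
  |G_p^{(k)}(0)-f^{(k)}(0)|\ll k!\,|Q_k|\,\frac{(\log k)^2}{k}=o(2k!|Q_k|),
\]
uniformly in such $p$. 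This is the claim.

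The main point to check is that the circle really sits inside the region where Lemma~\ref{lem:uniform-Gp} applies uniformly, that is, $r_k\le C_0\log\mathcal L_p$ and $\mathcal L_p\ge L_0$. Both hold for $k$ large since $\log(k/\log k)\sim\log k$. We must also handle the fact that the constant $A_0$ in Lemma~\ref{lem:uniform-Gp} depends on $C_0$, which is fixed here. The one genuine subtlety is that Lemma~\ref{lem:saddle-separation} controls $\Re H$, so it bounds $|f|$ rather than the oscillatory integral. We therefore lose the cancellation between the two saddles, but only up to a constant factor. The final smallness comes entirely from $\varepsilon_k\to0$, which is why the absolute-value bound suffices. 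The result does not identify $\cos\theta_k$; it only gives an error of size $o(|Q_k|)$.
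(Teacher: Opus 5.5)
Your argument is correct and follows the paper's proof: Cauchy's formula on the saddle circle \(|z|=r_k\), with \(\varepsilon_k\ll(\log k)^2/k\) supplied by Lemma~\ref{lem:uniform-Gp} at \(C_0=2\). The only difference is the last step: the paper uses \(\max_{|z|=r_k}|f(z)|\le|f(z_k)|\) (from Lemma~\ref{lem:saddle-separation}) over the whole circle, which loses a factor \(\sqrt{|B_k|}\asymp\sqrt{k\log k}\) and gives \(\ll k!\,|Q_k|(\log k)^{5/2}/\sqrt{k}\), whereas your integration of the Gaussian decay from \eqref{eq:separation-near} recovers that factor and yields the sharper \((\log k)^2/k\), a harmless refinement that is not needed.
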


\begin{proof}
Take \(C_0=2\) in Lemma~\ref{lem:uniform-Gp}. For all sufficiently
large \(k\),
\[
  r_k
  \leq 2\log\frac{k}{\log k}
  \leq 2\log\mathcal L_p.
\]
Hence, uniformly for primes satisfying
\(\mathcal L_p\geq k/\log k\),
\[
  \sup_{|z|=r_k}|\Delta_p(z)|
  \ll
  \frac{\log(k/\log k)}{k/\log k}
  \ll
  \frac{(\log k)^2}{k}
  =o(1).
\]
It follows that, on writing
\[
  \epsilon_k
  :=
  \sup_{\substack{\mathcal L_p\geq k/\log k\\|z|=r_k}}
  \left|\e^{\Delta_p(z)}-1\right|,
\]
we have
\[
  \epsilon_k\ll \frac{(\log k)^2}{k}.
\]

Since \(G_p(z)=f(z)\e^{\Delta_p(z)}\), Cauchy's formula gives
\[
  |G_p^{(k)}(0)-f^{(k)}(0)|
  \leq
  \epsilon_k
  \frac{k!}{r_k^k}
  \max_{|z|=r_k}|f(z)|.
\]
By Lemma~\ref{lem:saddle-separation}, the maximum of
\(\Re H(z)\) on \(|z|=r_k\) is attained at the two conjugate saddle
points. Thus,
\[
  \max_{|z|=r_k}|f(z)|
  \leq |f(z_k)|.
\]
Consequently,
\[
  |G_p^{(k)}(0)-f^{(k)}(0)|
  \leq
  \epsilon_k
  \frac{k!}{r_k^k}
  |f(z_k)|.
\]
Using~\eqref{eq:Qk-Ak-def} and~\eqref{eq:Bk-real-size}, 
we obtain
\[
  |G_p^{(k)}(0)-f^{(k)}(0)|
  \ll
  \frac{(\log k)^{5/2}}{\sqrt{k}}\,
  k!|Q_k|
  =
  o(k!|Q_k|).
\]
\end{proof}

The following lemma allows us to estimate moments of $\mathcal L_p$ weighted by $T_p(1)$.
\begin{lemma}
\label{lem:weighted-prime-moment}
As \(k\to\infty\),
\begin{equation*}\label{eq:weighted-prime-moment}
  \sum_pT_p(1)\mathcal L_p^k
  =\bigl(\e^{-\gamma}+o(1)\bigr)\Gamma(k-1).
\end{equation*}
Additionally,
\begin{equation*}\label{eq:small-weight-tail}
  \sum_{\mathcal L_p<\frac{k}{\log k}}T_p(1)\mathcal L_p^k
  =o\bigl(\Gamma(k-1)\bigr).
\end{equation*}
\end{lemma}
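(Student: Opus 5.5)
We compute the weighted moment by treating \(T_p(1)=V(p)/(p(p+1))\) as a smooth density in the variable \(y=\mathcal L_p\) and comparing it with \(\e^{-\gamma}y^{-1}\e^{-y}\). The moment \(\int y^{k}\cdot y^{-1}\e^{-y}\,\dd y\) is \(\Gamma(k)\), which is not the target. The extra factor \(1/y\) that lowers this to \(\Gamma(k-1)\) will come from the prime-counting density, so we keep track of both factors.

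\emph{Step 1 (Mertens).} Mertens' product theorem gives
\[
  V(p)=\prod_{q<p}(1-q^{-1})=\frac{\e^{-\gamma}}{\mathcal L_p}\bigl(1+\epsilon(p)\bigr),
  \qquad \epsilon(p)\to0 .
\]
Also \(1/(p(p+1))=p^{-2}(1+O(1/p))\). Hence
\[
  T_p(1)\mathcal L_p^k=\e^{-\gamma}\,(1+o_{p\to\infty}(1))\,\frac{\mathcal L_p^{k-1}}{p^2}.
\]

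\emph{Step 2 (weighted PNT).} Write
\[
  \sum_p \frac{\mathcal L_p^{k-1}}{p^2}=\int \frac{y^{k-2}}{\e^{y}}\,\dd\mathcal P(y),
\]
where \(\mathcal P(y)=\sum_{\log q\le y}\log q/q\) as in \eqref{eq:weighted-PNT}. Stieltjes integration with \(\mathcal P(y)=y+C_1+\mathcal Q(y)\) gives the main term
\[
  \int_0^\infty y^{k-2}\e^{-y}\,\dd y=\Gamma(k-1).
\]
The error terms are bounded by \(\int |\mathcal Q(y)|\,\bigl|(y^{k-2}\e^{-y})'\bigr|\,\dd y\). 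Since \((y^{k-2}\e^{-y})'=y^{k-3}\e^{-y}(k-2-y)\) and \(\mathcal Q(y)\ll(1+y)\e^{-\kappa\sqrt y}\), this is at most
\[
  \ll \int_0^\infty y^{k-3}(k+y)(1+y)\e^{-y-\kappa\sqrt y}\,\dd y .
\]
The weight \(y^{k}\e^{-y}\) concentrates near \(y\approx k\), where the factor \(\e^{-\kappa\sqrt y}\approx \e^{-\kappa\sqrt k}\) beats the polynomial loss \(O(k)\) relative to \(\Gamma(k-1)\). Near \(y=0\) the terms are trivially tiny relative to \(\Gamma(k-1)\). So the error is \(o(\Gamma(k-1))\).

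\emph{Step 3 (the \(o(1)\) factor from Step 1).} Split at \(\mathcal L_p=Y\) for a large fixed \(Y\). The primes with \(\mathcal L_p\le Y\) contribute \(\ll Y^{k}\), which is \(o(\Gamma(k-1))\). On \(\mathcal L_p>Y\) the factor \(1+\epsilon(p)\) lies within \(\varepsilon\) of \(1\), and the positive sum is comparable to the Step 2 integral. Letting \(Y\to\infty\) gives the first assertion.

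\emph{Step 4 (the tail \(\mathcal L_p<k/\log k\)).} Bound \(T_p(1)\mathcal L_p^k\ll \mathcal L_p^{k-1}p^{-2}\) and use \(\dd\mathcal P\ll \dd y\) (Chebyshev). This gives
\[
  \ll \int_0^{k/\log k} y^{k-2}\e^{-y}\,\dd y \le (k/\log k)^{k-1}.
\]
By Stirling, \(\Gamma(k-1)\ge (k/\e)^{k-2}\). The ratio of the two is at most \(k\,(\e/\log k)^{k}\to0\).

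The main obstacle is Step 2: showing the PNT error \(\mathcal Q\) is negligible uniformly in \(k\) against the sharply peaked weight. If the derivative bound looks delicate near \(y\approx k\), the fallback is to bound directly by the peak, using \(\e^{-\kappa\sqrt{y}}\le \e^{-\kappa\sqrt{k}/2}\) for \(y\ge k/4\). The region \(y<k/4\) is handled by the crude estimate of Step 4.
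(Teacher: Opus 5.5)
Your proposal is correct and is essentially the paper's argument. Mertens reduces everything to $\sum_p\mathcal L_p^{k-1}/p^2$, a Stieltjes integration against a quantitative PNT gives $\Gamma(k-1)$ with an error killed by $\e^{-\kappa\sqrt{k}}$ near $y\approx k$, and the left tail is bounded crudely by $(k/\log k)^{k-1}$; integrating against $\mathcal P(y)$ instead of $\vartheta(\e^t)$ and absorbing the Mertens $o(1)$ with a fixed cutoff $Y$ instead of the cutoff $k/\log k$ are cosmetic differences.

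One small slip: the ratio in Step~4 is $k\,\e^{k-2}/(\log k)^{k-1}$, not at most $k(\e/\log k)^k$. It is larger than your bound by a factor $\log k/\e^2$, but it still tends to $0$.
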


\begin{proof}
Mertens' product theorem gives
\begin{equation}\label{eq:mertens-weight-uniform}
  V(p)\log p=\e^{-\gamma}+o(1).
\end{equation}
We first evaluate
\[
  S_k:=\sum_p\frac{(\log p)^{k-1}}{p^2}.
\]
Writing \(\vartheta(x)=\sum_{p\leq x}\log p\), we have the exact
Stieltjes representation
\[
  S_k
  =\int_{(\log2)^-}^\infty t^{k-2}\e^{-2t}
    \dd\vartheta(\e^t).
\]
The quantitative prime number theorem gives
\[
  \vartheta(\e^t)
  =\e^t+O\bigl(\e^t\e^{-\kappa\sqrt t}\bigr)
\]
for some \(\kappa>0\).  Integration by parts therefore yields
\begin{align*}
  S_k
  =\int_{(\log2)^-}^\infty t^{k-2}\e^{-t}\dd t+O\left(
    \int_{(\log2)^-}^\infty
    t^{k-2}\e^{-t}\e^{-\kappa\sqrt t}
    \left(1+\frac{k}{t}\right)\dd t
  \right).
\end{align*}
The error is \(o(\Gamma(k-1))\): on
\(k/2\leq t\leq2k\) this follows from the factor
\(\e^{-\kappa\sqrt{k/2}}\), while the two complementary ranges are
exponentially small relative to the gamma integral.  Thus
\begin{equation}\label{eq:prime-gamma-moment}
  S_k=(1+o(1))\Gamma(k-1).
\end{equation}
The same argument, or simply
\[
  \int_0^{{\frac{k}{\log k}}}t^{k-2}\e^{-t}\dd t
  \leq\frac{1}{k-1}\Bigl(\frac{k}{\log k}\Bigr)^{k-1},
\]
shows that
\begin{equation}\label{eq:prime-gamma-left-tail}
  \sum_{\mathcal L_p<{\frac{k}{\log k}}}
  \frac{\mathcal L_p^{k-1}}{p^2}
  =o(\Gamma(k-1)).
\end{equation}

Now
\[
  T_p(1)\mathcal L_p^k
  =\frac{V(p)\mathcal L_p^k}{p(p+1)}.
\]
The estimate in \eqref{eq:mertens-weight-uniform} and
\eqref{eq:prime-gamma-left-tail} show that the contribution of
\(\mathcal L_p<{\frac{k}{\log k}}\) is \(o(\Gamma(k-1))\).  On the complementary
range, \({\frac{k}{\log k}}\to\infty\), so the estimate in
\eqref{eq:mertens-weight-uniform} and
\(p/(p+1)=1+o(1)\) hold uniformly.  Hence
\[
  \sum_{\mathcal L_p\geq {\frac{k}{\log k}}}T_p(1)\mathcal L_p^k
  =(\e^{-\gamma}+o(1))
  \sum_{\mathcal L_p\geq {\frac{k}{\log k}}}
  \frac{\mathcal L_p^{k-1}}{p^2}.
\]
Equations \eqref{eq:prime-gamma-moment} and
\eqref{eq:prime-gamma-left-tail} prove both assertions.
\end{proof}

We first show that the contribution of the primes $p$ with \(\mathcal L_p< {\frac{k}{\log k}}\) is negligible in the evaluation of \(D_k\).

\begin{lemma}
\label{lem:smaller-primes}
As \(k\to\infty\),
\begin{equation}\label{eq:smaller-primes}
  \sum_{\mathcal L_p<{\frac{k}{\log k}}}
  T_p(1)\mathcal L_p^k|G_p^{(k)}(0)|
  =o\bigl(\Gamma(k-1)2k!|Q_k|\bigr).
\end{equation}
\end{lemma}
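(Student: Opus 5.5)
The plan is to bound every term in \eqref{eq:smaller-primes} crudely with Lemma~\ref{lem:general-circle-bound}, choosing the radius \(\rho\) according to the size of \(\mathcal L_p\). The point is that we do not need any cancellation here. In the range \(\mathcal L_p<k/\log k\) the weight satisfies \(\mathcal L_p^k\le (k/\log k)^k\). Against the target \(\Gamma(k-1)\approx (k/\e)^k\), this already gains a factor of roughly \((\e/\log k)^k=\exp\{-k\log\log k+O(k)\}\). That gain will absorb every loss from a slightly suboptimal radius.

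First I record the size of the target. By \eqref{eq:Qk-size} and Stirling,
\[
  \Gamma(k-1)\,2k!\,|Q_k|
  =k!\left(\frac{k}{\e}\right)^k(\log k)^{-k}\exp\{O(k/\log k)\}.
\]
I also use the trivial bound \(T_p(1)\le 1/p^2\). I split the primes into two ranges.

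\emph{Range (i): \(\mathcal L_p<2\log k\).} Take \(\rho=\mathcal L_p/2\) (for \(p=2\), any fixed admissible \(\rho\)). Then \(\e^\rho/\rho\ll \sqrt k\) and \(\mathcal L_p^k\rho^{-k}=2^k\). Lemma~\ref{lem:general-circle-bound} then gives
\[
  T_p(1)\mathcal L_p^k|G_p^{(k)}(0)|\le 2k!\,2^k\e^{O(\sqrt k)}p^{-2}.
\]
Summing over \(p\) gives \(k!\,\e^{O(k)}\). This is smaller than the target by a factor of \((k/(\e\log k))^{-k}\e^{O(k/\log k)}\to0\).

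\emph{Range (ii): \(2\log k\le\mathcal L_p<k/\log k\).} Take \(\rho:=\log k-\log\log k\), which satisfies \(\rho\le\mathcal L_p/2\). Then
\[
  \e^\rho/\rho\ll k/(\log k)^2,\qquad
  \rho^{-k}=(\log k)^{-k}\exp\{O(k\log\log k/\log k)\},\qquad
  \e^{C_*(1+\mathcal L_p)}\le \e^{O(k/\log k)}.
\]
Lemma~\ref{lem:general-circle-bound} therefore gives
\[
  T_p(1)\mathcal L_p^k|G_p^{(k)}(0)|
  \le k!(\log k)^{-k}\e^{O(k\log\log k/\log k)}\,\frac{\mathcal L_p^k}{p^2}.
\]
By \(\mathcal L_p^k\le(k/\log k)^k\) and \(\sum_p 1/p^2<\infty\), the sum over this range is at most
\[
  k!(\log k)^{-k}\left(\frac{k}{\log k}\right)^k\e^{o(k)}.
\]
Dividing by the target leaves \((\e/\log k)^k\e^{o(k)}\to0\).

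The only step needing care is the choice of \(\rho\) in range (ii). It must be close enough to \(r_k\approx\log k\) that \(\rho^{-k}\) loses only \(\e^{o(k\log\log k)}\) relative to \((\log k)^{-k}\). At the same time it must be small enough that \(\e^\rho/\rho=o(k)\), and it must respect \(\rho\le\mathcal L_p/2\). The choice \(\rho=\log k-\log\log k\) meets all three. Its residual loss \(\e^{O(k\log\log k/\log k)}\) is negligible against the \(\e^{-k\log\log k}\) gain.
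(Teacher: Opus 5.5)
Your proposal is correct and is essentially the paper's argument. You split the range and apply Lemma~\ref{lem:general-circle-bound} with $\rho=\mathcal L_p/2$ for small primes and $\rho\approx\log k$ for the rest (the paper splits at $\mathcal L_p=2r_k$ and takes $\rho=r_k$, which works equally well since $\e^{r_k}/r_k\ll k/\log k$), then use $\mathcal L_p^k\le(k/\log k)^k$ to gain $(\e/\log k)^k$. One small slip: in range (i) you only have $\rho<\log k$, so $\e^\rho/\rho\ll k/\log k$ rather than $\ll\sqrt k$, but the resulting factor $\e^{O(k/\log k)}$ is still absorbed into your bound $k!\,\e^{O(k)}$.
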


\begin{proof}
Split the sum at \(\mathcal L_p=2r_k\).  For
\(\mathcal L_p<2r_k\), apply
Lemma~\ref{lem:general-circle-bound} with
\(\rho=\mathcal L_p/2\).  Since
\(\e^{r_k}/r_k\ll k/\log k\) using~\eqref{eq:saddle-coordinate-estimates}, this gives
\[
  \sum_{\mathcal L_p<2r_k}
  T_p(1)\mathcal L_p^k|G_p^{(k)}(0)|
  \leq2\Xi\,k!2^k\exp\{O(k/\log k)\}.
\]
For \(2r_k\leq\mathcal L_p<{\frac{k}{\log k}}\), use Lemma~\ref{lem:general-circle-bound} with \(\rho=r_k\).  Then
\[
  \sum_{2r_k\leq\mathcal L_p<{\frac{k}{\log k}}}
  T_p(1)\mathcal L_p^k|G_p^{(k)}(0)|
  \leq
  2\Xi\,k!\left(\frac{{\frac{k}{\log k}}}{r_k}\right)^k
  \exp\{O({\frac{k}{\log k}})\}.
\]
On the other hand, Stirling's formula and~\eqref{eq:Qk-size} give
\begin{equation}\label{eq:total-saddle-scale}
  \log\bigl(\Gamma(k-1)2k!|Q_k|\bigr)
  =2k\log k-k\log\log k-2k
   +O\!\left(\frac{k}{\log k}\right).
\end{equation}
The logarithms of the preceding two bounds are, respectively,
\[
  k\log k+O(k)
  \quad\text{and}\quad
  2k\log k-2k\log\log k+O(k).
\]
Both are smaller than \eqref{eq:total-saddle-scale} by a quantity tending
to infinity faster than \(k\), which proves
\eqref{eq:smaller-primes}.
\end{proof}

We can now evaluate \(D_k\) itself.

\begin{proposition}
\label{prop:Dk-asymptotic}
As \(k\to\infty\),
\begin{equation}\label{eq:Dk-asymptotic}
  D_k
  =\e^{-\gamma}\Gamma(k-1)2k!|Q_k|
   \bigl(\cos\theta_k+o(1)\bigr).
\end{equation}
% In particular,
% \begin{equation}\label{eq:Dk-amplitude-size}
%   \log\bigl(\e^{-\gamma}\Gamma(k-1)2k!|Q_k|\bigr)
%   =k\log\left(\frac{k^2}{\e^2\log k}\right)
%    +O\!\left(\frac{k}{\log k}\right).
% \end{equation}
\end{proposition}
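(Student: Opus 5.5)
We start from the prime-sum representation
\[
  D_k=\sum_p T_p(1)\mathcal L_p^kG_p^{(k)}(0)
\]
and split it at \(\mathcal L_p=k/\log k\). By Lemma~\ref{lem:smaller-primes}, the primes with \(\mathcal L_p<k/\log k\) contribute \(o(\Gamma(k-1)2k!|Q_k|)\) in absolute value. This is already of the admissible size for the \(o(1)\) term in \eqref{eq:Dk-asymptotic}, since that term is measured against \(\e^{-\gamma}\Gamma(k-1)2k!|Q_k|\). So we only need the range \(\mathcal L_p\geq k/\log k\).

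On that range we write \(G_p^{(k)}(0)=f^{(k)}(0)+\bigl(G_p^{(k)}(0)-f^{(k)}(0)\bigr)\). Lemma~\ref{lem:derivative-approximation} bounds the difference by \(\epsilon'_k\,2k!|Q_k|\), with \(\epsilon'_k\to0\) uniformly in \(p\). Since the weights \(T_p(1)\mathcal L_p^k\) are positive, the difference terms contribute at most \(\epsilon'_k\,2k!|Q_k|\sum_pT_p(1)\mathcal L_p^k\). By the first part of Lemma~\ref{lem:weighted-prime-moment}, this is \(o(\Gamma(k-1)2k!|Q_k|)\).

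For the main term we factor out \(f^{(k)}(0)\) and get
\[
  f^{(k)}(0)\sum_{\mathcal L_p\geq k/\log k}T_p(1)\mathcal L_p^k.
\]
Both parts of Lemma~\ref{lem:weighted-prime-moment} show the restricted sum equals \((\e^{-\gamma}+o(1))\Gamma(k-1)\). By Proposition~\ref{prop:f-saddle}, \(f^{(k)}(0)=2k!|Q_k|(\cos\theta_k+o(1))\). Multiplying gives
\[
  \e^{-\gamma}\Gamma(k-1)2k!|Q_k|\bigl(\cos\theta_k+o(1)\bigr)(1+o(1))
  =\e^{-\gamma}\Gamma(k-1)2k!|Q_k|\bigl(\cos\theta_k+o(1)\bigr),
\]
using \(|\cos\theta_k|\leq1\).

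Adding the three pieces proves \eqref{eq:Dk-asymptotic}. The main point to watch is that each error is measured against the full amplitude \(\Gamma(k-1)2k!|Q_k|\) and not against the oscillating quantity \(\cos\theta_k\), which may be small. Because the statement only claims an additive \(o(1)\) inside the bracket, this is enough, and the rest is bookkeeping with the uniformity already provided by the cited lemmas.
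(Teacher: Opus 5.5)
Your proposal is correct and is essentially the paper's own proof. You split the prime sum at $\mathcal L_p=k/\log k$, discard the small primes by Lemma~\ref{lem:smaller-primes}, and on the large primes combine Lemma~\ref{lem:derivative-approximation} and the positivity of the weights with Lemma~\ref{lem:weighted-prime-moment} and Proposition~\ref{prop:f-saddle}; the only cosmetic difference, bounding the approximation error by the full moment $\sum_p T_p(1)\mathcal L_p^k$ rather than the restricted sum, is immaterial since both are $(\e^{-\gamma}+o(1))\Gamma(k-1)$.
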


\begin{proof}
Let
\[
  \Sigma_k:=\sum_{\mathcal L_p\geq {\frac{k}{\log k}}}
  T_p(1)\mathcal L_p^k.
\]
By Lemma~\ref{lem:weighted-prime-moment},
\[
  \Sigma_k=(\e^{-\gamma}+o(1))\Gamma(k-1).
\]
Lemma~\ref{lem:derivative-approximation}, the positivity of the weights,
and Proposition~\ref{prop:f-saddle} give
\begin{align*}
  \sum_{\mathcal L_p\geq {\frac{k}{\log k}}}
  T_p(1)\mathcal L_p^kG_p^{(k)}(0)
  &=\Sigma_k f^{(k)}(0)
    +o(\Sigma_k2k!|Q_k|)\\
  &=\e^{-\gamma}\Gamma(k-1)2k!|Q_k|
    \bigl(\cos\theta_k+o(1)\bigr).
\end{align*}
Lemma~\ref{lem:smaller-primes} shows that the complementary prime sum is
absorbed by the error term.  This proves \eqref{eq:Dk-asymptotic}.
%Equation \eqref{eq:Dk-amplitude-size} follows from Stirling's formula.
\end{proof}

\begin{proof}[Proof of Proposition~\ref{prop:Dk-lower}]
Let \(k_j^+\) and \(k_j^-\) be the sequences from
Lemma~\ref{lem:phase-variation}.  By
\eqref{eq:Dk-asymptotic},
\[
  (-1)^{k_j^+-1}D_{k_j^+}
  =\bigl(2+o(1)\bigr)k_j^+!|Q_{k_j^+}|
   \e^{-\gamma}\Gamma(k_j^+-1),
\]
whereas
\[
  (-1)^{k_j^--1}D_{k_j^-}
  =-\bigl(2+o(1)\bigr)k_j^-!|Q_{k_j^-}|
   \e^{-\gamma}\Gamma(k_j^--1).
\]
Stirling's formula and
\eqref{eq:Qk-size} imply, for all sufficiently large \(k\),
\[
  \e^{-\gamma}\Gamma(k-1)2k!|Q_k|
  \geq
  \left(
    \frac1{\e^{2+o(1)}}\frac{k^2}{\log k}
  \right)^k.
\]
The two asserted inequalities follow.
\end{proof}
% \section*{External Analytic Results}

% The quantitative prime number theorem used in
% Lemmas~\ref{lem:uniform-Gp} and \ref{lem:weighted-prime-moment} is
% Montgomery and Vaughan
% \cite[Theorem~6.9, equation~(6.13)]{MontgomeryVaughan2007}.
% The product estimate in Lemma~\ref{lem:weighted-prime-moment} is
% Mertens' third theorem~\cite{Mertens1874}.  The saddle-point argument in
% Proposition~\ref{prop:f-saddle} follows the classical treatment of
% conjugate saddles for exponential-integral phases developed by
% Hildebrand~\cite{Hildebrand1990} and Hildebrand and
% Tenenbaum~\cite{HildebrandTenenbaum1993}; see also the recent explicit
% treatment by Weingartner~\cite{Weingartner2026}.  General versions of
% the saddle-point estimates used there are given by
% Olver~\cite[Chapter~4]{Olver1997}.

% Cauchy's estimates, the simple pole of the Riemann zeta function at
% \(1\), the gamma integral, Stirling's formula, and the elementary
% properties of holomorphic logarithms are used in their standard forms.
\section{Improved Error Term}\label{sec:improved-error}

We now complement Theorem~\ref{thm:main} with an upper bound having the
same threshold constant, apart from the endpoint.  For \(z\geq2\), put
\[
  \mathcal P(z):=\prod_{\substack{q<z\\q\ \mathrm{prime}}}q,
  \qquad
  \mathcal V(z):=\prod_{\substack{q<z\\q\ \mathrm{prime}}}
    \left(1-\frac1q\right),
\]
and define
\[
  \Psi_{\mu^2}(x,z)
  :=\#\{d\leq x:\mu^2(d)=1,\ P^+(d)<z\}.
\]
Thus \(\mathcal V(p)=V(p)\) when \(p\) is prime.

We use the following interval form of the fundamental lemma. 
\begin{lemma}
\label{lem:interval-fundamental}
Let \(I\) be a bounded real interval of length \(H\), and let \(z\geq2\)
and \(u\geq1\).  Then, uniformly as \(u\to\infty\),
\begin{align*}
  \#\{a\in I\cap\mathbb Z:(a,\mathcal P(z))=1\}
  ={}&H\mathcal V(z)
  \left(1+O\left(u^{-u(1+o(1))}\right)\right) \notag\\
  &\quad+O\left(\Psi_{\mu^2}(z^u,z)\right).
  \label{eq:interval-fundamental}
\end{align*}
Here the \(o(1)\) depends only on \(u\).
\end{lemma}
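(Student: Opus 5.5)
This lemma is the fundamental lemma of sieve theory in interval form. I would prove it by applying a combinatorial (Brun-type or beta-sieve) weight of level $D=z^u$ to the sifted set $\mathcal A=I\cap\mathbb Z$ with sifting range $\mathcal P(z)$. The input is the standard fundamental lemma in its axiomatic form; see, e.g., Friedlander--Iwaniec, \emph{Opera de Cribro}, Lemma~6.3, or Halberstam--Richert. For a sifting problem of dimension $\kappa=1$, that lemma provides upper and lower sieve weights $\lambda_d^\pm$, supported on $d\mid\mathcal P(z)$ with $d\le D$ and satisfying $|\lambda_d^\pm|\le1$. These weights satisfy
\[
\sum_{d\mid (a,\mathcal P(z))}\lambda_d^-\le \1_{(a,\mathcal P(z))=1}\le\sum_{d\mid (a,\mathcal P(z))}\lambda_d^+ ,
\]
and, for the multiplicative density $g(d)=1/d$,
\[
\sum_{d}\lambda_d^\pm g(d)=\mathcal V(z)\bigl(1+O(u^{-u(1+o(1))})\bigr),
\]
where the $o(1)$ depends only on $u$. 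The exponent $u^{-u}$ rather than $e^{-u}$ is the sharp form of the fundamental lemma, obtained from the beta-sieve with $\beta$ large. I would cite it in this form and not reprove it.

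Next comes the local counting. For squarefree $d$,
\[
\#\{a\in I\cap\mathbb Z: d\mid a\}=\frac Hd+r_d, \qquad |r_d|\le1 .
\]
Summing the sieve inequalities over $a\in I$ and interchanging the sums gives
\[
\sum_d\lambda_d^-\Bigl(\frac Hd+r_d\Bigr)\le \#\{a\in I\cap\mathbb Z:(a,\mathcal P(z))=1\}\le\sum_d\lambda_d^+\Bigl(\frac Hd+r_d\Bigr).
\]
The main terms are $H\sum_d\lambda_d^\pm/d=H\mathcal V(z)(1+O(u^{-u(1+o(1))}))$. The remainder terms are bounded by
\[
\sum_{d\mid\mathcal P(z),\,d\le z^u}|\lambda_d^\pm||r_d|\le\#\{d\le z^u:\mu^2(d)=1,\ P^+(d)<z\}=\Psi_{\mu^2}(z^u,z).
\]
This bound holds because $d\mid\mathcal P(z)$ is equivalent to $d$ being squarefree with $P^+(d)<z$. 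Since the upper and lower bounds share the same form, the claimed two-sided estimate follows.

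The only delicate point is confirming that the cited fundamental lemma delivers the error $u^{-u(1+o(1))}$ uniformly in $z\ge2$, with the $o(1)$ depending only on $u$. This requires the standard dimension condition
\[
\prod_{w_1\le q<w_2}(1-1/q)^{-1}\le \frac{\log w_2}{\log w_1}\Bigl(1+\frac{K}{\log w_1}\Bigr),
\]
which Mertens' theorem supplies with an absolute constant $K$. I would take this from the Friedlander--Iwaniec formulation, whose error term is $O(u^{-u/2})$ in some versions. If only such a weaker rate were available, I would rescale $u$ (apply it with level $z^{u}$ but adjust the beta parameter), or note that the Opera de Cribro statement gives $e^{-u(\log u-\log\log 3u-\log\kappa-2)}$, which is $u^{-u(1+o(1))}$. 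This uniformity check is the main obstacle; everything else is bookkeeping.
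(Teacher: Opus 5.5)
Your proposal is correct and is the standard deduction that the paper outsources to Gorodetsky's Lemma~1.4 (case $k=1$), itself obtained from Friedlander--Iwaniec's Theorem~6.12. That deduction has the same ingredients as yours: beta-sieve weights with $|\lambda_d^\pm|\le1$ at level $z^u$, local remainders $|r_d|\le1$, and a remainder sum bounded by $\Psi_{\mu^2}(z^u,z)$, because $d\mid\mathcal P(z)$ means $d$ is squarefree with $P^+(d)<z$.

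The one point needing care is the citation. You need the sharp form with main-term error $\exp\{-s(\log s-\log\log 3s-O(1))\}$, uniform in $z$, since the commonly quoted $e^{-s}$ form is too weak. A black-box bound with a weaker rate such as $s^{-s/2}$ cannot be upgraded by rescaling $u$, because that raises the level and hence enlarges the $\Psi_{\mu^2}$ term. You would have to quote the sharp statement, or reprove it by letting $\beta$ grow with $s$.
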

\begin{proof}
This is the case \(k=1\) of \cite[Lemma~1.4]{Gorodetsky2024}, which is deduced there from \cite[Theorem~6.12]{FriedlanderIwaniec2010}.
\end{proof}

The squarefree smooth-number term in
Lemma~\ref{lem:interval-fundamental} can be estimated directly by
Rankin's method.

\begin{lemma}\label{lem:squarefree-smooth-bound}
Let \(x\geq z\geq3\), put
\[
  U:=\frac{\log x}{\log z},
\]
and suppose that \(\e^2\leq U<z\).  Then
\begin{equation}\label{eq:squarefree-smooth-bound}
  \Psi_{\mu^2}(x,z)
  \leq
  x\exp\left\{
    -U\log U
    +O\left(\frac{U}{\log U}+\log\log z\right)
  \right\}.
\end{equation}
The implied constant is absolute.
\end{lemma}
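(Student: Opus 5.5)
Rankin's method is the natural approach. For any $0<\sigma\leq 1$,
\[
  \Psi_{\mu^2}(x,z)\leq\sum_{\substack{\mu^2(d)=1\\P^+(d)<z}}\left(\frac{x}{d}\right)^{\sigma}
  = x^{\sigma}\prod_{q<z}\left(1+q^{-\sigma}\right)
  \leq x\exp\Bigl\{-(1-\sigma)\log x+\sum_{q<z}q^{-\sigma}\Bigr\}.
\]
Write $\sigma=1-\delta/\log z$ with $0\le\delta\le \log z$ to be chosen. Then $(1-\sigma)\log x=\delta U$. The task is therefore to bound $\sum_{q<z}q^{-1}e^{\delta\log q/\log z}$ from above, and then to choose $\delta\approx\log U$.

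For the prime sum, split $e^{\delta t}=1+(e^{\delta t}-1)$ with $t=\log q/\log z\in[0,1)$. The part coming from $1$ contributes $\sum_{q<z}1/q=\log\log z+O(1)$, which is the source of the $\log\log z$ term in the error. The remainder is $\sum_{q<z}(e^{\delta t}-1)/q$. Using only Chebyshev's bound $\vartheta(y)\ll y$ and partial summation (as in the proof of \eqref{eq:Pp-coarse-bound}), I would show
\[
  \sum_{q<z}\frac{e^{\delta\log q/\log z}-1}{q}\ll \int_0^1\frac{e^{\delta t}-1}{t}\dd t+O(1)\ll \frac{e^{\delta}}{\delta}
\]
for $\delta\ge1$. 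To get the sharp leading coefficient $U\log U$, however, I need the main part with constant $1$, namely $\int_0^1(e^{\delta t}-1)t^{-1}\dd t=\frac{e^\delta}{\delta}(1+O(1/\delta))$, up to an error $O(e^\delta/\delta^2)$. I would obtain this from Mertens' theorem in the form $\sum_{q\le y}\log q/q=\log y+O(1)$ via Stieltjes integration against $d\mathcal P$, exactly as in the proof of Lemma~\ref{lem:uniform-Gp}. Here only an $O(1)$ remainder is needed, and after integrating by parts it contributes $O(e^{\delta}/\log z\cdot\ldots)$, in fact $O(\delta e^\delta/\log z + 1)$ once the boundary term is included. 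Since $\delta\approx\log U<\log z$, this gives $O(U)$-type terms that need care. I expect this bookkeeping to be the main obstacle, and it is where I would check that the boundary term is $O(e^\delta/\delta^2)$ or $O(U/\log U)$.

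Finally, choose $\delta=\log U+\log\log U$, which is admissible because $U<z$ gives $\delta\le\log z$ for large $z$, and the small cases are absorbed into the $O$. Then $e^\delta/\delta=U\log U/(\log U+\log\log U)=U(1+O(\log\log U/\log U))$, and the exponent becomes
\[
  -\delta U+\frac{e^\delta}{\delta}\Bigl(1+O\bigl(\tfrac1\delta\bigr)\Bigr)+\log\log z+O(1)
  =-U\log U-U\log\log U+U+O\Bigl(\frac{U\log\log U}{\log U}\Bigr)+\log\log z.
\]
This is even better than required. If the $\log\log U$ refinement in $\delta$ produces awkward error terms, I would simply take $\delta=\log U$. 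Then $e^\delta/\delta=U/\log U$, the exponent is $-U\log U+O(U/\log U)+O(\log\log z)$, and this is exactly \eqref{eq:squarefree-smooth-bound}. The crude bound $\int_0^1(e^{\delta t}-1)t^{-1}\dd t\ll e^\delta/\delta$ from Chebyshev suffices here, which removes the main obstacle. The hypothesis $U\ge e^2$ ensures $\delta\ge2$, so the $\delta\ge1$ estimates apply, and $U<z$ ensures $\sigma>0$.
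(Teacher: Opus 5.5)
Your final argument is correct and is essentially the paper's proof. It uses Rankin's trick with $\sigma=1-\log U/\log z$ (i.e.\ $\delta=\log U$), the bound $\log\prod_{q<z}(1+q^{-\sigma})\le\sum_{q<z}q^{-\sigma}$, and Chebyshev plus partial summation to get $O(\log\log z+U/\log U)$. The only difference is cosmetic: the paper splits $\int_2^z t^{-1+\eta}(\log t)^{-1}\dd t$ at $t=\e^{1/\eta}$, which does the same job as your split $\e^{\delta t}=1+(\e^{\delta t}-1)$.

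The detour through sharp constants and $\delta=\log U+\log\log U$ is unnecessary, because the whole main term $-U\log U$ comes from $x^\sigma$. That alternative choice is also not guaranteed to satisfy $\delta\le\log z$ when only $U<z$ is assumed, so the fallback $\delta=\log U$ is the version to keep.
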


\begin{proof}
Set
\[
  \eta:=\frac{\log U}{\log z},
  \qquad
  \sigma:=1-\eta.
\]
The hypotheses imply \(0<\sigma<1\).  Rankin's argument gives
\begin{align*}
  \Psi_{\mu^2}(x,z)
  \leq
  x^\sigma
  \sum_{\substack{d\geq1\\P^+(d)<z}}
  \frac{\mu^2(d)}{d^\sigma} =x^\sigma
  \prod_{q<z}\left(1+q^{-\sigma}\right).
\end{align*}
Since \(x=z^U\), we have
\[
  x^\sigma=x\exp\{-U\log U\}.
\]
Moreover,
\[
  \log\prod_{q<z}\left(1+q^{-\sigma}\right)
  \leq \sum_{q<z}q^{-1+\eta}.
\]
The standard Chebyshev bound \(\pi(t)\ll t/\log t\) and partial
summation give
\begin{align*}
  \sum_{q<z}q^{-1+\eta}\ll \frac{z^\eta}{\log z}
       +\int_2^z\frac{t^{-1+\eta}}{\log t}\dd t \ll \log\log z+\frac{U}{\log U}.
\end{align*}
For the last estimate, split the integral at \(t=e^{1/\eta}\) and, in the
second part, make the change of variables \(w=\eta \log t\).  Combining these
estimates proves \eqref{eq:squarefree-smooth-bound}.
\end{proof}

We next record the elementary estimate needed for the large-prime tail.
Recall the sets \(M_p(n)\) from
\eqref{eq:least-prime-decomposition}. 
With $I_p$ as defined in~\eqref{eq:Mp-rough-interval}, set
\begin{equation}\label{eq:Hp-length}
  H_p:=|I_p|
  =\frac{n-p-1}{p(p+1)}
  =\frac{n}{p(p+1)}-\frac1p.
\end{equation}

\begin{lemma}\label{lem:improved-large-prime-tail}
Uniformly for \(3\leq Y\leq\sqrt n\),
\[
  \sum_{\substack{p>Y\\p<n}}|M_p(n)|
  \ll \frac nY+\sqrt n.
\]
\end{lemma}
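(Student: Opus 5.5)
We bound each $|M_p(n)|$ trivially by the number of integers in $I_p$, with no use of the roughness condition beyond the fact that $|M_p(n)|=0$ once $I_p$ contains no integer. By \eqref{eq:Mp-rough-interval} and \eqref{eq:Hp-length}, $I_p$ is a half-open interval of length $H_p\le n/p^2$, so
\[
  |M_p(n)|\le H_p+1\le \frac{n}{p^2}+1 .
\]
Summing the first part over all primes $p>Y$ gives $\sum_{p>Y}n/p^2\ll n/(Y\log Y)\le n/Y$. The whole argument then reduces to controlling the ``$+1$'' terms. Summed naively over all $p<n$, they would give $\pi(n)$, which is far too large, so we must split the range.

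For $Y<p\le\sqrt n$ we keep the bound $H_p+1$. The $+1$ terms contribute at most $\pi(\sqrt n)\le\sqrt n$, which is acceptable.

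For $\sqrt n<p<n$ we use the structure of $M_p(n)$ rather than the interval length. An element $m=pa\in M_p(n)$ has $P^-(m)\ge p$ and $m<n<p^2$, so $m$ cannot have a second prime factor that is at least $p$. Hence $a=1$ and $m=p$. Membership of $m=p$ in the window $[\,n-\lfloor (n-p)/(p+1)\rfloor,\ n-1\,]$ requires $p\ge n-(n-p)/(p+1)$, that is $p(p+1)\ge n(p+1)-(n-p)$, which simplifies to $p^2\ge np$, i.e.\ $p\ge n$. This contradicts $p<n$, so $M_p(n)=\emptyset$ for $\sqrt n<p<n$. Equivalently, $I_p=(n/(p+1),(n-1)/p]$ lies inside $(0,\sqrt n)$, so any integer $a$ in it has $a<p$, and $P^-(a)\ge p$ then forces $a=1$. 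But $1\in I_p$ would require $n/(p+1)<1$, which is impossible for $p<n$.

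Combining the two ranges gives
\[
  \sum_{\substack{p>Y\\ p<n}}|M_p(n)|\ll \frac nY+\sqrt n
\]
uniformly for $3\le Y\le\sqrt n$. The only delicate point is the endpoint check that $a=1$ is excluded for large $p$. If that check failed by a boundary case, it would cost only $O(1)$ primes, which is still absorbed into $\sqrt n$.
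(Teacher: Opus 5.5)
Your proof is correct and follows essentially the same route as the paper. Split at $\sqrt n$; show $M_p(n)=\varnothing$ for $\sqrt n<p<n$ because any integer $a\in I_p$ would satisfy $1<a<p$ (your ``equivalently'' version is exactly the paper's reasoning); and use $|M_p(n)|\le H_p+1\ll n/p^2+1$ for $Y<p\le\sqrt n$.

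One small correction to your opening framing. For $\sqrt n<p<n$ the interval $I_p$ can contain integers: for $n=100$, $p=11$ one has $9\in I_{11}$. So the vanishing of $M_p(n)$ in that range genuinely depends on the roughness condition $P^-(a)\ge p$, which your argument in fact uses.
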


\begin{proof}
Suppose first that \(p>\sqrt n\).  If \(pa\in M_p(n)\), then
\(a<n/p<\sqrt n<p\).  By~\eqref{eq:Mp-rough-interval}, we have \(a> n/(p+1)\geq 1\) provided $p<n$. Therefore, \(P^-(a) < p\), a contradiction.
Thus \(M_p(n)=\varnothing\) for \(p>\sqrt n\).

For \(Y<p\leq\sqrt n\), equation \eqref{eq:Mp-rough-interval} gives
\[
  |M_p(n)|\leq H_p+1\ll\frac{n}{p^2}+1.
\]
Consequently,
\[
  \sum_{\substack{p>Y\\p<n}}|M_p(n)|
  \ll n\sum_{m>Y}\frac1{m^2}+\sqrt n
  \ll\frac nY+\sqrt n.
\]
\end{proof}

We are now ready to prove Theorem~\ref{thm:improved-error}.
\begin{proof}[Proof of Theorem~\ref{thm:improved-error}]
Recall from~\eqref{eq:least-prime-decomposition} that
\begin{align*}
    s_1(n) = 1+\sum_{p<n}|M_p(n)|.
\end{align*}
Put
\[
  Y:=\exp(2\sqrt{\log n\log\log n}).
\]
% For all sufficiently large \(n\), we have \(3<2\log n<Y<\sqrt n\).
We split the prime sum into the ranges $p\leq 2\log n$, $2\log n<p\leq Y$, and $p>Y$.

We first dispose of the primes \(p\leq2\log n\).  Inclusion-exclusion in \eqref{eq:Mp-rough-interval} gives
\begin{align*}
  |M_p(n)|
  =\sum_{d\mid\mathcal P(p)}
    \mu(d)\#\{a\in I_p\cap\mathbb Z:d\mid a\} =H_pV(p)+O\left(2^{\pi(p)}\right).
\end{align*}
The Chebyshev estimate \(\pi(x)\ll x/\log x\) therefore yields
\begin{equation}\label{eq:small-p-total}
  \sum_{p\leq2\log n}
  \left||M_p(n)|-H_pV(p)\right|
  \leq \exp\left\{O\left(\frac {\log n}{\log\log n}\right)\right\}.
\end{equation}

Now let \(2\log n<p\leq Y\), and set
\[
  u_p:=\frac{\log H_p}{\log p}.
\]
Uniformly in this range,
\begin{equation}\label{eq:up-uniform}
  \log H_p=\log n-2\log p+O(1),
  \qquad
  u_p=(1+o(1))\frac {\log n}{\log p},
\end{equation}
and hence
\[
  \min_{2\log n<p\leq Y}u_p
  \gg\sqrt{\frac {\log n}{\log\log n}}\longrightarrow\infty.
\]
We also have \(u_p<p\) for all sufficiently large \(n\).  Apply
Lemma~\ref{lem:interval-fundamental} to \(I_p\), with \(z=p\) and
\(u=u_p\), so that \(p^{u_p}=H_p\).  Lemma
\ref{lem:squarefree-smooth-bound}, together with \(V(p)\leq1\), then
gives, uniformly for \(2\log n<p\leq Y\),
\begin{equation}\label{eq:local-sieve-asymptotic}
  |M_p(n)|
  =H_pV(p)
  +O\left(
    H_p\exp\{-(1-o(1))u_p\log u_p\}
  \right).
\end{equation}
Indeed,
\[
  \frac{u_p/\log u_p+\log\log p}{u_p\log u_p}=o(1)
\]
uniformly throughout this range.

It remains to sum the errors in
\eqref{eq:local-sieve-asymptotic}.  Group the primes into blocks
\[
  \e^j\leq p<\e^{j+1},
\]
and put \(t=j+1\).  By \eqref{eq:up-uniform}, uniformly over the blocks
under consideration,
\[
  u_p\log u_p
  \geq(1-o(1))\frac {\log n}t\log\frac {\log n}t.
\]
Since \(H_p\leq n/p^2\) and
\[
  \sum_{\e^j\leq p<\e^{j+1}}\frac1{p^2}\ll\e^{-j},
\]
the contribution of the \(j\)-th block is at most
\[
  n\exp\left\{
    -j-(1-o(1))\frac {\log n}t\log\frac {\log n}t
  \right\}.
\]
Here \(t\leq2\sqrt{\log n\log\log n}+1\), and therefore
\[
  \log\frac {\log n}t
  \geq \left(\frac12-o(1)\right)\log\log n.
\]
The arithmetic--geometric mean inequality now gives
\begin{align*}
  t+(1-o(1))\frac {\log n}t\log\frac {\log n}t
  &\geq
  t+\left(\frac12-o(1)\right)\frac{\log n\log\log n}{t} \\
  &\geq (\sqrt2-o(1))\sqrt{\log n\log\log n}.
\end{align*}
There are only \(O(\sqrt{\log n\log\log n})\) blocks.  Consequently,
\begin{equation*}\label{eq:summed-sieve-error}
  \sum_{2\log n<p\leq Y}
  \left||M_p(n)|-H_pV(p)\right|
  \ll n\exp\{-(\sqrt2-o(1))\sqrt{\log n\log\log n}\}.
\end{equation*}
The contribution in \eqref{eq:small-p-total} is negligible compared to the right-hand side above,
so
\begin{equation}\label{eq:small-prime-total}
  \sum_{p\leq Y}|M_p(n)|
  =\sum_{p\leq Y}H_pV(p)
  +O\left(n\exp\{-(\sqrt2-o(1))\sqrt{\log n\log\log n}\}\right).
\end{equation}

The exact value of \(H_p\) in \eqref{eq:Hp-length} gives
\begin{align*}
  \sum_{p\leq Y}H_pV(p)
  &=n\sum_{p\leq Y}\frac{V(p)}{p(p+1)}
    -\sum_{p\leq Y}\frac{V(p)}p \\
  &=\Xi n+O\left(
    n\sum_{p>Y}\frac1{p^2}
    +\sum_{m\leq Y}\frac1m
  \right) \\
  &=\Xi n+O\left(\frac nY+\log Y\right).
\end{align*}
Substituting $Y$, we obtain
\begin{align}\label{eq: Main Term contri till Y}
    \sum_{p\leq Y}H_pV(p)=\Xi n+O\left(
    n\exp\left\{
      -(\sqrt2-o(1))\sqrt{\log n\log\log n}
    \right\}
  \right).
\end{align}
Furthermore, Lemma~\ref{lem:improved-large-prime-tail} gives
\[
  \sum_{\substack{p>Y\\p<n}}|M_p(n)|
  \ll\frac nY+\sqrt n\ll 
    n\exp\left\{
      -(\sqrt2-o(1))\sqrt{\log n\log\log n}
    \right\}
 .
\]
% Since \(Y=\e^{2B}\), all of
% \[
%   \frac nY,
%   \qquad \sqrt n,
%   \qquad \log Y,
%   \qquad \exp\left\{O\left(\frac {\log n}{\log\log n}\right)\right\}
% \]
% are absorbed by \(n\exp\{-(\sqrt2-o(1))\sqrt{\log n\log\log n}\}\).  
Finally,
\eqref{eq:small-prime-total} and~\eqref{eq: Main Term contri till Y} yield
\[
  s_1(n)
  =\Xi n+O\left(
    n\exp\left\{
      -(\sqrt2-o(1))\sqrt{\log n\log\log n}
    \right\}
  \right).
\]
\end{proof}

\begin{remark}
Together with Theorem~\ref{thm:main}, Theorem~\ref{thm:improved-error}
shows that \(\sqrt2\) is the sharp threshold constant away from the
endpoint.  The present argument does not decide whether the bound with
constant exactly \(\sqrt2\) holds.
\end{remark}

\begingroup
\raggedright

\endgroup


\begin{thebibliography}{99}

\bibitem{AbramowitzStegun1964}
M.~Abramowitz and I.~A. Stegun, eds.,
\emph{Handbook of Mathematical Functions with Formulas, Graphs, and
Mathematical Tables},
National Bureau of Standards Applied Mathematics Series, vol.~55,
U.S. Government Printing Office, Washington, DC, 1964.

\bibitem{BaraskarVukusic2026}
O.~Baraskar and I.~Vukusic,
\emph{Bounds for Sets of Remainders},
Journal of Integer Sequences \textbf{29} (2026), Article 26.1.4.
\url{https://cs.uwaterloo.ca/journals/JIS/VOL29/Vukusic/vukusic2.html}.

\bibitem{BaraskarGokhaleJainKiezun2026}
O.~Baraskar, P.~Gokhale, S.~Jain and A.~Kie\.zun,
\emph{Asymptotic Behavior of Iterated Sets of Remainders},
arXiv:2609.24412 [math.NT], 2026. \url{https://arxiv.org/abs/2609.24412}.

\bibitem{ChasePandey2022}
Z.~Chase and M.~Pandey,
\emph{On the Length of Pierce Expansions},
arXiv:2211.08374 [math.NT], 2022.
\url{https://arxiv.org/abs/2211.08374}.

\bibitem{ErdosShallit1991}
P.~Erd\H{o}s and J.~O. Shallit,
\emph{New Bounds on the Length of Finite Pierce and Engel Series},
Journal de Th\'eorie des Nombres de Bordeaux \textbf{3} (1991), no.~1,
43--53.
\url{https://doi.org/10.5802/jtnb.41}.

\bibitem{FriedlanderIwaniec2010}
J.~Friedlander and H.~Iwaniec,
\emph{Opera de Cribro},
American Mathematical Society Colloquium Publications, vol.~57,
American Mathematical Society, Providence, RI, 2010.

\bibitem{FGHM}
J.~B. Friedlander, A.~Granville, A.~Hildebrand and H.~Maier,
\emph{Oscillation theorems for primes in arithmetic progressions and for
sifting functions},
J. Amer. Math. Soc. \textbf{4} (1991), no.~1, 25--86.

\bibitem{Gorodetsky2024}
O.~Gorodetsky,
\emph{The Variance of Integers Without Small Prime Factors in Short
Intervals},
Mathematische Zeitschrift \textbf{308} (2024), no.~4, Paper No.~59.
\url{https://doi.org/10.1007/s00209-024-03601-w}.

\bibitem{GorodetskySmooth}
O.~Gorodetsky,
\emph{Smooth numbers and the Dickman $\rho$ function},
J. Anal. Math. \textbf{151} (2023), 139--169.

% \bibitem{Hildebrand1990}
% A.~Hildebrand,
% \emph{The Asymptotic Behavior of the Solutions of a Class of
% Differential-Difference Equations},
% Journal of the London Mathematical Society (2) \textbf{42} (1990),
% 11--31.
% \url{https://doi.org/10.1112/jlms/s2-42.1.11}.

\bibitem{HildebrandTenenbaum}
A.~Hildebrand and G.~Tenenbaum,
\emph{On integers free of large prime factors},
Trans. Amer. Math. Soc. \textbf{296} (1986), no.~1, 265--290.

% \bibitem{HildebrandTenenbaum1993}
% A.~Hildebrand and G.~Tenenbaum,
% \emph{On a Class of Differential-Difference Equations Arising in Number
% Theory},
% Journal d'Analyse Math\'ematique \textbf{61} (1993), 145--179.
% \url{https://doi.org/10.1007/BF02788841}.

% \bibitem{Mertens1874}
% F.~Mertens,
% \emph{Ein Beitrag zur analytischen Zahlentheorie},
% Journal f\"ur die reine und angewandte Mathematik \textbf{78} (1874),
% 46--62.
% \url{https://doi.org/10.1515/crll.1874.78.46}.

\bibitem{MontgomeryVaughan2007}
H.~L. Montgomery and R.~C. Vaughan,
\emph{Multiplicative Number Theory I: Classical Theory},
Cambridge Studies in Advanced Mathematics, vol.~97,
Cambridge University Press, Cambridge, 2007.
\url{https://doi.org/10.1017/CBO9780511618314}.
\bibitem{oeis}
OEIS Foundation Inc.,
\emph{The On-Line Encyclopedia of Integer Sequences},
\url{https://oeis.org},
2026.
% \bibitem{Olver1997}
% F.~W.~J. Olver,
% \emph{Asymptotics and Special Functions},
% A K Peters, Wellesley, MA, 1997.
% \url{https://doi.org/10.1201/9781439864548}.

% \bibitem{Weingartner2026}
% A.~Weingartner,
% \emph{Explicit Bounds for Buchstab's Function},
% arXiv:2607.21883 [math.NT], 2026.
% \url{https://arxiv.org/abs/2607.21883}.

\end{thebibliography}
\end{document}